\definecolor{lightgrey}{rgb}{0.85,0.85,0.85}
\newcommand{\bv}{\begin{array}}
\newtheorem{thm}{Theorem}
\newtheorem{dfn}{Definition}
\newtheorem{lem}{Lemma}
\newtheorem{ex}{Example}
\newenvironment{sketch}{{\noindent \em Proof sketch:} \rm }{\hfill $\Box$ }
\newenvironment{proof}{{\noindent \em Proof:} \rm }{\hfill $\Box$ }
\newtheorem{claim}{Claim}
\newcommand{\ma}{\mathcal A}
\newcommand{\wmg}{\text{WMG}}
\newcommand{\ml}{\mathcal L}
\newcommand{\mb}{\mathcal B}
\newcommand{\ms}{\mathcal S}
\newcommand{\mcp}{\mathcal P}
\newcommand{\ra}{\rightarrow}
\newcommand{\Omit}[1]{}
\newcommand{\calT}{\mathcal T}
\newcommand{\mm}{\mathcal M}
\newcommand{\others}{\text{others}}
\newcommand{\kt}{\text{KT}}
\newcommand{\ratio}{\text{Ratio}}
\newcommand{\atopr}{L_{a\succ\text{others}}}
\newcommand{\atopb}{R_{a\succ\text{others}}}
\newcommand{\size}{\text{Size}}
\newcommand{\power}{\text{Power}}
\newcommand{\lr}{\text{LR}}
\newcommand{\supp}{\text{Spt}}
\newcommand{\ext}{\text{Ext}}
\newcommand{\borda}{\text{Borda}}
\newcommand{\mallows}{\mm^\text{Ma}}
\newcommand{\condorcet}{\mm^\text{Co}}
\title{Optimal Statistical Hypothesis Testing for Social Choice}
\author{\bf Lirong Xia \thanks{This work is supported by NSF \#1453542 and \#1716333, and ONR \#N00014-17-1-2621. We thank all reviewers for helpful suggestions.} 
 \\ Rensselaer Polytechnic Institute, RPI, Troy NY 12180, USA\\ {\tt xial@cs.rpi.edu}}
\begin{document}
%

\maketitle


\begin{abstract}
We address the following question in this paper: ``{\em What are the {most} robust statistical methods for social choice?}'' By leveraging the theory of uniformly least favorable distributions in the Neyman-Pearson framework to finite models and randomized tests, we characterize {\em uniformly most powerful (UMP) tests}, which is a well-accepted statistical optimality w.r.t.~robustness, for testing whether a given alternative is the winner under Mallows' model and under Condorcet's model, respectively. 
\end{abstract}

\section{\MakeUppercase{Introduction}}

Suppose a group of seven friends want to choose restaurant $a$, $b$, or $c$ for dinner. Each person uses a ranking over the restaurants to represent his or her preferences. Three people rank $a\succ b\succ c$, three people rank $b\succ c\succ a$, and one people ranks $a\succ c\succ b$. Suppose their preferences are correlated and are based on their perception of the quality of the restaurants---the higher the quality of a restaurant, the more likely a person will rank it high. Which restaurant should they choose? 

Similar problems exist in a wide range of group decision-making scenarios such as political elections~\citep{Condorcet1785:Essai}, meta-search engines~\citep{Dwork01:Rank}, recommender systems~\citep{Ghosh99:Voting}, and crowdsourcing~\citep{Mao13:Better}. Such problems at the intersection of statistics and {social choice} can be dated back to {\em Condorcet's Jury Theorem} in the 18th century~\citep{Condorcet1785:Essai}. The Jury Theorem states that when there are two alternatives, assuming that the votes are generated i.i.d.~from a simple statistical model, then the outcome of majority voting converges to the ground truth as the number of voters goes to infinity. 

However, the Jury Theorem does not identify the {\em optimal} decision-making rule, especially when there are three alternatives or more. From a statistical point of view, defining the optimality measure is highly nontrivial and controversial. If we use likelihood of a parameter as the measure, then we may pursue the {\em likelihoodist} approach. If we view the ground truth parameter as a random variable, and use expected loss w.r.t.~the posterior distribution over the parameters as the measure, then we may pursue the {\em Bayesian} approach. If we believe that the ground truth is deterministic and unknown, and want to measure the performance of a given rule, then we may pursue the {\em frequentist} approach.\footnote{The three approaches differ in philosophy of probability and measure of rules. The same rule, for example the MLE (MAP with uniform prior for Bayesians), might be used in all three approaches due to its optimality w.r.t.~the three measures under certain conditions.}  At a high level, the frequentist approach tries to measure and design the most {\em robust} rule, as \citet{Efron2005:Bayesians} noted: 
``{\em a frequentist is a Bayesian trying to do well, or at least not too badly, against any possible prior distribution}''.

Most previous work in the literature of statistical approaches to social choice pursued either an  MLE approach or an Bayesian approach.
We are not aware of the application of a widely-applied modern frequentists' decision-making technique---{optimal statistical hypothesis testing}---to social choice. In the celebrated {\em Neyman-Pearson framework} of statistical hypothesis testing (see, e.g.~the book by~\citet{Lehmann08:Testing}), a statistical model is given and the decision-maker first chooses two non-overlapping subsets of ground truth parameters $H_0,H_1$, where $H_0$ is called the {\em null hypothesis} and $H_1$ is called the {\em alternative hypothesis}. Then the decision-maker designs a test for $H_0$ vs.~$H_1$, in the form of a {\em critical function} $f$, to make a binary decision in $\{0,1\}$ for each observed data. Here $1$ means that $H_0$ should be rejected and $0$ means that there is a lack of evidence to reject $H_0$. We note that the role of $H_0$ and $H_1$ are not the same, namely a $A$ vs.~$B$ test is different from a $B$ vs.~$A$ test. 

While many generic hypothesis testing methods can be applied, such as the {\em generalized likelihood ratio tests}~\citep{Hoeffding65:Asymptotically,Zeitouni92:When}, how to make an optimal social choice w.r.t.~frequentists' measure is still an open question.



\begin{paragraph}{\bf Our Contributions.}
We answer the question of optimal hypothesis testing for social choice by characterizing {\em uniformly most powerful (UMP)} tests for various combinations of $H_0$ and $H_1$ for winner determination under two popular models for rank data: Mallows' model and Condorcet's model.  UMP is a strong notion of optimality for hypothesis testing. 
A test $f$ is evaluated by two criteria: its {\em size} (or {\em level of significance}), which is its worst-case probability to wrongly reject $H_0$, and its {\em power}, which is its probability to correctly reject $H_0$. The power of a test is evaluated at each $h_1\in H_1$. A level-$\alpha$ test $f$ is a UMP test, if it has the highest power at every $h_1\in H_1$ among all tests whose sizes are no more than $\alpha$.

We focus on two types of tests for a given alternative $a$: the {\em non-winner tests}, where $H_0$ represents $a$ being the winner\footnote{This setting is called a ``non-winner test'' because when $H_0$ is rejected, $a$ should not be selected as the winner.}; and the {\em winner tests}, where $H_1$ represents $a$ being the winner. Our main results are summarized in Table~\ref{tab:main}.
\end{paragraph}

\begin{table}[h]
\centering
\begin{tabular}{|c|@{\small}c|@{\small}c|}
\hline
& \begin{tabular}{c}Non-winner\\ ($H_0=\{ a$ wins$\}$) \end{tabular} &\begin{tabular}{c}Winner\\ ($H_1=\{ a$ wins$\}$)\end{tabular}\\
\hline Mallows & Y\&N (Thm.~\ref{thm:winnercvsm},~\ref{thm:tvcm}) & Y\&N  (Thm.~\ref{thm:winnerbottomtopm},\ref{thm:mallowswinnernonex},\ref{thm:mallowswinnerexist}) \\

\hline Condorcet & Y\&N (Thm.~\ref{thm:winnercvsc},~\ref{thm:tvcc}) & Y  (Thm.~\ref{thm:winnerotherstopc}) \\
\hline
\end{tabular}
\caption{\small UMP tests for Mallows' model and Condorcet's model. ``Y'' in Condorcet-Winner means that for any $0<\alpha<1$, there exists a level-$\alpha$ UMP winner test for $\overline H_1$ vs.~$H_1$. ``Y\&N'' means that for some $\alpha$, no level-$\alpha$ UMP test exists for $H_0=\overline H_1$; but a UMP test  exists for all levels for some natural special cases.  \label{tab:main}}
\end{table}



For example, ``Y\&N'' in Mallows-Non-winner  in Table~\ref{tab:main}  means that for some $\alpha$, no level-$\alpha$ UMP test exists for $H_0$ vs.~$\overline H_0$, where $H_0$ consists of rankings where a given  alternative $a$ is ranked at the top. On the other hand, for some $H_1$, a level-$\alpha$ UMP test  exists for all $0<\alpha<1$. In fact, Theorem~\ref{thm:tvcm} characterizes all such $H_1$'s.

In particular, we obtained a complete characterizations of $H_1$ for which UMP non-winner tests (that is, when $H_0$ models ``$a$ wins'') exist, under Mallows' model (Theorem~\ref{thm:tvcm}) and under Condorcet's model (Theorem~\ref{thm:tvcc}).
Technically, to obtain the characterizations, we leverage the theory of {\em uniformly least favorable distributions} to finite models and randomized tests (Lemma~\ref{lem:dom}, \ref{lem:ext}, \ref{lem:ind}, \ref{lem:extdomain}). These lemmas generalize the key theorems by Reinhardt~\citet{Reinhardt1961:The-Use-of-Least} that only hold for continuous parameter space, and they might be of independent interest. 

\vspace{2mm}
{\noindent \bf Significance of results.} Our results provide the first theoretical characterization of robust social choice w.r.t.~frequentists' measure. Practically, the UMP winner tests in the Condorcet-Winner column can be used for testing whether a given alternative $a$ is a winner by appropriately setting $H_0$ while fixing $H_1$ to represent ``$a$ wins''. 

\begin{paragraph}{\bf Proof techniques.}
This paper focuses on {\em composite vs.~composite} tests, where both $H_0$ and $H_1$ contain more than one element. Many results in this paper are proved by applying Lemma~\ref{lem:hvs} (Theorem 3.8.1 and Corollary 3.8.1 in~\citep{Lehmann08:Testing}), which offers necessary and sufficient conditions for composite vs.~simple tests. However, applying Lemma~\ref{lem:hvs} is more challenging than it appears---the key is to come up with a {\em uniformly least favorable distribution} that satisfies the conditions in Lemma~\ref{lem:hvs} {\em for all elements in $H_1$}, and such distribution is not guaranteed to exist. As we show later in the paper, such distributions indeed exist for non-winner tests for Mallows' model and Condorcet's model respectively, and it is non-trivial to verify that they satisfy conditions in Lemma~\ref{lem:hvs}. In fact, to this end, we proved new properties (Lemma~\ref{lem:CC'} and Lemma~\ref{lem:bordadom} in the appendix) about Mallows' model and new general theorems (Lemma~\ref{lem:ind} and~\ref{lem:extdomain}) that can be applied to Condorcet's model, which might be of independent interest. 
\end{paragraph}

\begin{paragraph}{\bf Related Work and Discussions.}
\citet{Marden95:Analyzing} applied the Neyman-Pearson Lemma (Lemma~\ref{lem:NP}) for simple vs.~simple tests under Mallows, as illustrated in Example~\ref{ex:mallows-simple-vs-simple}. Most previous work in statistical approaches in social choice focused on extending the Condorcet Jury Theorem and proving asymptotic results~\citep{Gerlinga05:Information,Nitzan17:Collective}. Previous work focused on using commonly-studied voting rules designed for elections~\citep{Conitzer05:Common,Caragiannis2016:When}, maximum likelihood estimators~\citep{Conitzer05:Common,Xia11:Maximum}, or Bayesian estimators~\citep{Young88:Condorcet,Procaccia12:Maximum,Pivato13:Voting,Elkind14:Electing,Azari14:Statistical,Xia2016:Bayesian}. We are not aware of a previous work on UMP tests for deciding whether a given alternative wins or not in social choice context. 


Compared to previous MLE and Bayesian approaches to social choice, optimal rules characterized in this paper are more robust because it offers the best worst-case guarantee against an adversary who controls the ground truth parameter. As in the general Bayesian vs.~Frequentists debate, this does not mean that one approach is better than another, because the measures  of performance are different. 

\end{paragraph}
\section{\MakeUppercase{Preliminaries}}

Let $\ma=\{a_1,\ldots,a_m\}$ denote a set of $m\ge 2$ {\em alternatives} and let $\ml(\ma)$ denote the set of all linear orders over $\ma$. Let $n$ denote the number of agents. Each agent's preferences are represented by a linear order in $\ml(\ma)$. We often use $V=[a\succ b\succ \cdots]$ to denote a ranking, and write $a\succ_V b$ if $a$ is preferred to $b$ in $V$. Let $P_n$ denote the collection of $n$ agents' votes, called an {\em ($n$-)profile}. For any profile $P$ and any pair of alternatives $a,b$, we let $P(a\succ b)$ denote the number of votes in $P$ where $a$ is preferred to $b$. 

The {\em weighted majority graph (WMG)} of $P$, denoted by $\wmg(P)$, is a directed weighted graph where the weight $w_P(a\succ b)$ on any edge $a\ra b$ is $w_P(a\succ b)=P(a\succ b)-P(b\succ a)$. By definition $w_P(a\succ b)=-w_P(b\succ a)$. For example, the WMG of the profile $P_7$ of seven linear orders mentioned in the beginning of Introduction has weights $w_{P_7}(a\succ b)= w_{P_7}(a\succ c)=1$ and $w_{P_7}(b\succ c)=5$.

\begin{paragraph}{\bf Statistical Models for Rank Data.}
A statistical model $\mm=(\ms,\Theta,\vec\pi)$ has three parts: the {\em sample space} $\ms$, which is composed of all possible data; the {\em parameter space} $\Theta$; and the probability distributions $\vec \pi=\{\pi_\theta:\theta\in \Theta\}$. If both $\ms$ and $\Theta$ contain finitely many elements, then we call $\mm$ a {\em finite model}. 
 For any pair of linear orders $V,W$ in $\ml(\ma)$, let $\kt(V,W)$ denote the {\em Kendall-tau distance},  which is the total number of pairwise disagreements between $V$ and $W$. Formally, 
$$
\kt(V,W) = \#\left\{\begin{array}{r}
\{a,b\}\subseteq \ma: [a\succ_V b\text{ and }b\succ_W a] \\
\text{ or } [b\succ_V a\text{ and }a\succ_W b]
\end{array}\right\}
$$


\begin{dfn}\label{dfn:modelmallows} {\bf (Mallows' model with fixed dispersion~\citep{Mallows57:Non-null})} Given the dispersion $0<\varphi<1$,  {\em Mallows' model} is denoted by $\mallows=(\ml(\ma)^n,\ml(\ma),\vec\pi)$, where $n$ linear orders are i.i.d.~generated, the parameter space is $\ml(\ma)$ and for any $V,W\in\ml(\ma)$,  $\pi_W(V)=\frac{1}{Z}\varphi^{\kt(V, W)}$, where $Z$ is the normalization factor.
\end{dfn}

Condorcet's model differs from Mallows' model by allowing ties in the ground truth and in data. Formally, let $\mb(\ma)$ denote the set of all irreflexive, antisymmetric, and
total {\em binary relations} over $\ma$. We have $\ml(\ma)\subseteq \mb(\ma)$ and the
Kendall-tau distance is extended to  $\mb(\ma)$ by counting the number of pairwise disagreements. 

\begin{dfn}\label{dfn:modelcondorcet}{\bf (Condorcet's model for binary relations with fixed dispersion)} Given the dispersion $0<\varphi<1$, {\em Condorcet's model} is denoted by $\condorcet=(\mb(\ma)^n,\mb(\ma),\vec\pi)$, where the parameter space is $\mb(\ma)$ and for any $W\in\mb(\ma)$ and $V\in\mb(\ma)$,  $\pi_W(V)=\frac{1}{Z}\varphi^{\kt(V, W)}$, where $Z$ is the normalization factor. \end{dfn}

In classical Condorcet's model~\citep{Condorcet1785:Essai,Young88:Condorcet}, the sample space consists of linear orders and the parameter space consists of binary relations. The model in Definition~\ref{dfn:modelcondorcet} is a variant of Condorcet's model, where the sample space consists of binary relations. In other words, each agent is allowed to use a binary relation to represent his or her preferences---transitivity is not required as in classical Condorcet's model or Mallows' model.

\end{paragraph}

\begin{paragraph}{\bf Statistical Hypothesis Testing: The Neyman-Pearson Framework.}
Given a statistical model $\mm=(\ms,\Theta,\vec \pi)$, the decision-maker first chooses two non-overlapping subsets of parameters $H_0,H_1\subseteq \Theta$, where $H_0$ is called the {\em null hypothesis} and $H_1$ is called the {\em alternative hypothesis}. The goal of hypothesis testing is to decide whether the ground truth parameter is in $H_0$ ({\em retaining} the null hypothesis) or in $H_1$ ({\em rejecting} the null hypothesis), based on the observed data $P\in \ms$.  To simplify notation, we let $0$ denote retain and let $1$ denote reject. A test is characterized by a (randomized) {\em critical function} $f:\ms\ra [0,1]$ such that for any $P\in\ms$, with probability $f(P)$ the outcome of testing is $1$ ({reject}). When $H_0$ (or $H_1$) contains a single parameter, it is called a {\em simple} hypothesis; otherwise it is called a {\em composite} hypothesis.

A test $f$ is often evaluated by its {\em size} and {\em power}. The size of $f$ is the maximum probability for $f$ to wrongly outputs $1$ when the ground truth is in $H_0$ (such cases are called Type I errors or false positives), where the max is taken over all parameters in $H_0$. More precisely, for any $h_0\in H_0$, we let $\size(f,h_0)=E_{P\sim \pi_{h_0}}f(P)$, and $\size(f)=\sup_{h_0\in H_0}\size(f,h_0)$. If the size of $f$ is $\alpha$, then $f$ is called a {\em level-$\alpha$} test. For any $h_1\in H_1$, the power of $f$ at $h_1$ is the probability that $f$ correctly outputs $1$ when the ground truth is $h_1$. More precisely, we let $\power(f,h_1)=E_{P\sim \pi_{h_1}}f(P)$, where the expectation is take over randomly generated profiles from $\pi_{h_1}$. 
We would like  a test $f$ to have low size and high power, but often tradeoffs must be made.

\begin{ex}
Let $\mallows$ denote a Mallows' model with $m=3$ and $n=1$. Let $\ma=\{1,2,3\}$, $h_1=[1\succ 2\succ 3]$, and let $H_0$ denote the other rankings. Let $f$ be a test where $f(1\succ 2\succ 3)=1,f(2\succ 1\succ 3)=f(1\succ 3\succ 2)=0.5$, and $f$ outputs $0$ for all other rankings. We have $\size(f)=\size(f,2\succ 1\succ 3)=\size(f,1\succ 3\succ 2)=(0.5+\varphi+0.5\varphi^2)/Z$, where $Z$ is the normalization factor. $\power(f,1\succ 2\succ 3)=(1+\varphi)/Z$.
\end{ex}

\Omit{
There is a natural analogy between hypothesis testing and the following production problem. 

\vspace{3mm}
{\noindent\bf The production problem.} {\em 
Let $H_0$ denote a set of workers, let $\ms$ denote the types of {\em\bf divisible} items to be produced, and for now suppose that there is a single buyer $h_1$. For any $P^i\in \ms$, any $h_0\in H_0$, producing one unit of  item $P^i$ costs $\pi_{h_0}(P^i)$ hours of worker $h_0$. Suppose that workers have different skills and cannot be substituted, and their labor is the only cost for producing the items. The buyer only wants no more than one unit of each item, and the unit price for item $P^i$ is $\pi_{h_1}(P^i)$. Each test $f$ corresponds to a production plan, where $f(P^i)$ is the amount of item $P^i$ that will be produced. $\size(f,h_0)$ is the total amount of time spent by worker $h_0$ in production, and $\size(f)$ is the maximum number of hours of any single worker. $\power(f)$ is the total revenue.} \hfill$\blacksquare$
\vspace{3mm}
}



Given a statistical model $\mm$, $H_0$, a parameter $h_1\not\in H_0$, and $0< \alpha<1$, a {\em level-$\alpha$ most powerful test} $f_\alpha$ is a test with the highest power among all tests whose size is no more than $\alpha$. 
For finite $H_0$, a most powerful test always exists and may not be unique.
For composite $H_1$, it is possible that for different $h_1\in H_1$, the most powerful tests are different. If there exists a level-$\alpha$ test $f_\alpha$ that is most powerful for all $h_1\in H_1$, then $f$ is called a level-$\alpha$ {\em uniformly most powerful (UMP) test} for $H_0$ vs.~$H_1$. UMP is a strong notion of optimality and a UMP test may not exists. 


For simple $H_0$ vs.~simple $H_1$, that is, $|H_0|=|H_1|=1$, the fundamental lemma of Neyman and Pearson characterizes the most powerful tests as {\em likelihood ratio tests}, defined as follows. 
\begin{dfn}[\bf Likelihood ratio test]
\label{dfn:lrtest}Given a model $\mm$ and $0< \alpha< 1$. For any $h_0, h_1\in \Theta$ with $h_0\ne h_1$ and any $P\in\ms$, we let $\ratio_{h_0,h_1}(P)=\frac{\pi_{h_1}(P)}{\pi_{h_0}(P)}$ denote the {\em likelihood ratio} of $P$ and let
$$\lr_{\alpha,h_0,h_1}(P)=\left\{\begin{array}{cc}1&\text{if }\ratio_{h_0,h_1}(P)>k_\alpha\\
0&\text{if }\ratio_{h_0,h_1}(P)<k_\alpha\\
\gamma_\alpha&\text{if }\ratio_{h_0,h_1}(P)=k_\alpha
\end{array}\right.,$$
denote the {\em level-$\alpha$ likelihood ratio test}, where $k_\alpha\ge 0$ and $\gamma_\alpha$ are chosen such that $\size(\lr_{\alpha,h_0,h_1})=\alpha$.
\end{dfn}
\begin{lem}\label{lem:NP} {\bf (The Neyman-Pearson Lemma, see e.g.~\citep{Lehmann08:Testing})} For any simple vs.~simple test ($h_0$ vs.~$h_1$) and any $0<\alpha< 1$, the likelihood ratio test $\lr_{\alpha,h_0,h_1}$ is a level-$\alpha$ most powerful test. Moreover, any most powerful test must agree with $\lr_{\alpha,h_0,h_1}$ except on $P\in\ms$ with $\ratio_{h_0,h_1}(P)=k_{\alpha}$.
\end{lem}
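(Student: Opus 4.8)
The plan is to follow the classical two-part argument of Neyman and Pearson, specialized to a finite sample space $\ms$ so that every expectation is a finite sum and no measure-theoretic care is needed. \textbf{Step 1 (calibrating the cutoff).} Define $G(k) = \pr_{P\sim\pi_{h_0}}[\ratio_{h_0,h_1}(P) > k]$ for $k \ge 0$. Since $\ms$ is finite, $G$ is a non-increasing, right-continuous step function with $G(k)\to 0$ as $k\to+\infty$, and $G(0^-) = \pr_{P\sim\pi_{h_0}}[\ratio_{h_0,h_1}(P) \ge 0] = 1$ (data points with $\pi_{h_0}(P)=0$ carry no $\pi_{h_0}$-mass). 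Set $k_\alpha = \inf\{k \ge 0 : G(k) \le \alpha\}$, so that $G(k_\alpha) \le \alpha \le G(k_\alpha^-) = G(k_\alpha) + \pr_{P\sim\pi_{h_0}}[\ratio_{h_0,h_1}(P)=k_\alpha]$; then pick $\gamma_\alpha \in [0,1]$ with $G(k_\alpha) + \gamma_\alpha\,\pr_{P\sim\pi_{h_0}}[\ratio_{h_0,h_1}(P)=k_\alpha] = \alpha$. By construction $\size(\lr_{\alpha,h_0,h_1}) = \alpha$, so $\lr_{\alpha,h_0,h_1}$ is a valid level-$\alpha$ test, which also shows the $k_\alpha,\gamma_\alpha$ of Definition~\ref{dfn:lrtest} exist.

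\textbf{Step 2 (optimality via a pointwise sign check).} Let $f$ be any test with $\size(f, h_0) \le \alpha$. For each $P \in \ms$ consider
\[
g(P) \;=\; \bigl(\lr_{\alpha,h_0,h_1}(P) - f(P)\bigr)\bigl(\pi_{h_1}(P) - k_\alpha\,\pi_{h_0}(P)\bigr).
\]
A three-way split on the sign of $\ratio_{h_0,h_1}(P) - k_\alpha$ shows $g(P) \ge 0$ for every $P$: if $\ratio_{h_0,h_1}(P) > k_\alpha$ then $\lr_{\alpha,h_0,h_1}(P) = 1 \ge f(P)$ and $\pi_{h_1}(P) - k_\alpha\pi_{h_0}(P) > 0$; if $\ratio_{h_0,h_1}(P) < k_\alpha$ then $\lr_{\alpha,h_0,h_1}(P) = 0 \le f(P)$ and $\pi_{h_1}(P) - k_\alpha\pi_{h_0}(P) < 0$; if $\ratio_{h_0,h_1}(P) = k_\alpha$ the second factor vanishes (the edge cases $\pi_{h_0}(P)=0$ or $\pi_{h_1}(P)=0$ fall under the first or third case after reading $\ratio_{h_0,h_1}$ appropriately). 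Summing over the finite set $\ms$,
\[
\power(\lr_{\alpha,h_0,h_1}, h_1) - \power(f, h_1) \;\ge\; k_\alpha\bigl(\size(\lr_{\alpha,h_0,h_1}, h_0) - \size(f, h_0)\bigr) \;=\; k_\alpha\bigl(\alpha - \size(f,h_0)\bigr) \;\ge\; 0,
\]
where the last step uses $k_\alpha \ge 0$ and $\size(f,h_0) \le \alpha$. Hence $\lr_{\alpha,h_0,h_1}$ attains the maximum power among level-$\alpha$ tests.

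\textbf{Step 3 (necessity) and the hard part.} If $f$ is itself a level-$\alpha$ most powerful test, then the chain in Step~2 is an equality, so $\sum_{P\in\ms} g(P) = 0$; since each $g(P) \ge 0$, we get $g(P) = 0$ for all $P$. For every $P$ with $\ratio_{h_0,h_1}(P) \ne k_\alpha$ the factor $\pi_{h_1}(P) - k_\alpha\pi_{h_0}(P)$ is nonzero (the only exception, $\pi_{h_0}(P) = \pi_{h_1}(P) = 0$, concerns a $P$ that affects neither size nor power and may be dropped from $\ms$), which forces $f(P) = \lr_{\alpha,h_0,h_1}(P)$; thus $f$ agrees with $\lr_{\alpha,h_0,h_1}$ off $\{P : \ratio_{h_0,h_1}(P) = k_\alpha\}$, as claimed. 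I expect the only real obstacle to be Step~1: pinning down $(k_\alpha,\gamma_\alpha)$ correctly, in particular the boundary situation where $\alpha$ is large enough that $k_\alpha = 0$ and where some data points have zero probability under $\pi_{h_0}$ or under $\pi_{h_1}$ (so $\ratio_{h_0,h_1}$ must be interpreted with care). Steps~2 and 3 then go through verbatim precisely because $\ms$ is finite.
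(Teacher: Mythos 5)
Your proof is correct: it is the classical Neyman--Pearson argument (calibrate $(k_\alpha,\gamma_\alpha)$ so the size is exactly $\alpha$, then the pointwise sign check on $(\lr_{\alpha,h_0,h_1}(P)-f(P))(\pi_{h_1}(P)-k_\alpha\pi_{h_0}(P))$ for both optimality and the uniqueness-off-the-boundary claim), and the finite sample space indeed makes every step a finite sum, with the degenerate points $\pi_{h_0}(P)=0$ or $\pi_{h_1}(P)=0$ handled appropriately. The paper does not prove this lemma at all---it is quoted as a known result from Lehmann and Romano---so there is nothing to compare against beyond noting that your argument is the standard textbook proof of exactly the cited statement.
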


\Omit{The Neyman-Pearson lemma has the following intuitive explanation in the context of the production problem. Now there is a single worker and a single buyer. The optimal plan is to start with the worker's most cost-effective items---those with highest dollar per hour ratio: $\ratio(P)=\frac{\pi_{h_1}(P)}{\pi_{h_0}(P)}$, and then move on to the next most cost-effective item, until her limit $\alpha$ is reached. This corresponds to the most powerful test guaranteed by the Neyman-Pearson lemma. For different types of items with the same cost-effectiveness, the worker can choose any item to produce. This corresponds to the cases with $\ratio(P,h_0,h_1)=k_\alpha$.
}

\begin{ex}\label{ex:mallows-simple-vs-simple}
\rm Given a  Mallows' model. Let $H_0=\{h_0\}$ and $H_1=\{h_1\}$. For any $n$-profile $P_n$, we have $\ratio(P_n)=\frac{\varphi^{\kt(P_n,h_1)}}{\varphi^{\kt(P_n,h_0)}}=\varphi^{\kt(P_n,h_1)-\kt(P_n,h_0)}$. Therefore, it follows from the Neyman-Pearson lemma that for any $0< \alpha< 1$, there exist $K_\alpha$ and $\Gamma_\alpha$ such that the following test $f_\alpha$ is a level-$\alpha$ most powerful test: for any $n$-profile $P_n$, 
$$f_{\alpha}(P_n)=\left\{\begin{array}{ll}1&\text{if }\kt(P_n,h_0)-\kt(P_n,h_1)>K_\alpha\\
0&\text{if }\kt(P_n,h_0)-\kt(P_n,h_1)<K_\alpha\\ 
\Gamma_\alpha&\text{if }\kt(P_n,h_0)-\kt(P_n,h_1)=K_\alpha
\end{array}\right..$$
\hfill$\blacksquare$
\end{ex}

For composite $H_0$ vs simple $H_1=\{h_1\}$, a generalization of the Neyman-Pearson lemma exists. The idea is to use a distribution $\Lambda$ over $H_0$ to compress $H_0$ into a ``combined'' parameter, defined as follows. 

\begin{dfn}
For any $\mm=(\ms,\Theta,\vec\pi)$, any $H_0\subseteq \Theta$, and any $h_1\in(\Theta\setminus H_0)$. Let $\Lambda$ denote a distribution over $H_0$ whose support set is denoted by $\supp(\Lambda)$, and let $h_0^\Lambda$ denote a new parameter whose distribution over $\ms$ is the probabilistic mixture of $\{\pi_{h_0}:h_0\in\Theta\}$ according to $\Lambda$. For any $0< \alpha< 1$ and any $P\in\ms$, 
\begin{itemize}
\item let $\ratio_{\Lambda,h_1}(P) =\frac{\pi_{h_1}(P)}{\sum_{h_0\in H_0}\Lambda(h_0)\pi_{h_0}(P)}$, and 
\item let $\lr_{\alpha,\Lambda,h_1}(P)$ denote the {likelihood ratio test} for $h_0^\Lambda$ vs.~$h_1$ as in Definition~\ref{dfn:lrtest}. 
\end{itemize}
\end{dfn}
The following lemma states that $\lr_{\alpha,\Lambda,h_1}$ is a most powerful test for $H_0$ vs.~$h_1$ iff two  conditions are satisfied.
\begin{lem}\label{lem:hvs} {\bf (Theorem 3.8.1 and Corollary 3.8.~by \citet{Lehmann08:Testing})}
For composite vs.~simple test ($H_0$ vs.~$h_1$) and any distribution $\Lambda$ over $H_0$, the likelihood ratio test $\lr_{\alpha,\Lambda,h_1}$ is a level-$\alpha$ most powerful test  if and only if the following two conditions are satisfied. 

(i) For any $h_0^*\in \supp(\Lambda)$,  $\size(\lr_{\alpha,\Lambda,h_1},h_0^*)=\alpha$. 

(ii) For any $h_0\in H_0$, $\size(\lr_{\alpha,\Lambda,h_1},h_0)\le \alpha$. 

Moreover, if there is no $P\in\ms$ with $\ratio_{\Lambda,h_1}(P)=k_\alpha$, then $\lr_{\alpha,\Lambda,h_1}$ is the unique level-$\alpha$ most powerful  test.
\end{lem}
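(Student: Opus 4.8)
The plan is to reduce the composite-versus-simple problem ``$H_0$ vs.~$h_1$'' to the simple-versus-simple problem ``$h_0^\Lambda$ vs.~$h_1$'' and then invoke the Neyman-Pearson Lemma (Lemma~\ref{lem:NP}). Two elementary observations drive everything. First, for any test $g$, $\size(g,h_0^\Lambda)=E_{P\sim\pi_{h_0^\Lambda}}g(P)=\sum_{h_0\in H_0}\Lambda(h_0)\,\size(g,h_0)$, i.e.\ the size of $g$ against the mixed parameter is the $\Lambda$-weighted average of its sizes against the $h_0$'s in $\supp(\Lambda)$. Second, by Definition~\ref{dfn:lrtest} the constants $k_\alpha,\gamma_\alpha$ defining $\lr_{\alpha,\Lambda,h_1}$ are chosen so that $\size(\lr_{\alpha,\Lambda,h_1},h_0^\Lambda)=\alpha$ exactly, so $\lr_{\alpha,\Lambda,h_1}$ is a size-exactly-$\alpha$ likelihood ratio test for the simple problem $h_0^\Lambda$ vs.~$h_1$, and hence by Lemma~\ref{lem:NP} a level-$\alpha$ most powerful test for that problem.

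For the ``if'' direction, assume (i) and (ii). Condition (ii) gives $\size(\lr_{\alpha,\Lambda,h_1})=\sup_{h_0\in H_0}\size(\lr_{\alpha,\Lambda,h_1},h_0)\le\alpha$, so $\lr_{\alpha,\Lambda,h_1}$ is itself a valid level-$\alpha$ test for $H_0$ vs.~$h_1$. Now take an arbitrary competing level-$\alpha$ test $f$ for $H_0$ vs.~$h_1$, so that $\size(f,h_0)\le\alpha$ for every $h_0\in H_0$; averaging over $\supp(\Lambda)$ via the first observation yields $\size(f,h_0^\Lambda)\le\alpha$, i.e.\ $f$ is also a level-$\alpha$ test for $h_0^\Lambda$ vs.~$h_1$. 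Since $\lr_{\alpha,\Lambda,h_1}$ is most powerful at level $\alpha$ for that simple problem, $\power(\lr_{\alpha,\Lambda,h_1},h_1)\ge\power(f,h_1)$. As $f$ ranged over all level-$\alpha$ tests for $H_0$ vs.~$h_1$, this shows $\lr_{\alpha,\Lambda,h_1}$ is most powerful for $H_0$ vs.~$h_1$. (Note that this direction in fact only used (ii).)

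For the ``only if'' direction, assume $\lr_{\alpha,\Lambda,h_1}$ is a level-$\alpha$ most powerful test for $H_0$ vs.~$h_1$. Being level $\alpha$ means $\size(\lr_{\alpha,\Lambda,h_1},h_0)\le\alpha$ for all $h_0\in H_0$, which is precisely (ii). For (i): the first observation gives $\sum_{h_0\in\supp(\Lambda)}\Lambda(h_0)\,\size(\lr_{\alpha,\Lambda,h_1},h_0)=\size(\lr_{\alpha,\Lambda,h_1},h_0^\Lambda)=\alpha$; since each summand is at most $\alpha$ by (ii) and each weight $\Lambda(h_0)$ is strictly positive on $\supp(\Lambda)$, every summand must equal $\alpha$, which is (i). Finally, the uniqueness clause: if $\ratio_{\Lambda,h_1}(P)\ne k_\alpha$ for every $P\in\ms$, then any level-$\alpha$ most powerful test $f$ for $H_0$ vs.~$h_1$ is, by the argument of the ``if'' direction, a level-$\alpha$ test for $h_0^\Lambda$ vs.~$h_1$ whose power at $h_1$ equals that of $\lr_{\alpha,\Lambda,h_1}$, hence is most powerful for $h_0^\Lambda$ vs.~$h_1$; the uniqueness part of Lemma~\ref{lem:NP} then forces $f$ to agree with $\lr_{\alpha,\Lambda,h_1}$ except on $\{P:\ratio_{\Lambda,h_1}(P)=k_\alpha\}=\emptyset$, so $f=\lr_{\alpha,\Lambda,h_1}$.

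The argument is essentially a corollary of the Neyman-Pearson Lemma, so there is no deep obstacle; the only places that require care are bookkeeping which hypothesis ``$\size$'' refers to and verifying that the reduction to $h_0^\Lambda$ is lossless — namely that $\lr_{\alpha,\Lambda,h_1}$ spends exactly its error budget $\alpha$ against $h_0^\Lambda$ while every competitor for the composite null remains feasible for the simple one. (The genuine difficulty, emphasized elsewhere in the paper, is not proving this lemma but \emph{exhibiting} a $\Lambda$ for which conditions (i) and (ii) hold simultaneously for all $h_1\in H_1$.)
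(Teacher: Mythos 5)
Your proof is correct. The paper does not actually prove this lemma---it imports it verbatim from Lehmann and Romano---and your argument (reduce the composite null to the single mixture parameter $h_0^\Lambda$, apply the Neyman--Pearson Lemma there, and use the identity $\size(g,h_0^\Lambda)=\sum_{h_0}\Lambda(h_0)\,\size(g,h_0)$ to pass feasibility and the size conditions back and forth between the composite and simple problems) is precisely the standard proof of the cited result, including your accurate side observation that condition (i) is automatic given (ii) together with the exact-size normalization of $\lr_{\alpha,\Lambda,h_1}$ against $h_0^\Lambda$.
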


The distribution $\Lambda$ in Lemma~\ref{lem:hvs} is called a {\em least favorable distribution}. If $|\supp(\Lambda)|=1$, then $\Lambda$ is called a {\em deterministic least favorable distribution}. If $\Lambda$ is a least favorable distribution for all levels of significance $0<\alpha< 1$, then it is called a {\em uniformly least favorable distribution}~\citep{Reinhardt1961:The-Use-of-Least}. 


\end{paragraph}

\section{\MakeUppercase{Test Setup and Basic Lemmas}}
We first introduce two types of hypothesis tests for choices. 
Given an alternative $a$, for Mallows' model we define $\atopr=\{V\in\ml(\ma):\forall b\in\ma, a\succ_V b\}$; similarly, for Condorcet's model we define $\atopb=\{V\in\mb(\ma):\forall b\in\ma, a\succ_V b\}$. $\atopr$ and $\atopb$ naturally correspond to $a$ being ranked at the top in the the ground truth in Mallows' model and in Condorcet's model, respectively.

\begin{dfn}[(Non-)Winner Tests] Given an alternative $a$, in a {\em non-winner} test for Mallows' model, we let $H_0=\atopr$; and in a {\em winner} test for Mallows' model, we let $H_1=\atopr$. 

 Given an alternative $a$,  in a {\em non-winner} test for Condorcet's model, we let $H_0=\atopb$;   and in a {\em winner} test for Condorcet's model, we let $H_1=\atopb$.
\end{dfn}

The rationale behind the naming of ``non-winner" and ``winner'' is the following. Because $H_0$ is often chosen as the devil's advocate and the goal of testing is often to reject $H_0$, when setting $H_0=\atopr$ under Mallows' model, we are hoping to reject $H_0$, which means that $a$ is not the winner. 
We note  that the decision-maker still needs to specify $H_1$ in a non-winner test and specify $H_0$ in a winner test. Various natural choices of $H_1$ or $H_0$ will be explored in Section~\ref{sec:mallows} and Section~\ref{sec:condorcet}. 

We now  present two general lemmas on least favorable distributions that will be frequently used in this paper. 
For any model $\mm=(\ms,\Theta,\vec\pi)$, any composite vs.~simple test ($H_0$ vs.~$h_1$), any distribution $\Lambda$ over $H_0$, and any $h_0\in H_0$, we define a random variable $X_{h_0}^\Lambda:\ms\ra \mathbb R$ such that for any $P\in\ms$, $\Pr(P)=\pi_{h_0}(P)$ and $X_{h_0}^\Lambda(P)=\log\ratio_{\Lambda,h_1}(P)$. A random variable $X$ {\em weakly  first-order stochastically dominates} ({\em weakly dominates} for short) another random variable $Y$, if for all $p\in \mathbb R$, $\Pr(X\ge p)\ge \Pr(Y\ge p)$.

\begin{lem}\label{lem:dom} $\Lambda$ is a uniformly least favorable distribution for $H_0$ vs.~$h_1$ if and only if for any $h_0^*\in\supp(\Lambda)$ and any $h_0\in H_0$, $X_{h_0^*}^\Lambda$ weakly dominates $X_{h_0}^\Lambda$.
\end{lem}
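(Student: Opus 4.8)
\noindent\emph{Proof plan.} The plan is to derive both directions from Lemma~\ref{lem:hvs}, applied at \emph{every} level $0<\alpha<1$, using that in a finite model $\ratio_{\Lambda,h_1}$ takes only finitely many values, so every test $\lr_{\alpha,\Lambda,h_1}$ is a randomized thresholding of $X^\Lambda=\log\ratio_{\Lambda,h_1}$. The computation everything rests on is this: for a cutoff $k$ and randomization $\gamma\in[0,1]$, the test ``reject if $\ratio_{\Lambda,h_1}>k$, reject with probability $\gamma$ if $\ratio_{\Lambda,h_1}=k$'' has size under $\pi_{h_0}$ equal to $\Pr(X_{h_0}^\Lambda>\log k)+\gamma\Pr(X_{h_0}^\Lambda=\log k)$, which is affine in $\gamma$, equals $\Pr(X_{h_0}^\Lambda>\log k)$ at $\gamma=0$, and equals $\Pr(X_{h_0}^\Lambda\ge\log k)$ at $\gamma=1$.

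For the ``if'' direction, assume $X_{h_0^*}^\Lambda$ weakly dominates $X_{h_0}^\Lambda$ for all $h_0^*\in\supp(\Lambda)$ and $h_0\in H_0$. Evaluating weak dominance at $p=\log k$ and at $p$ just above $\log k$ shows that the two endpoint sizes ($\gamma=1$ and $\gamma=0$) under $h_0^*$ are at least those under $h_0$, so by affine interpolation in $\gamma$ the size under $h_0^*$ dominates the size under $h_0$ for every test of this threshold form; apply this with $f=\lr_{\alpha,\Lambda,h_1}$ and its calibrated cutoff and randomization. Since $\supp(\Lambda)\subseteq H_0$, the inequality holds both ways between any two elements of $\supp(\Lambda)$, so all the sizes $\size(f,h_0^*)$, $h_0^*\in\supp(\Lambda)$, coincide; as they average against $\Lambda$ to $\size(f,h_0^\Lambda)=\alpha$, each equals $\alpha$, which is Condition (i) of Lemma~\ref{lem:hvs}, while $\size(f,h_0)\le\size(f,h_0^*)=\alpha$ for every $h_0\in H_0$ is Condition (ii). Hence $\lr_{\alpha,\Lambda,h_1}$ is a level-$\alpha$ most powerful test for $H_0$ vs.~$h_1$ at every $\alpha$, i.e., $\Lambda$ is uniformly least favorable.

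For the ``only if'' direction, fix $h_0^*\in\supp(\Lambda)$ and $h_0\in H_0$; since $\ratio_{\Lambda,h_1}$ has finite range it suffices to show $\Pr(X_{h_0^*}^\Lambda\ge\log r)\ge\Pr(X_{h_0}^\Lambda\ge\log r)$ for each attained value $r$ of $\ratio_{\Lambda,h_1}$. Put $\alpha^\star=\pi_{h_0^\Lambda}(\ratio_{\Lambda,h_1}\ge r)$. In the generic case $0<\alpha^\star<1$, the indicator test $\mathbf 1[\ratio_{\Lambda,h_1}\ge r]$ is a legitimate realization of $\lr_{\alpha^\star,\Lambda,h_1}$ (cutoff $r$, randomization $1$), so since $\Lambda$ is least favorable at level $\alpha^\star$, Conditions (i) and (ii) of Lemma~\ref{lem:hvs} give $\Pr(X_{h_0^*}^\Lambda\ge\log r)=\alpha^\star$ and $\Pr(X_{h_0}^\Lambda\ge\log r)\le\alpha^\star$, which is the claim. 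The boundary cases are easy: $\alpha^\star=1$ forces $\Pr(X_{h_0^*}^\Lambda\ge\log r)=1$ because $\supp(\pi_{h_0^*})\subseteq\supp(\pi_{h_0^\Lambda})$ for $h_0^*\in\supp(\Lambda)$; and $\alpha^\star=0$ occurs only when $r$ is infinite (a data point of $\pi_{h_0^\Lambda}$-probability $0$ but positive $\pi_{h_1}$-probability), in which case any level below $\pi_{h_0}(\ratio_{\Lambda,h_1}=\infty)$ would make the calibrated $\lr_{\alpha,\Lambda,h_1}$ reject that data point with probability one and violate Condition (ii), so uniform least-favorability forces $\Pr(X_{h_0}^\Lambda\ge\log r)=0$ as well.

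I expect the ``only if'' direction to be the crux: least-favorability at a single level $\alpha$ constrains only the one threshold that calibrates to $\alpha$ against $h_0^\Lambda$, so dominance at every threshold must be assembled by sweeping $\alpha$ over $(0,1)$ and matching each attained value of $\ratio_{\Lambda,h_1}$ to the level at which the calibrated test is the indicator of its upper level set. The remaining work is bookkeeping of finite-model degeneracies — infinite likelihood ratios on $\pi_{h_0^\Lambda}$-null data points, the endpoint levels $\alpha^\star\in\{0,1\}$, and the normalization identity $\alpha=\sum_{h_0^*\in\supp(\Lambda)}\Lambda(h_0^*)\size(\lr_{\alpha,\Lambda,h_1},h_0^*)$ — all dispatched via the support-containment fact $\supp(\pi_{h_0^*})\subseteq\supp(\pi_{h_0^\Lambda})$ and standard ratio conventions.
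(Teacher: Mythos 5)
Your proof is correct and follows essentially the same route as the paper's: both directions reduce to Lemma~\ref{lem:hvs} by writing the size of the calibrated likelihood-ratio test as a $\gamma$-interpolated tail probability of $X^{\Lambda}$ and sweeping the threshold over all levels $\alpha$. You are in fact somewhat more explicit than the paper --- you verify condition (i) via the averaging identity $\sum_{h_0^*\in\supp(\Lambda)}\Lambda(h_0^*)\size(f,h_0^*)=\alpha$ and treat the degenerate levels $\alpha^\star\in\{0,1\}$, steps the paper leaves implicit --- but the underlying argument is the same.
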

\begin{proof} To simplify notation we let $\lr_\alpha$ and $\ratio$ to denote $\lr_{\alpha,\Lambda,h_1}$ and $\ratio_{\Lambda,h_1}$, respectively. For any $0< \alpha< 1$ and any $h_0\in H_0$, we have 
\begin{align}
&\size(\lr_{\alpha},h_0)\notag=\sum\nolimits_{P\in\ms:\ratio(P)>k_\alpha}\pi_{h_0}(P)\\
&+\gamma_\alpha \sum\nolimits_{P\in\ms:\ratio(P)=k_\alpha}\pi_{h_0}(P)\notag\\
=& \Pr(X_{h_0}^\Lambda>\log k_\alpha)+\gamma_\alpha \Pr(X_{h_0}^\Lambda=\log k_\alpha)\\
=&(1-\gamma_\alpha)\Pr(X_{h_0}^\Lambda> \log k_\alpha)+\gamma_\alpha \Pr(X_{h_0}^\Lambda\ge \log k_\alpha)\notag\\
=&  (1-\gamma_\alpha)\lim_{x\ra \log k_\alpha^-}\Pr(X_{h_0}^\Lambda\ge  x)+ \gamma_\alpha \Pr(X_{h_0}^\Lambda\ge k_\alpha)\notag\label{eq:sizedom}
\end{align}

The ``if'' direction: for any $h_0\in H_0$ and any $h_0^*\in\supp(\Lambda)$, because $X_{h_0^*}^\Lambda$ weakly dominates $X_{h_0}^\Lambda$, we have that for any $x\in\mathbb R$, $\Pr(X_{h_0^*}^\Lambda\ge x)\ge \Pr(X_{h_0}^\Lambda\ge x)$. It follows from (1) that $\size(\lr_{\alpha},h_1),h_0^*) \ge \size(\lr_{\alpha},h_0)$. By Lemma~\ref{lem:hvs}, $\lr_{\alpha}$ is a level-$\alpha$ most powerful test. Therefore $\Lambda$ is a uniformly least favorable distribution.

The ``only if'' direction: suppose for the sake of contradiction that this is not true. Let $h_0\in H_0$ and $h_0^*\in\supp(\Lambda)$ be such that $X_{h_0^*}^\Lambda$ does not weakly dominate $X_{h_0}^\Lambda$. It follows that there exists $x\in\mathbb R$ such that $\Pr(X_{h_0^*}^\Lambda\ge x)<\Pr(X_{h_0}^\Lambda\ge x)$. Let $\alpha=\Pr(X_{h_0^*}^\Lambda\ge x)$. Because $\Lambda$ is uniformly least favorable, the size of $\lr_{\alpha}$ must be $\alpha$, where $k_\alpha=2^x$ and $\gamma_\alpha=1$. By Lemma~\ref{lem:hvs}, $\Pr(X_{h_0}^\Lambda\ge x)=\size(\lr_{\alpha},h_0)\ge \size(\lr_{\alpha},h_0^*)=\Pr(X_{h_0^*}^\Lambda\ge x)$, which is a contradiction.
\end{proof}

\begin{ex}\label{ex:umpm3} \rm Let $\mm$ denote a Mallows' model with $m=3$ and $n=1$. Let $\ma=\{1,2,3\}$, $h_1=[1\succ 2\succ 3]$ and let $\Lambda$ denote the uniform distribution over $\{[2\succ 1\succ 3], [1\succ 3\succ 2]\}$. We will apply Lemma~\ref{lem:dom} to prove that $\Lambda$ is a uniformly least favorable distribution for testing $H_0=(\ml(\ma)-\{[1\succ 2\succ 3]\})$ vs.~$[1\succ 2\succ 3]$. The likelihood ratios of all rankings are summarized in Table~\ref{tab:ratio} in the increasing order.
\begin{table}[htp]
\centering
\begin{tabular}{|c|c|c|c|}
\hline $V$ & $3\succ 2\succ 1$ & others & $1\succ 2\succ 3$\\
\hline $\ratio_{\Lambda,1\succ 2\succ 3}(V)$:&$\varphi$&$\frac{2\varphi}{1+\varphi^2}$&$\frac{1}{\varphi}$\\
\hline
\end{tabular}
\caption{Likelihood ratios.\label{tab:ratio}\vspace{-6mm}}
\end{table}

For any $h_1\in H_0$, $X_{h_0}^\Lambda$ takes three values: $\log \frac{1}{\varphi}$, $\log \frac{2\varphi}{1+\varphi^2}$, and $\log {\varphi}$. The probabilities for the five random variables taking these three values  are summarized in Table~\ref{tab:rv3}.
\begin{table}[htp]
\centering
\begin{tabular}{|c|c|c|c|}
\hline  &$\log \varphi$&$\log \frac{2\varphi}{1+\varphi^2}$&$\log \frac{1}{\varphi}$\\
\hline $X_{1\succ 3\succ 2}^\Lambda$ and $X_{2\succ 1\succ 3}^\Lambda$& $\frac{\varphi^2}{Z}$&  $\frac{1+\varphi+\varphi^2+\varphi^3}{Z}$ & $\frac{\varphi}{Z}$\\
\hline $X_{2\succ 3\succ 1}^\Lambda$ and  $X_{3\succ 1\succ 2}^\Lambda$& $\frac{\varphi}{Z}$&  $\frac{1+\varphi+\varphi^2+\varphi^3}{Z}$ & $\frac{\varphi^2}{Z}$\\
\hline $X_{3\succ 2\succ 1}^\Lambda$& $\frac{1}{Z}$&  $\frac{2(\varphi+\varphi^2)}{Z}$ & $\frac{\varphi^3}{Z}$\\
\hline
\end{tabular}
\caption{$X_{h_0}^\Lambda$ for all $h_0\in H_0$, where $Z$ is the normalization factor.\label{tab:rv3}}
\end{table}

Because $0<\varphi<1$, it is not hard to verify that $X_{1\succ 3\succ 2}^\Lambda$ and $X_{2\succ 1\succ 3}^\Lambda$ weakly dominate other random variables. By Lemma~\ref{lem:dom}, $\Lambda$ is a uniformly least favorable distribution. \hfill$\blacksquare$
\end{ex}

Our second lemma states that if we can find a deterministic uniformly least favorable distribution for $n=1$, then it is also uniformly least favorable for the same statistical model with $n\ge 2$ i.i.d.~samples.

\begin{lem}\label{lem:ext} Suppose $\Lambda$ is a deterministic uniformly least favorable distribution for composite vs.~simple test ($H_0$ vs.~$h_1$) under $\mm=(\ms,\Theta,\vec\pi)$. Then for any $n\in \mathbb N$, $\Lambda$ is also a uniformly least favorable distribution for testing $H_0$ vs.~$h_1$ under $\mm=(\ms^n,\Theta,\vec\pi)$ with $n$ i.i.d.~samples.
\end{lem}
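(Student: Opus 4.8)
The plan is to reduce the statement to Lemma~\ref{lem:dom} plus the elementary fact that weak first-order stochastic dominance is preserved by sums of independent random variables. Since $\Lambda$ is deterministic, write $\supp(\Lambda)=\{h_0^*\}$; then $\ratio_{\Lambda,h_1}$ is simply the simple-vs-simple likelihood ratio $\pi_{h_1}/\pi_{h_0^*}$. By Lemma~\ref{lem:dom} applied to the $n=1$ model, the hypothesis that $\Lambda$ is uniformly least favorable there is \emph{equivalent} to: for every $h_0\in H_0$, the random variable $X_{h_0^*}^\Lambda$ weakly dominates $X_{h_0}^\Lambda$, where $X_{h_0}^\Lambda\sim\pi_{h_0}$ takes the value $\log\ratio_{h_0^*,h_1}(P)$ on $P$. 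This is the only information about $\Lambda$ we will use.

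First I would record how these random variables transform under $n$ i.i.d.~samples. For $P_n=(P^1,\dots,P^n)\in\ms^n$ we have $\pi_h(P_n)=\prod_{i=1}^n\pi_h(P^i)$, so the likelihood ratio for the $n$-fold model factorizes, $\ratio^{(n)}_{\Lambda,h_1}(P_n)=\prod_{i=1}^n\ratio_{h_0^*,h_1}(P^i)$, and therefore its logarithm, as a random variable under $\pi_{h_0}^n$, is a sum $\sum_{i=1}^n Y_i$ of i.i.d.~copies $Y_i$ of $X_{h_0}^\Lambda$. In particular the corresponding random variable for $h_0^*$ is a sum of $n$ i.i.d.~copies of $X_{h_0^*}^\Lambda$. (It is here that determinism of $\Lambda$ matters: for a non-deterministic $\Lambda$ the mixture $\sum_{h_0}\Lambda(h_0)\pi_{h_0}$ does not factorize across the $n$ coordinates, so this step would fail.)

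Next I would prove that, for each $h_0\in H_0$, the sum of $n$ independent copies of $X_{h_0^*}^\Lambda$ weakly dominates the sum of $n$ independent copies of $X_{h_0}^\Lambda$. Let $A_1,\dots,A_n$ be independent copies of $X_{h_0^*}^\Lambda$ and $B_1,\dots,B_n$ independent copies of $X_{h_0}^\Lambda$, with $A_i$ weakly dominating $B_i$ for each $i$ by the $n=1$ hypothesis. Set $S_k=A_1+\dots+A_k+B_{k+1}+\dots+B_n$; it suffices to show $\Pr(S_k\ge t)\ge\Pr(S_{k-1}\ge t)$ for all $k$ and $t$. Conditioning on $R:=A_1+\dots+A_{k-1}+B_{k+1}+\dots+B_n$, which is independent of both $A_k$ and $B_k$, this reduces to $\Pr(A_k\ge t-r)\ge\Pr(B_k\ge t-r)$ for every realized value $r$ of $R$ --- precisely weak dominance of $A_k$ over $B_k$. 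Telescoping over $k$ gives $\Pr(S_n\ge t)\ge\Pr(S_0\ge t)$, i.e.~the desired dominance for the $n$-fold log-likelihood-ratio variables.

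Finally I would close by invoking Lemma~\ref{lem:dom} in the reverse direction for the $n$-fold model: $\supp(\Lambda)=\{h_0^*\}$ is unchanged, and the previous step establishes exactly the dominance condition for all $h_0\in H_0$, so $\Lambda$ is a uniformly least favorable distribution for $H_0$ vs.~$h_1$ under $\mm=(\ms^n,\Theta,\vec\pi)$. I do not expect any real obstacle here; the one step worth stating carefully is the convolution-preserves-dominance claim, handled by the hybrid argument above, together with the bookkeeping that $\log\ratio^{(n)}_{\Lambda,h_1}$ under $\pi_{h_0}^n$ is genuinely an i.i.d.~sum.
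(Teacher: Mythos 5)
Your proposal is correct and follows essentially the same route as the paper's proof: reduce to Lemma~\ref{lem:dom}, observe that the $n$-sample log-likelihood-ratio variable is an i.i.d.~sum of the single-sample one (which is where determinism of $\Lambda$ is used), and conclude via preservation of first-order stochastic dominance under independent sums. The only difference is cosmetic: the paper cites a reference for the convolution-preserves-dominance step, whereas you supply the short hybrid/conditioning argument explicitly.
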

 {All missing proofs can be found in the appendix.}
\Omit{
\begin{proof}  Let $\supp(\Lambda)=\{h_0^*\}$.  For any $n\in\mathbb N$ and any $h_0\in H_0$, we define a random variable $X_{n,h_0}:\ms^n\ra \mathbb R$, where for any $P_n\in\ms^n$, $\Pr(P_n)=\pi_{h_0}(P_n)=\prod_{V\in P_n}\pi_{h_0}(V)$, and $X_{n,h_0}(P_n)=\log \ratio_{h_0^*,h_1}$. It follows that $X_{n,h_0}=\underbrace{X_{h_0}+ X_{h_0}+ \cdots+ X_{h_0}}_n$. By Lemma~\ref{lem:dom},  for any $h_0\in H_0$, $X_{h_0^*}$ weakly dominates $X_{h_0}$. Because first-order stochastic dominance is preserved under convolution~\citep{Deelstra14:Risk}, we have that $X_{n,h_0^*}$ weakly dominates $X_{n,h_0}$. The lemma follows after applying Lemma~\ref{lem:dom}.\end{proof}

\vspace{2mm}
\noindent{\bf Remarks.} Lemma~\ref{lem:ext} is an extension of Theorem 2.3 by Reinhardt~\citet{Reinhardt1961:The-Use-of-Least} to finite models. Reinhardt's theorem requires that for any constant $t$, with measure $0$ we have $\pi_{h_0^*}(P)=t\pi_{h_1}(P)$. This is an important assumption in Reinhardt's proof because 
it assumes away cases with $\ratio(P)=k_\alpha$ so that the most powerful test is deterministic. Unfortunately, this assumption does not hold for finite models and we must deal with randomized tests.
}

\section{\MakeUppercase{UMP Tests for Mallows}}
\label{sec:mallows}
In this section, we present results on UMP non-winner and winner tests for Mallows' model.





\noindent{\bf Non-Winner Tests for Mallows.} The first theorem (Theorem~\ref{thm:winnercvsm}) of this subsection is a warmup, whose main goal is to define a test $f_{\alpha,a,B}$ that is UMP for any simple $H_1$ that consists in a linear order where $a$ is not ranked at the top. The main theorem of this section is Theorem~\ref{thm:winnercvsm}, which characterizes all UMP non-winner tests for arbitrary choices $H_1$.

For any profile $P$, any $B\subset \ma$, and any $a\in (\ma-B)$, we let $w_P(B\succ a)=\sum_{b\in B}w_P(b\succ a)$, that is, the total weights on edges from $B$ to $a$ in $\wmg(P)$.
\begin{thm}\label{thm:winnercvsm} {\bf (A most powerful non-winner test for Mallows)} Given a  Mallows' model $\mallows$, for any alternative $a$, any ranking $h_1$ where $a$ is not ranked at the top, any $0<\alpha<1$, and any $n$, the following test is a level-$\alpha$ UMP for testing $\atopr$ vs.~$h_1$. For any $n$-profile $P_n$,
$$f_{\alpha,a,B}(P_n)=\left\{\begin{array}{ll}1&\text{if }w_{P_n}(B\succ a)>K_\alpha\\
0&\text{if }w_{P_n}(B\succ a)<K_\alpha\\ 
\Gamma_\alpha&\text{if }w_{P_n}(B\succ a)=K_\alpha
\end{array}\right.,$$ where $B$ is the set of alternatives ranked above $a$ in $h_1$, and $K_\alpha$ and $\Gamma_\alpha$ are chosen s.t.~the size of $f_{\alpha,a,B}$ is $\alpha$. 
\end{thm}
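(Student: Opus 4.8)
The plan is to apply Lemma~\ref{lem:hvs} with a \emph{deterministic} least favorable distribution. Let $B\subseteq\ma\setminus\{a\}$ be the set of alternatives ranked above $a$ in $h_1$, and let $h_0^*$ be the ranking obtained from $h_1$ by moving $a$ to the top while keeping the relative order of all other alternatives; thus $h_0^*\in\atopr=H_0$. The rankings $h_1$ and $h_0^*$ disagree exactly on the pairs $\{a,b\}$ with $b\in B$, and on each such pair $h_1$ has $b\succ a$ while $h_0^*$ has $a\succ b$. Hence for any $n$-profile $P_n$, using $\pi_W(P_n)\propto\varphi^{\kt(P_n,W)}$ with $\kt(P_n,W)=\sum_{V\in P_n}\kt(V,W)$, one gets $\kt(P_n,h_1)-\kt(P_n,h_0^*)=-w_{P_n}(B\succ a)$ and therefore $\ratio_{h_0^*,h_1}(P_n)=\varphi^{-w_{P_n}(B\succ a)}$. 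Since $0<\varphi<1$, this is a strictly increasing function of $w_{P_n}(B\succ a)$, so taking $\Lambda$ to be the point mass at $h_0^*$, the likelihood ratio test $\lr_{\alpha,\Lambda,h_1}$ coincides with $f_{\alpha,a,B}$ once $K_\alpha,\Gamma_\alpha$ are chosen to make its size equal $\alpha$.

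It remains to check that $\Lambda=\delta_{h_0^*}$ is a least favorable distribution, i.e.\ conditions (i)--(ii) of Lemma~\ref{lem:hvs}; equivalently, by Lemma~\ref{lem:dom} (and since $\Lambda$ is supported on the single point $h_0^*$), it suffices that $\log\ratio_{h_0^*,h_1}$ under $\pi_{h_0}$ be weakly first-order stochastically dominated by $\log\ratio_{h_0^*,h_1}$ under $\pi_{h_0^*}$ for every $h_0\in\atopr$. Because $\log\ratio_{h_0^*,h_1}(P_n)=(-\log\varphi)\,w_{P_n}(B\succ a)$ with $-\log\varphi>0$, this is the same as: for every $h_0$ with $a$ ranked at the top, $w_{P}(B\succ a)$ under $\pi_{h_0^*}$ weakly dominates $w_{P}(B\succ a)$ under $\pi_{h_0}$. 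Since $\delta_{h_0^*}$ is deterministic, Lemma~\ref{lem:ext} reduces this to the case $n=1$ and simultaneously yields the claim for all $n$.

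To prove the $n=1$ dominance I would proceed as follows. First, $w_V(B\succ a)=2\,\bigl|\{b\in B:b\succ_V a\}\bigr|-|B|$, so it is enough to compare the count $N_B(V):=\bigl|\{b\in B:b\succ_V a\}\bigr|$. Second, relabelling automorphisms of Mallows' model that fix $a$ and fix the set $B$ setwise show that the law of $N_B$ under $\pi_{h_0}$ depends on $h_0$ only through the positions of the $B$-alternatives relative to $a$ --- the internal orders within $B$ and within $\ma\setminus(B\cup\{a\})$ are irrelevant --- so $h_0^*$ is the canonical representative of ``$B$ ranked as high as possible just below $a$''. Third, from an arbitrary $h_0\in\atopr$ one reaches $h_0^*$ by a finite sequence of reference rankings, each obtained from the previous one by swapping some $b\in B$ with the alternative immediately above it (which lies below $a$ and is not in $B$), and along the way $a$ stays on top. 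Thus the theorem follows once we show that such a single swap, from a reference $W$ to $W'$, only increases $N_B$ in the stochastic order, i.e.\ $N_B$ under $\pi_{W'}$ weakly dominates $N_B$ under $\pi_W$. This single-swap monotonicity is the crux; I would establish it via the repeated-insertion representation of Mallows' model, where the probability that a newly inserted alternative lands above $a$ is an explicit ratio of partial geometric sums that is monotone both in the insertion order and in the number of already-placed alternatives above $a$ (these are the structural properties of Mallows proved in the appendix, e.g.\ Lemma~\ref{lem:CC'} and Lemma~\ref{lem:bordadom}). The main obstacle is exactly this step: the swap perturbs both a $B$-alternative and a non-$B$ alternative, so the coupling (or the insertion-probability bookkeeping) must be set up carefully enough to conclude that the net effect on $N_B$ is monotone.
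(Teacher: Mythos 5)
Your overall architecture coincides exactly with the paper's: take $\Lambda$ to be the point mass at $h_0^*$ (the ranking obtained from $h_1$ by raising $a$ to the top), observe that $\ratio_{h_0^*,h_1}(P_n)$ is a monotone function of $w_{P_n}(B\succ a)$ so that $\lr_{\alpha,h_0^*,h_1}=f_{\alpha,a,B}$, invoke Lemma~\ref{lem:dom} to reduce least-favorability to a first-order stochastic dominance statement about $w_P(B\succ a)$, and use Lemma~\ref{lem:ext} to reduce to $n=1$. All of that is correct and is precisely what the paper does. The issue is the step you yourself flag as the crux: the claim that an adjacent swap in the reference ranking (moving some $b\in B$ up past a non-$B$ alternative, both below $a$) can only push $N_B(V)=|\{b\in B:b\succ_V a\}|$ upward in the stochastic order. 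You do not prove this; you gesture at a repeated-insertion argument whose ``insertion-probability bookkeeping'' you acknowledge is delicate, since the swap perturbs a $B$-alternative and a non-$B$-alternative simultaneously and the later insertion steps are coupled to the earlier ones through the position of $a$. As written, the proposal therefore has a genuine gap at the one place where a real argument is needed.

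The paper closes this gap without any insertion representation, by a move you are in fact one observation away from. Instead of comparing the fixed set $B$ under two reference rankings $h_0$ and $h_0^*$, apply the permutation $M$ with $M(h_0)=h_0^*$ and $M(a)=a$: permutation invariance of $\kt$ gives $\pi_{h_0}(\{P:w_P(B\succ a)\ge K\})=\pi_{h_0^*}(\{P:w_P(B'\succ a)\ge K\})$ with $B'=M(B)$, converting the problem into comparing \emph{two sets} $B$ and $B'$ under the \emph{single} reference ranking $h_0^*$, where $B$ occupies the top $|B|$ non-$a$ positions and hence dominates any $B'$. The resulting inequality is Lemma~\ref{lem:CC'}, proved for the one-element-exchange case ($C\setminus C'=\{c\}$, $C'\setminus C=\{c'\}$, $c\succ_W c'$) by an explicit bijection on profiles: map each $P$ with $c\succ_P a\succ_P c'$ to the profile obtained by transposing $c$ and $c'$, note this bijects $\mcp\setminus\mcp'$ onto $\mcp'\setminus\mcp$ and can only decrease $\pi_W$, and conclude $\pi_W(\mcp)\ge\pi_W(\mcp')$. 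Your single-swap claim is, after applying the very relabelling invariance you already invoke, exactly this one-element case; so the elementary bijection argument is the missing piece, and no coupling of insertion processes is required.
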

\begin{proof}  The proof proceeds by identifying a  uniformly least favorable distribution for $H_0=\atopr$ vs.~$h_1$. In fact, let $B$ denote the set of alternatives ranked above $a$ in $h_1$. Let $h_0^*$ denote the ranking that is obtained from $h_1$ by raising $a$ to the top position. We will prove that the deterministic distribution $\Lambda$ at $\{h_0^*\}$ is a uniformly least favorable distribution.

Let $\lr_\alpha$ denote $\lr_{\alpha,h_0^*,h_1}$ and let $\ratio$ denote $\ratio_{h_0^*,h_1}$. Recall that both are defined in Definition~\ref{dfn:lrtest}. We first prove the theorem for $n=1$. 
By Lemma~\ref{lem:dom}, it suffices to prove that for any $h_0\in H_0$, $X_{h_0^*}^\Lambda$ weakly dominates $X_{h_0}^\Lambda$.
For any ranking $V$ and any pair of alternatives $b,c$, we let $I(b\succ_V c)=1$ if $b\succ_V c$, otherwise $I(b\succ_V c)=0$.  For any single-vote profile $P=\{V\}$, we have:
\begin{align*}
&\log \ratio(P)=(\kt(V,h_1)-\kt(V,h_0^*))\log \varphi \\
=&\log \varphi \sum_{c\succ_V d} (I(d\succ_{h_1} c)-I(d\succ_{h_0^*} c))\\
=&\log \varphi (\sum_{b\in B: a\succ_V b} (I(b\succ_{h_1} a)-I(b\succ_{h_0^*} a))\\
&+\sum_{b\in B: b\succ_V a } (I(a\succ_{h_1} b)-I(a\succ_{h_0^*} b)))\\
=&\log \varphi\cdot (|B|-2w_{P}(B\succ a))
\end{align*}  
Therefore, to prove that $X_{h_0^*}^\Lambda$ weakly dominates $X_{h_0}^\Lambda$, it suffices to prove for any $K\in\mathbb Z$, 

\resizebox{1\linewidth}{!}{$
\hfill\pi_{h_0}(\{P: w_{P}(B\succ a)\geq K\})\leq \pi_{h_0^*}(\{P: w_{P}(B\succ a)\geq K\})\hfill$}

Let $M$ denote the permutation over $\ma$ such that $M(h_0)=h_0^*$. Because $h_0\in H_0=\atopr$, we have $M(a)=a$. Let $B'=M(B)$. Because  Kendall-Tau distance is invariant to permutations, for any $P\in\ml(\ma)$ we have $\pi_{h_0}(P)=\pi_{M(h_0)}(M(P))$ and 
\begin{align*}
&\pi_{h_0}(\{P: w_{P}(B\succ a)\geq K\})\\
=&\pi_{M(h_0)}(\{M(P): w_{M(P)}(M(B)\succ M(a))\geq K\})\\
=&\pi_{h_0^*}(\{M(P): w_{M(P)}(B'\succ M(a))\geq K\})\\
=&\pi_{h_0^*}(\{P: w_{P}(B'\succ a)\geq K\})
\end{align*}
Therefore, it suffices to prove that $
\pi_{h_0^*}(\{P: w_{P}(B'\succ a)\geq K\})\leq \pi_{h_0^*}(\{P: w_{P}(B\succ a)\geq K\})$. 
We will prove a stronger lemma. 
Given any $W\in\ml(\ma)$ and $C', C\subseteq\ma$ with $C\ne C'$ and $|C|=|C'|$, we say that $C$ {\em dominates} $C'$ w.r.t.~$W$ if there exists a one-one mapping $F:(C- C')\ra (C'- C)$ such that for all $c\in C$ we have $c\succ_{W}F(c)$. In words, $C'$ can be obtained from $C$ by lowering some alternatives according to $W$.
 
\begin{lem}\label{lem:CC'} Under a Mallows' model, for any $\varphi$, any $K\in \mathbb N$, any $a\in \ma$, any $W\in\ml(\ma)$, and any $C',C\subseteq\ma$ such that $C$ dominates $C'$ w.r.t.~$W$, we have $\pi_{W}(\{P:w_P(C'\succ a)\geq K\})\leq \pi_{W}(\{P:w_P(C\succ a)\geq K\})$.
\end{lem}

It follows from Lemma~\ref{lem:CC'} that $X_{h_0^*}^\Lambda$ weakly dominates $X_{h_0}^\Lambda$, which means that $\Lambda$ is a uniformly least favorable distribution for $n=1$ by Lemma~\ref{lem:dom}. We note that $\Lambda$ is deterministic. Therefore, by Lemma~\ref{lem:ext}, $\Lambda$ is also a uniformly least favorable distribution for Mallows' model with any $n\in\mathbb N$, which means that the corresponding likelihood ratio test $\lr_\alpha$ is most powerful.  It is not hard to verify that $\lr_\alpha=f_{\alpha,a,B}$.  Moreover, because $\Lambda$ is deterministic, any most powerful test $f$ for $H_0$ vs.~$h_1$ must also be most powerful for the simple vs.~simple test ($h_0^*$ vs.~$h_1$). By the Neyman-Pearson lemma (Lemma~\ref{lem:NP}),  $f$ must agree with $\lr_\alpha$ except on $P_n$ such that $\ratio(P_n)=k_\alpha$, which corresponds to $P_n$ with $w_{P_n}(B\succ a)=K_\alpha$. 
\end{proof}

\Omit{
\begin{sketch} The proof proceeds by identifying a  uniformly least favorable distribution for $H_0=\atopr$ vs.~$h_1$. In fact, let $B$ denote the set of alternatives ranked above $a$ in $h_1$. Let $h_0^*$ denote the ranking that is obtained from $h_1$ by raising $a$ to the top position. 

After careful calculation, we can show that the deterministic distribution $\Lambda$ at $\{h_0^*\}$ is a uniformly least favorable distribution by showing $X_{h_0^*}^\Lambda$ weakly dominates $X_{h_0}^\Lambda$ and then invoking Lemma~\ref{lem:dom}. Then, the theorem holds for all $n\ge 2$ after applying Lemma~\ref{lem:ext}.
\end{sketch}

\vspace{2mm}
\noindent{\bf Remarks.} We note that $f_{\alpha,a,B}$ does not depend on the ordering among alternatives in $B$ in $h_1$.  $f_{\alpha,a,B}$ is computed in the following way for any given profile $P_n$: we first build the weighted majority graph, then calculate the total weight $w_{P_n}(B\succ a)$ of all edges from $B$ to $a$. If the total weight is more than a threshold $K_\alpha$, then $H_0$ is rejected; if the total weight is less than $K_\alpha$, then $H_0$ is retained; otherwise $H_0$ is rejected with probability $\Gamma_\alpha$. This procedure is intuitive because a larger $w_{P_n}(B\succ a)$ corresponds to more evidence from the data that $a$ should be ranked below $B$, which means that it is less likely that the ground truth is in $H_0$, where $a$ is ranked above $B$.  $w_{P_n}(B\succ a)$ is called the {\em test statistic} and is easy to compute. The threshold $K_\alpha$ and the value $\Gamma_\alpha$ might  be hard to compute. In practice such $K_\alpha$ and $\Gamma_\alpha$ are pre-computed as a look-up table, and once $w_{P_n}(B\succ a)$ is computed, we can immediately obtain its $p$-value, which is the smallest $\alpha$ such that $K_\alpha=w_{P_n}(B\succ a)$. %
}

Theorem~\ref{thm:winnercvsm} can be extended to the following characterization of all UMP non-winner tests ($H_0=\atopr$) for Mallows' model. For any $B\subset \ma$ and $a\in (\ma\setminus B)$, we let $L_{B\succ a}\subseteq \ml(\ma)$ denote the set of all rankings where the set of alternatives ranked above $a$ is exactly $B$. For example, when $m=4$, $L_{\{c\}\succ a}=\{[c\succ a\succ b\succ d], [c\succ a\succ d\succ b]\}$.

\begin{thm} \label{thm:tvcm} {\bf (Characterization of UMP non-winner tests for Mallows)} Given  a Mallows' model $\mallows$  with $m\ge 2$ and $n\ge 2$, there exists a UMP test for $H_0=\atopr$ vs.~$H_1$ for all $0<\alpha<1$ if and only if there exists $B\subseteq \ma$ such that $H_1\subseteq L_{B\succ a}$. 

Moreover, when $H_1\subseteq L_{B\succ a}$, we have that $f_{\alpha,a,B}$ as defined in Theorem~\ref{thm:winnercvsm} is a UMP test.
\end{thm}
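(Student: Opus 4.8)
The plan is to prove both directions, the ``if'' being immediate and the ``only if'' being the real content.

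\textbf{The ``if'' direction.} Suppose $H_1\subseteq L_{B\succ a}$. Since $H_0=\atopr$ and $H_0\cap H_1=\emptyset$ we have $B\ne\emptyset$, so every $h_1\in H_1$ is a ranking in which $a$ is not at the top and the set of alternatives above $a$ is exactly $B$. By Theorem~\ref{thm:winnercvsm}, for each such $h_1$ the test $f_{\alpha,a,B}$ is a level-$\alpha$ most powerful test for $\atopr$ vs.\ $h_1$. The point is that $f_{\alpha,a,B}$ --- its statistic $w_{P_n}(B\succ a)$ together with the constants $K_\alpha,\Gamma_\alpha$, which are pinned down by the size constraint over $H_0=\atopr$ and hence do not see $h_1$ --- depends only on $B,a,\alpha,n$. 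So the single test $f_{\alpha,a,B}$ is simultaneously most powerful at every $h_1\in H_1$, i.e.\ a level-$\alpha$ UMP test for $\atopr$ vs.\ $H_1$, for every $0<\alpha<1$; this is also the ``Moreover'' clause.

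\textbf{The ``only if'' direction --- setup.} I would argue by contraposition: if no single $B$ works, then $H_1$ meets two distinct blocks of the partition $\{L_{B\succ a}\}_B$ of $\ml(\ma)$, so there are $h_1,h_1'\in H_1$ whose above-$a$ sets are $B\ne B'$ (both nonempty). Write $T_B(P):=w_P(B\succ a)$, $T_{B'}(P):=w_P(B'\succ a)$. Because the least favorable distributions in the proof of Theorem~\ref{thm:winnercvsm} are \emph{deterministic}, the Neyman--Pearson Lemma forces any level-$\alpha$ most powerful test for $\atopr$ vs.\ $h_1$ to agree with $f_{\alpha,a,B}$ off the tie set $\{T_B=K_\alpha\}$, and likewise one for $h_1'$ to agree with $f_{\alpha,a,B'}$ off $\{T_{B'}=K'_\alpha\}$. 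A level-$\alpha$ UMP test is most powerful at both $h_1$ and $h_1'$, so $f_{\alpha,a,B}$ and $f_{\alpha,a,B'}$ would have to agree off $\{T_B=K_\alpha\}\cup\{T_{B'}=K'_\alpha\}$. Hence it suffices to find a level $\alpha\in(0,1)$ and an $n$-profile $P$ with $T_B(P)>K_\alpha$ and $T_{B'}(P)<K'_\alpha$ (both strict): then $f_{\alpha,a,B}(P)=1\ne0=f_{\alpha,a,B'}(P)$ with $P$ outside both tie sets, so no UMP test exists at level $\alpha$.

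\textbf{Constructing the witness.} This is where $n\ge 2$ is used. First I would produce a \emph{non-monotone} pair of $n$-profiles $P,Q$ with $T_B(P)>T_B(Q)$ but $T_{B'}(P)<T_{B'}(Q)$: fixing $c\in B\setminus B'$ (WLOG, swapping $B,B'$ otherwise) and taking profiles made of copies of the rankings that place $B\setminus B'$ (resp.\ $B'\setminus B$, or a single element of $B'$ when $B'\subsetneq B$) just above $a$ and everything else below $a$, a short weighted-majority computation yields the two opposite inequalities (for $n=1$ such a pair need not exist, e.g.\ when $B=B'\cup\{c\}$ --- which is why the hypothesis is $n\ge2$). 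Since every $\pi_{h_0}$ gives positive mass to every profile, the map $s\mapsto\max_{h_0\in\atopr}\pi_{h_0}(T_B\ge s)$ is \emph{strictly} decreasing on the value set of $T_B$. Setting $\alpha_1=\max_{h_0}\pi_{h_0}(T_B\ge T_B(P))$, $\alpha_2=\max_{h_0}\pi_{h_0}(T_{B'}>T_{B'}(P))$, $\alpha_3=\max_{h_0}\pi_{h_0}(T_B>T_B(Q))$, $\alpha_4=\max_{h_0}\pi_{h_0}(T_{B'}\ge T_{B'}(Q))$, the witness $P$ works for $\alpha\in(\alpha_1,\alpha_2]$ and the witness $Q$ works for $\alpha\in(\alpha_4,\alpha_3]$, so I only need $\alpha_1<\alpha_2$ or $\alpha_4<\alpha_3$. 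Inclusions give $\alpha_1\le\alpha_3$ and $\alpha_4\le\alpha_2$, so if both intervals were empty a four-term cycle forces $\alpha_1=\alpha_3$ and $\alpha_2=\alpha_4$; by the strict monotonicity this forces $T_B(P)-T_B(Q)=T_{B'}(Q)-T_{B'}(P)=2$. Thus whenever the non-monotone pair can be chosen with one gap $\ge4$ --- which I expect in every case except $n=2$ with $|B\triangle B'|=1$ --- we are done at once.

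\textbf{Main obstacle.} The residual case is $n=2$ with (say) $B=B'\cup\{c\}$, where \emph{every} non-monotone pair has both gaps equal to $2$; there one must directly rule out $\max_{h_0}\pi_{h_0}(T_B\ge t)=\max_{h_0}\pi_{h_0}(T_{B'}\ge t)$ at the relevant threshold. I would establish this via a stochastic-domination property of Mallows' model in the spirit of Lemma~\ref{lem:CC'} (and presumably the paper's Lemma~\ref{lem:bordadom}): for a center with $a$ at the top, $w_P(B'\cup\{c\}\succ a)$ is stochastically dominated by $w_P(B'\succ a)$, strictly enough --- because of positive mass on intermediate profiles --- to separate the two tail probabilities. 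Proving this domination and checking that the separation is uniform over $0<\varphi<1$ is the delicate step; the rest is bookkeeping with the Neyman--Pearson characterization and weighted majority graphs.
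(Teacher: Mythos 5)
Your ``if'' direction is correct and is essentially the paper's: $f_{\alpha,a,B}$ depends on $h_1$ only through $B$, so it is simultaneously most powerful at every $h_1\in L_{B\succ a}$. The problem is the ``only if'' direction, which as written has a genuine gap. Your reduction leaves a residual case that you explicitly do not resolve: when both ``gaps'' are minimal (your $T_B(P)-T_B(Q)=T_{B'}(Q)-T_{B'}(P)=2$ situation), you need a \emph{cross-statistic} tail comparison of the form $\max_{h_0}\pi_{h_0}(w_P(B\succ a)\ge t)\ne\max_{h_0}\pi_{h_0}(w_P(B'\succ a)\ge t)$ at a particular threshold, and you only conjecture a stochastic-domination lemma that would deliver it, noting yourself that proving it ``uniformly over $0<\varphi<1$ is the delicate step.'' That step is the crux, not bookkeeping: the two maxima are attained at different $h_0^*$'s and the statistics have different ranges, so nothing in Lemma~\ref{lem:CC'} or Lemma~\ref{lem:bordadom} (which compare tails of \emph{one} statistic under \emph{one} center) applies directly. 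In addition, the claim that this residual case arises only for $n=2$ with $|B\triangle B'|=1$ is stated as an expectation, and the construction of the non-monotone pair itself is not carried out --- e.g.\ when $B'\subsetneq B$ with $|B'|>|B\setminus B'|$, the natural ``block'' profiles give $T_B(P)<T_B(Q)$, so the ``short weighted-majority computation'' does not automatically produce the two opposite strict inequalities.

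The paper avoids all of this by choosing one extreme level rather than analyzing generic levels: it sets $\alpha$ so that $K_\alpha=n|B|-0.5$ with $\Gamma_\alpha=0$, making the rejection region of $f_{\alpha,a,B}$ exactly the unanimity event $\{w_{P_n}(B\succ a)=n|B|\}$ with an empty tie set, so the most powerful test for $H_0$ vs.\ $h_1^1=[B\succ a\succ\others]$ is \emph{unique and deterministic} at that level. It then takes $P_n^*$ ($n$ copies of $[B\succ a\succ\others]$, which must be rejected) to upper-bound the threshold $K_\alpha'$ of $f_{\alpha,a,B'}$, and exhibits a second profile $P_n'$ whose $B'$-statistic strictly exceeds that bound (this is where $n\ge2$ enters, to get strictness) while its $B$-statistic is $n|B|-1<n|B|$; hence any UMP test would have to both reject and retain at $P_n'$. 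If you want to salvage your argument, the cleanest fix is to abandon the generic-$\alpha$ analysis and adopt this single-threshold construction, which sidesteps both the residual case and the need for any new domination lemma.
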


\begin{ex}\label{ex:umpmallows}\rm Let $P_7$ denote the profile mentioned in the beginning of Introduction. Suppose we want to test whether there is enough evidence to claim that $a$ cannot be the winner. We can apply a non-winner test on $a$ by letting $H_0= L_{a\succ\text{others}}$ and $H_1= L_{\text{others}\succ a}$. By Theorem~\ref{thm:tvcm}, $f_{\alpha,a,B}$ is a UMP test, where $B = \{b,c\}$. The test can be done by computing the test statistic ${\calT} = w_{P_7}(B\succ a) = -2$, and then checking if $\calT$ is in the critical region $(K_\alpha,\infty)$ for some pre-computed $K_\alpha$. If $\calT\in (K_\alpha,\infty)$, then $H_0$ is rejected, which means that $a$ should not be chosen as the winner. If $\calT = K_\alpha$, then $H_0$ is rejected with probability $\Gamma_\alpha$. Otherwise $H_0$ cannot be rejected, meaning that there is not enough evidence to claim that $a$ cannot be the winner.
 \hfill$\blacksquare$
\end{ex}

\noindent{\bf Winner Tests for Mallows.} We now consider UMP winner tests under Mallows' model ($H_1=\atopr$) for two natural choices of $H_0$: $H_0=L_{\others\succ a}$ in Theorem~\ref{thm:winnerbottomtopm}, which means that $a$ is ranked in the bottom in the ground truth, and $H_0=(\ml(\ma)- H_1)$ in Theorem~\ref{thm:mallowswinnernonex} and~\ref{thm:mallowswinnerexist}, which means that $a$ is not ranked at the top in the ground truth. 

\begin{thm}[\bf A UMP winner test under Mallows]\label{thm:winnerbottomtopm} Given a  Mallows' model $\mallows$, for any alternative $a$, any $0<\alpha<1$, and any $n$, the following test is a level-$\alpha$ UMP for testing $H_0=L_{\others\succ a}$ vs.~$H_1=\atopr$. For any $n$-profile $P_n$,
$$f_{\alpha,a}(P_n)=\left\{\begin{array}{cc}1&\text{if }w_{P_n}( a\succ \others)>K_\alpha\\
0&\text{if }w_{P_n}(a\succ\others)<K_\alpha\\
\Gamma_\alpha&\text{if }w_{P_n}(a\succ \others)=K_\alpha
\end{array}\right.,$$
where $K_\alpha$ and $ \Gamma_\alpha$ are chosen s.t.~the size of $f_{\alpha,a}$ is $\alpha$.
\end{thm}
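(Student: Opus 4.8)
The plan is to mirror the structure of the proof of Theorem \ref{thm:winnercvsm}, but with the roles of "null" and "alternative" exchanged. Here $H_1 = \atopr$ is composite and $H_0 = L_{\others\succ a}$ is also composite, so a single application of Lemma \ref{lem:hvs} will not suffice directly. Instead I would first reduce to a simple $H_1$: by the Neyman–Pearson machinery a UMP test for $H_0$ vs.\ $H_1$ exists and equals a fixed test $f$ provided $f$ is most powerful for $H_0$ vs.\ $h_1$ \emph{for every} $h_1\in\atopr$, and moreover has size exactly $\alpha$ on all of $H_0$. So the goal becomes: exhibit, for each fixed $h_1$ with $a$ ranked at the top, a uniformly least favorable distribution $\Lambda$ over $H_0 = L_{\others\succ a}$ such that the induced likelihood ratio test is the single test $f_{\alpha,a}$ above, independent of which $h_1\in\atopr$ we chose.

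The natural candidate for $\Lambda$, by analogy with Theorem \ref{thm:winnercvsm}, is the deterministic distribution supported on $h_0^* := $ the ranking obtained from $h_1$ by pushing $a$ down to the bottom position (keeping the relative order of $\ma\setminus\{a\}$ as in $h_1$). The key computation is the same Kendall-tau bookkeeping as before: for a single vote $P=\{V\}$,
\begin{align*}
\log\ratio_{h_0^*,h_1}(P) &= (\kt(V,h_1)-\kt(V,h_0^*))\log\varphi \\
&= \log\varphi\cdot\big((m-1) - 2\,w_P(a\succ\others)\big),
\end{align*}
so that the likelihood ratio is a strictly decreasing function of the test statistic $w_P(a\succ\others)$ (note $\log\varphi<0$), and hence $\lr_{\alpha,h_0^*,h_1}$ is exactly $f_{\alpha,a}$ with the stated thresholds. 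By Lemma \ref{lem:dom} it then remains to show that for every $h_0\in L_{\others\succ a}$, the random variable $X_{h_0^*}^\Lambda$ weakly dominates $X_{h_0}^\Lambda$; equivalently, using the displayed identity, that for every $K\in\mathbb Z$,
$$
\pi_{h_0}\big(\{P: w_P(a\succ\others)\ge K\}\big) \le \pi_{h_0^*}\big(\{P: w_P(a\succ\others)\ge K\}\big).
$$
As in Theorem \ref{thm:winnercvsm} I would apply the permutation $M$ with $M(h_0)=h_0^*$; since $a$ is at the bottom in both $h_0$ and $h_0^*$ we get $M(a)=a$, and invariance of $\kt$ under permutations reduces the inequality to a statement purely about $\pi_{h_0^*}$, namely $\pi_{h_0^*}(\{P: w_P(a\succ C')\ge K\}) \le \pi_{h_0^*}(\{P: w_P(a\succ C)\ge K\})$ for appropriate $C,C'$ with $C$ dominating $C'$ w.r.t.\ $h_0^*$. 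This is the dual form of Lemma \ref{lem:CC'} (replace "$C$ above $a$" by "$a$ above $C$", equivalently flip the WMG orientation), which follows from Lemma \ref{lem:CC'} applied to the reversed order, or can be proved by the identical coupling argument. Finally, Lemma \ref{lem:ext} lifts the result from $n=1$ to all $n$, since $\Lambda$ is deterministic.

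The main obstacle I expect is twofold. First, one must check that the \emph{same} deterministic $\Lambda$—or rather, that $f_{\alpha,a}$ with $\Lambda$ depending on $h_1$ only through its support point $h_0^*$—yields a test whose rejection region and thresholds $(K_\alpha,\Gamma_\alpha)$ do not depend on the choice of $h_1\in\atopr$; this is what makes the test uniformly most powerful rather than merely most powerful against each $h_1$ separately. This works precisely because $\log\ratio_{h_0^*,h_1}$ depends on $V$ only through $w_P(a\succ\others)$, a quantity that ignores the internal order of $\ma\setminus\{a\}$. Second, condition (ii) of Lemma \ref{lem:hvs} requires $\size(f_{\alpha,a},h_0)\le\alpha$ for \emph{all} $h_0\in H_0$, not just those in $\supp(\Lambda)$; this is exactly the weak-dominance inequality above, and verifying it rests on getting the direction of the dominance in Lemma \ref{lem:CC'} right once the WMG is reoriented. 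Once those two points are nailed down, uniqueness (agreement with $f_{\alpha,a}$ off the tie set $w_{P_n}(a\succ\others)=K_\alpha$) follows from the Neyman–Pearson lemma exactly as in Theorem \ref{thm:winnercvsm}, since $\Lambda$ is deterministic.
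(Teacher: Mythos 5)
Your proposal is correct and takes essentially the same approach as the paper: the same deterministic least favorable distribution concentrated at $h_0^*$ (the ranking obtained from $h_1$ by pushing $a$ to the bottom while preserving the order of $\ma\setminus\{a\}$), the same identification of the resulting likelihood ratio test with $f_{\alpha,a}$, and the same use of permutation invariance over $\ma\setminus\{a\}$ to verify the size conditions of Lemma~\ref{lem:hvs}. Two small remarks: the ``dual Lemma~\ref{lem:CC'}'' step you anticipate is vacuous here, since the relevant set is all of $\ma\setminus\{a\}$ and is fixed by the permutation $M$, so the dominance inequality is already an equality---every $h_0\in L_{\others\succ a}$ has the same size, which is exactly the shortcut the paper takes without invoking Lemma~\ref{lem:dom} or any dominance lemma; and the likelihood ratio $\ratio_{h_0^*,h_1}$ is strictly \emph{increasing}, not decreasing, in $w_P(a\succ\others)$ (a vote ranking $a$ higher is more likely under $h_1$ than under $h_0^*$), which is precisely what makes $\lr_{\alpha,h_0^*,h_1}$ reject for large values of the statistic as $f_{\alpha,a}$ does.
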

\begin{proof} 
For any $h_1\in H_1$, we will prove that $f_{\alpha,a}$ is a most powerful level-$\alpha$ test. Let $h_0^*\in H_0$ denote the ranking that is obtained from $h_1$ by moving $a$ to the bottom position without changing the relative positions of the other alternatives. Like the proof of Theorem~\ref{thm:winnercvsm}, it is not hard to check that $f_{\alpha,a}$ is equivalent to the likelihood ratio test $\lr_{\alpha,h_0^*,h_1}$. 

Because $f_{\alpha,a}$ is invariant to permutations over $\ma\setminus\{a\}$, for any $h_0'\in H_0$ and any permutation $M$ over $\ma\setminus\{a\}$, we have $\size(f_{\alpha,a},h_0')=\size(f_{\alpha,a},M(h_0'))$. In particular, let $M$ denote the permutation such that $M(h_0')=h_0^*$. We have $\size(f_{\alpha,a},h_0')=\size(f_{\alpha,a},h_0^*)$. It follows from Lemma~\ref{lem:hvs} that $f_{\alpha,a}$ is most powerful, by letting $\Lambda$ to be the deterministic distribution on $\{h_0^*\}$. 
\end{proof}

\begin{ex}\label{ex:umpmallowswt}\rm Let us continue with the setting in Example~\ref{ex:umpmallows}. Suppose we want to test whether there is enough evidence to claim that $a$ is the winner. We can apply a winner test on $a$ by letting $H_0= L_{\text{others}\succ a}$ and $H_1= L_{a\succ\text{others}}$, i.e.~switching the roles of $H_0$ and $H_1$ in Example~\ref{ex:umpmallows}. By Theorem~\ref{thm:winnerbottomtopm}, $f_{\alpha,a}$ is a UMP test. The test can be done by computing the test statistic ${\calT} = w_{P_7}(a\succ\text{others}) = 2$, and then checking if $\calT$ is in the critical region $(K_\alpha^*,\infty)$ for some pre-computed $K_\alpha^*$. If $\calT\in (K_\alpha^*,\infty)$, then $H_0$ is rejected, which means that $a$ should be chosen as the winner. If $\calT = K_\alpha^*$, then $H_0$ is rejected with a pre-computed probability $\Gamma_\alpha^*$. Otherwise $H_0$ cannot be rejected, meaning that there is no enough evidence to claim that $a$ is the winner.
 \hfill$\blacksquare$
\end{ex}

The following two theorems identify conditions on $\varphi$ in Mallows' model for the UMP winner test  $H_0=(\ml(\ma)\setminus H_1)$ vs.~$H_1=\atopr$ when $n=1$.

\begin{thm}\label{thm:mallowswinnernonex} Let $\mallows$ denote a  Mallows' model with $n=1$, any $m\ge 4$, and any $\varphi< 1/m$. There exists $0<\alpha<1$ such that no level-$\alpha$ UMP test exists for $H_0=(\ml(\ma)- H_1)$ vs.~$H_1=\atopr$.
\end{thm}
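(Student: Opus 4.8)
The plan is to exhibit a level $\alpha\in(0,1)$ and two alternatives $h_1,h_1'\in H_1=\atopr$ whose most powerful level-$\alpha$ tests against the common null $H_0=\ml(\ma)\setminus H_1$ are incompatible, so that no test can be uniformly most powerful at level $\alpha$. Throughout $n=1$, so the sample space is $\ml(\ma)$ with $\pi_W(V)=\varphi^{\kt(V,W)}/Z$. The first move is a symmetrization reduction. Let $G=\mathrm{Sym}(\ma\setminus\{a\})$ act on $\ml(\ma)$ by relabeling; then $G$ fixes $H_0$ and $H_1$ setwise, acts transitively (indeed regularly) on $H_1$, and its orbits on $\ml(\ma)$ are exactly the $m$ ``position of $a$'' classes $C_1,\dots,C_m$, with $H_1=C_1$. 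If $f$ were a level-$\alpha$ UMP test, then $\bar f=\frac{1}{|G|}\sum_{\tau\in G}\tau\cdot f$ would also be a level-$\alpha$ UMP test — its size is at most $\alpha$ because $\size(\bar f,h_0)=\frac{1}{|G|}\sum_\tau \size(f,\tau^{-1}h_0)$, and its power at each $h_1\in H_1$ equals the common most-powerful power $\beta(\alpha)$ (common because $G$ is transitive on $H_1$ and preserves the model) — and $\bar f$ is $G$-invariant, i.e.\ a function of the position of $a$ only. So it suffices to find $\alpha$ for which no ``position of $a$'' test of level $\alpha$ has power $\beta(\alpha)$. The best such power is the value $\Pi^*(\alpha)$ of the finite linear program $\max\{\sum_{p}f_pQ(p\mid1):f\in[0,1]^m,\ \sum_{p}f_pQ(p\mid p_0)\le\alpha\ \forall p_0\ge2\}$, where $Q(p\mid p_0)=\Pr_{h_0}(a\ \text{at position}\ p)$ depends only on $p,p_0$; a short computation (raising or lowering $a$ in a representative ranking, by the same kind of argument as the weight identity in the proof of Theorem~\ref{thm:winnercvsm}) gives $Q(p\mid1)=\varphi^{\,p-1}Q(1\mid1)$ and $Q(1\mid1)=\pi_{h_1}(H_1)=Z_{m-1}/Z_m=\frac{1-\varphi}{1-\varphi^m}$, so $\Pi^*(\alpha)$ is completely explicit.

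It remains to show $\beta(\alpha)>\Pi^*(\alpha)$ for some $\alpha$, i.e.\ to beat every ``position of $a$'' test with a test tuned to a fixed $h_1$. Fix $h_1=[a\succ b_1\succ b_2\succ\cdots\succ b_{m-1}]$ and $h_1'=[a\succ b_2\succ b_1\succ b_3\succ\cdots\succ b_{m-1}]$, and let $V_1=[b_1\succ a\succ b_2\succ\cdots\succ b_{m-1}]\in H_0$, the unique element of $H_0$ at Kendall-tau distance $1$ from $h_1$. For $\alpha$ slightly above $Q(1\mid2)=\varphi\,Q(1\mid1)$, the most powerful test against $h_1$ rejects all of $H_1$ and then some rankings with $a$ in position $2$; among those, it prefers the ones aligned with $h_1$, namely $V_1$ together with the $m-3$ position-$2$ rankings at Kendall-tau distance $1$ from $V_1$ (a shell that is nonempty precisely when $m\ge4$, which is the one place this hypothesis is used). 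A position-of-$a$ test, by contrast, must reject all $(m-1)!$ position-$2$ rankings equally, paying the full ``self-mass'' $Q(2\mid2)$; for $\varphi<1/m$ this smearing is strictly wasteful, so a test that puts its extra mass on $V_1$ does strictly better against $h_1$. Equivalently — and this is the concrete form of the conflict — at the right $\alpha$ every level-$\alpha$ most powerful test against $h_1$ rejects $V_1$ with positive probability, while by the mirror analysis every level-$\alpha$ most powerful test against $h_1'$ rejects $\sigma(V_1)=[b_2\succ a\succ b_1\succ b_3\succ\cdots]$ with positive probability and $V_1$ with probability $0$; since $\sigma=(b_1\,b_2)\in G$ maps the $h_1$-problem to the $h_1'$-problem the same $\alpha$ serves both, so no single test is most powerful against both, completing the argument via the reduction.

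The real work, and the main obstacle, is certifying the structure of these most powerful tests through Lemma~\ref{lem:hvs}: one must identify the least favorable distribution $\Lambda$ for the enlarged null $H_0=\ml(\ma)\setminus H_1$ vs.\ $h_1$ at the chosen level and verify conditions (i)–(ii). Unlike Theorems~\ref{thm:winnercvsm} and~\ref{thm:winnerbottomtopm}, where the null is small and a deterministic $\Lambda$ works, here $\Lambda$ must be genuinely randomized with support and weights depending on $\varphi$ — the single point $\delta_{V_1}$ fails condition (ii) because the position-$2$ rankings that still rank $a$ above $b_1$ then overshoot the size. Pinning down the correct $\Lambda$, verifying Lemma~\ref{lem:hvs}, and squeezing out the strict inequality $\beta(\alpha)>\Pi^*(\alpha)$ is where the bound $\varphi<1/m$ enters decisively (it controls $Z_m,Z_{m-1}$ and the position-$2$ self-mass $Q(2\mid2)$ relative to the power gains), alongside $m\ge4$ for the nonempty distance-$1$ shell that makes a position-of-$a$ test genuinely suboptimal.
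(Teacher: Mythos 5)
Your first step is essentially the paper's Lemma~\ref{lem:bordatest}: averaging a putative UMP test over $\mathrm{Sym}(\ma\setminus\{a\})$ yields a $G$-invariant (position-of-$a$) test that is still level-$\alpha$ and still attains the common optimal power $\beta(\alpha)$ at every $h_1\in\atopr$, so it suffices to beat every position-of-$a$ test at a single $h_1$. That reduction is correct, and your LP formulation $\Pi^*(\alpha)$ is even a slight simplification, since you avoid having to argue that the invariant test is monotone in the Borda score. The problem is that the second step --- the entire quantitative content of the theorem --- is never carried out. The assertion ``for $\varphi<1/m$ this smearing is strictly wasteful, so a test that puts its extra mass on $V_1$ does strictly better against $h_1$'' is exactly the inequality $\beta(\alpha)>\Pi^*(\alpha)$ you set out to prove, and you explicitly defer it (``pinning down the correct $\Lambda$, verifying Lemma~\ref{lem:hvs}, and squeezing out the strict inequality \ldots is where the bound $\varphi<1/m$ enters decisively''). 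No specific test is written down, no size is computed, no power comparison is made, and no use of either hypothesis $m\ge 4$ or $\varphi<1/m$ is actually verified. As it stands the proposal is a plan, not a proof.

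You have also made the remaining step harder than it needs to be. To show $\beta(\alpha)>\Pi^*(\alpha)$ you do not need to identify the most powerful test against $h_1$, hence no least favorable distribution and no appeal to Lemma~\ref{lem:hvs}: it is enough to exhibit \emph{one} level-$\alpha$ test whose power at $h_1$ exceeds that of the best position-of-$a$ test. That is what the paper does. It writes down an explicit critical function $f$ that assigns $1$ to all of $\atopr$ and to the position-$2$ rankings $V_2,V_4,\ldots,V_m$ (where $V_j$ raises $a_j$ to the top of $h_1=V_1$) and assigns the fractional value $\frac{1+\varphi^m}{1+\varphi}$ to $V_3$ and to a nearby ranking $V_3'$; it sets $\alpha=\size(f,V_2)$; it uses $\varphi<1/m$ precisely to check that $\size(f,h_0')\le\alpha$ for every other $h_0'\in H_0$, so that $f$ really is level-$\alpha$; and it then compares $\ratio_{V_2,V_1}$ of $f$'s rejection mass with that of the uniform smear over all position-$2$ rankings to conclude $\power(f,V_1)>\power(\bar f_{\alpha,a},V_1)$. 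Your stated role for $\varphi<1/m$ (controlling $Q(2\mid 2)$ ``relative to the power gains'') is not where the hypothesis is actually needed; it is needed to locate where the size of the concentrated test is attained. Until a concrete test is exhibited and these two computations (size attained at $V_2$; strict power gain at $V_1$) are done, the proof is incomplete.
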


\begin{thm}\label{thm:mallowswinnerexist} Let $\mallows$ denote a Mallows' model with $n=1$ and any $m\ge 4$. There exists  $\epsilon>0$ such that for any $\varphi> 1-\epsilon$ and any $\alpha$, a UMP test exists for  $H_0=(\ml(\ma)- H_1)$ vs.~$H_1=\atopr$.
\end{thm}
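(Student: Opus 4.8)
The plan is to write down an explicit level-$\alpha$ test that is symmetric under relabeling the alternatives other than $a$, and to certify it as UMP via Lemma~\ref{lem:hvs}, treating $1-\varphi$ as a small perturbation parameter. First I would note that if any UMP test exists it may be taken symmetric under $G:=\mathrm{Sym}(\ma\setminus\{a\})$: this group permutes $H_0=\ml(\ma)\setminus\atopr$ and $H_1=\atopr$, changes $\size(\cdot,h_0)$ only through its action on $h_0$, and fixes the most-powerful power at each $h_1$, so the $G$-average of a UMP test is again UMP. A $G$-symmetric test is a function of $\mathrm{rank}_V(a)$ (equivalently of the statistic $w_V(a\succ\others)$ used in Theorem~\ref{thm:winnerbottomtopm}), which makes the test $f_{\alpha,a}$ --- ``reject iff $a$ is ranked high enough'' --- the natural candidate, now with $K_\alpha,\Gamma_\alpha$ re-tuned so that $\size(f_{\alpha,a})=\alpha$ over the enlarged $H_0$. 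A short coupling/stochastic-dominance argument in the spirit of Lemma~\ref{lem:CC'} shows that $\size(f_{\alpha,a},h_0)$ depends only on the position of $a$ inside $h_0$ and is monotone in it, so its maximum over $H_0$ is attained exactly on the orbit $O$ of rankings in which $a$ occupies second place.

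Next, by Lemma~\ref{lem:hvs} it suffices to supply, for each of the $(m-1)!$ rankings $h_1\in\atopr$, a distribution $\Lambda_{h_1}$ supported on $O$ such that $f_{\alpha,a}$ is a version of $\lr_{\alpha,\Lambda_{h_1},h_1}$. Condition (i) of Lemma~\ref{lem:hvs} is automatic because $\size(f_{\alpha,a},\cdot)\equiv\alpha$ on $O$; condition (ii) holds by the choice of $K_\alpha$; and the remaining requirement is that the likelihood-ratio order induced by $\ratio_{\Lambda_{h_1},h_1}$ be compatible with the rank order --- every profile in which $a$ sits below the threshold must have $\ratio_{\Lambda_{h_1},h_1}$ no larger than every profile in which $a$ sits above it (and, for levels $\alpha$ that force $f_{\alpha,a}$ to randomize, the threshold orbit must lie inside $\{\ratio_{\Lambda_{h_1},h_1}=k_\alpha\}$).

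This compatibility is where $\varphi$ close to $1$ is used. Writing $\varphi=1-\delta$ one has, uniformly over the finitely many profiles $V$, $\ratio_{\Lambda,h_1}(V)=1+\delta\bigl(\E_{h_0\sim\Lambda}\kt(V,h_0)-\kt(V,h_1)\bigr)+O(\delta^2)$, so to leading order the requirement becomes a finite inequality among Kendall-tau distances; I would choose $\Lambda_{h_1}$ to concentrate on the orderings of $\ma\setminus\{a\}$ that are far in Kendall-tau from the restriction of $h_1$ to $\ma\setminus\{a\}$, which makes the leading-order inequality strict, and strictness then survives the $O(\delta^2)$ error for all $\varphi$ in a neighborhood of $1$, uniformly over the $(m-1)!$ choices of $h_1$ and the finitely many thresholds, hence for every $\alpha$. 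The hard part --- and the delicate point of the construction --- is this simultaneous control of $\ratio_{\Lambda_{h_1},h_1}$ across all rank orbits and all significance levels; it is precisely the step that fails for small $\varphi$, where the exponential weighting $\varphi^{\kt}$ stretches the within-orbit spread of likelihood ratios past the between-orbit gap, which is the mechanism behind the non-existence in Theorem~\ref{thm:mallowswinnernonex}.
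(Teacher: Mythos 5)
Your skeleton matches the paper's: symmetrize over $\mathrm{Sym}(\ma\setminus\{a\})$ to reduce to the Borda-threshold test $\bar f_{\alpha,a}$, show its size over $H_0$ is maximized on the orbit $O=T_{m-2}$ of rankings with $a$ in second place, and for each $h_1\in\atopr$ exhibit a least favorable distribution $\Lambda_{h_1}$ supported on $O$ satisfying Lemma~\ref{lem:hvs}. But there is a genuine gap at the step you yourself flag as delicate: when $0<\Gamma_\alpha<1$, the entire threshold orbit $T_{s_\alpha}$ must satisfy $\ratio_{\Lambda_{h_1},h_1}(V)=k_\alpha$ \emph{exactly}, since a most powerful test may disagree with the likelihood ratio test only on the boundary $\{\ratio=k_\alpha\}$, and $\bar f_{\alpha,a}$ spreads the randomization uniformly over the whole orbit. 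This is an exact algebraic constraint on $\Lambda_{h_1}$, and no perturbative argument can produce it: first-order expansions in $\delta=1-\varphi$ can only preserve \emph{strict} inequalities, never manufacture the required \emph{equalities}. Your recipe of concentrating $\Lambda_{h_1}$ on orderings far in Kendall-tau from $h_1$ does not make $\ratio_{\Lambda_{h_1},h_1}$ constant on $T_{s_\alpha}$ (note that even the uniform distribution on $T_{m-2}$ fails: it makes the mixture density constant on each orbit, but the numerator $\varphi^{\kt(V,h_1)}$ still varies within the orbit).

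The paper closes this gap by \emph{solving} for $\Lambda_{h_1}$: it sets up the linear system $\text{LP}_{s_\alpha}^{h_1}$ requiring $\sum_{h_0^*\in T_{m-2}}\ratio^{-1}_{h_0^*,h_1}(V)\,x[h_0^*,s_\alpha]$ to take the same value for every $V\in T_{s_\alpha}$, and this is where $\varphi>1-\epsilon$ is actually used: as $\varphi\to 1$ the coefficients tend to $1$ and the solution tends to the all-ones vector, so for $\varphi$ near $1$ the system has a \emph{strictly positive} solution that can be normalized into a probability distribution. You have misattributed the role of the hypothesis $\varphi>1-\epsilon$ — the between-orbit strict inequalities $\ratio(V)>\ratio(\text{Down}^1_a(V))$ hold for \emph{all} $\varphi$ by the triangle inequality for Kendall-tau (given that $\Lambda_{h_1}$ has full support on $T_{m-2}$), so they need no perturbative protection; what fails for small $\varphi$ is the positivity of the solution to the linear system (equivalently, the existence of any nonnegative $\Lambda_{h_1}$ equalizing the ratios on the threshold orbit), which is the mechanism behind Theorem~\ref{thm:mallowswinnernonex}. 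To repair your proof you would need to state and solve this equalization system and argue positivity of its solution for $\varphi$ close to $1$, uniformly over the finitely many $h_1$ and thresholds $s_\alpha$.
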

%


\section{\MakeUppercase{UMP Tests for Condorcet}}
\label{sec:condorcet}
We first prove two general theorems on UMP tests for statistical models that combine multiple independent models, and then apply them to characterize UMP tests under Condorcet's model. 

\begin{dfn}{\bf (Combining two models)}
Given two models $\mm_X=(\ms_X,\Theta_X,\vec \pi_X)$ and $\mm_Y=(\ms_Y,\Theta_Y,\vec \pi_Y)$, we let $\mm_{X}\otimes \mm_Y=(\ms_X\times \ms_Y,\Theta_X\times\Theta_Y,\vec\pi_{X}\times \vec\pi_Y)$, where  for any $(\pi_{\theta_X},\pi_{\theta_Y})\in \vec \pi_X\times \vec \pi_{Y}$ and any $P_X\in\ms_X$ and $P_Y\in\ms_Y$, we let $(\pi_{\theta_X},\pi_{\theta_Y})(P_X,P_Y)=\pi_{\theta_X}(P_X)\cdot \pi_{\theta_Y}(P_Y)$.
\end{dfn}

\begin{ex}\label{ex:condorcetm} Given a Condorcet's model $\condorcet$ with $m=3$. Let $\ma=\{1,2,3\}$. For any pair of alternatives $\{a,b\}$, we let $\mm_{\{a,b\}}=(\{0,1\}^n,\{0,1\},\vec\pi)$ denote the restriction of $\condorcet$ on the pairwise comparison between $a$ and $b$. We have $\condorcet=\mm_{\{1,2\}}\otimes \mm_{\{2,3\}}\otimes \mm_{\{1,3\}}$. \hfill$\blacksquare$
\end{ex}

Given two models $\mm_X$ and $\mm_Y$, the next theorem provides a way to leverage a least favorable distribution for a composite vs.~simple test under $\mm_X$ to a least favorable distribution for a composite vs.~simple test under the combined model $\mm_X \otimes \mm_Y$.
\begin{lem}\label{lem:ind}
For any pair of models $\mm_X$ and $\mm_Y$, suppose $\Lambda_X$ is a least favorable distribution for composite vs.~simple test ($H_{0,X}$ vs.~$x_1$) under $\mm_X$. For any $y_1\in \Theta_Y$, let $\Lambda^*$ be the distribution over $H_{0,X}\times\Theta_Y$ where for all $x\in H_{0,X}$, $\Lambda^*(x,y_1)=\Lambda_X(x)$. Then, $\Lambda^*$ is a least favorable distribution for $H_{0,X}\times \Theta_Y$ vs.~$(x_1,y_1)$ under $\mm_X\otimes \mm_Y$.
\end{lem}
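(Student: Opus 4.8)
The plan is to verify the two conditions of Lemma~\ref{lem:hvs} for $\Lambda^*$ by reducing them to the corresponding conditions that are already known to hold for $\Lambda_X$. The key first step is to compute the likelihood ratio $\ratio_{\Lambda^*,(x_1,y_1)}$ in the combined model. For any $(P_X,P_Y)\in\ms_X\times\ms_Y$, since $\Lambda^*$ is supported on $H_{0,X}\times\{y_1\}$ and densities in $\mm_X\otimes\mm_Y$ factorize, the factor $\pi_{y_1}(P_Y)$ appears in both numerator and denominator and cancels, so $\ratio_{\Lambda^*,(x_1,y_1)}(P_X,P_Y)=\ratio_{\Lambda_X,x_1}(P_X)$ depends only on $P_X$. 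Note also that $\supp(\Lambda^*)=\{(x^*,y_1):x^*\in\supp(\Lambda_X)\}$.

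Next I would observe that the mixture parameter $h_0^{\Lambda^*}$ has $P_X$-marginal exactly $h_{0,X}^{\Lambda_X}$, with an independent $\pi_{y_1}$-distributed $P_Y$ component. Hence the same threshold $k_\alpha$ and randomization constant $\gamma_\alpha$ that make $\lr_{\alpha,\Lambda_X,x_1}$ have size $\alpha$ under $\mm_X$ also make $\lr_{\alpha,\Lambda^*,(x_1,y_1)}$ have size $\alpha$ under $\mm_X\otimes\mm_Y$; with this choice, $\lr_{\alpha,\Lambda^*,(x_1,y_1)}(P_X,P_Y)=\lr_{\alpha,\Lambda_X,x_1}(P_X)$ as functions, i.e.~the combined-model likelihood ratio test ignores $P_Y$ entirely.

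The two conditions of Lemma~\ref{lem:hvs} then follow by a one-line computation each. For any $(x,y)\in H_{0,X}\times\Theta_Y$, since the test depends only on $P_X$ and $P_X,P_Y$ are independent under $(\pi_x,\pi_y)$, we get $\size(\lr_{\alpha,\Lambda^*,(x_1,y_1)},(x,y))=E_{P_X\sim\pi_x}\lr_{\alpha,\Lambda_X,x_1}(P_X)=\size(\lr_{\alpha,\Lambda_X,x_1},x)$. Taking $(x,y)=(x^*,y_1)$ with $x^*\in\supp(\Lambda_X)$ transfers condition (i) for $\Lambda_X$ to condition (i) for $\Lambda^*$; taking arbitrary $(x,y)$ and using $x\in H_{0,X}$ transfers condition (ii). Lemma~\ref{lem:hvs} then gives that $\lr_{\alpha,\Lambda^*,(x_1,y_1)}$ is a level-$\alpha$ most powerful test for $H_{0,X}\times\Theta_Y$ vs.~$(x_1,y_1)$, i.e.~$\Lambda^*$ is least favorable; since $\alpha$ was arbitrary, the identical argument upgrades ``least favorable'' to ``uniformly least favorable'' whenever $\Lambda_X$ is uniformly least favorable.

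There is no deep obstacle here: the entire content is the factorization observation that renders the likelihood ratio (and hence the optimal test) independent of $P_Y$. The only point requiring care is matching the constants $(k_\alpha,\gamma_\alpha)$ between the two models and remembering that in the combined model the size must be controlled over all of $H_{0,X}\times\Theta_Y$, including parameters with $y\neq y_1$ — where it is condition (ii), not condition (i), of Lemma~\ref{lem:hvs} that does the work.
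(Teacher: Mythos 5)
Your proposal is correct and follows essentially the same route as the paper's own proof: both compute $\ratio_{\Lambda^*,(x_1,y_1)}(P_X,P_Y)=\ratio_{\Lambda_X,x_1}(P_X)$ via cancellation of the common factor $\pi_{y_1}(P_Y)$, conclude that the combined-model likelihood ratio test ignores $P_Y$, and then transfer conditions (i) and (ii) of Lemma~\ref{lem:hvs} from $\Lambda_X$ to $\Lambda^*$ through the identity $\size(\lr_{\alpha,\Lambda^*,(x_1,y_1)},(x,y))=\size(\lr_{\alpha,\Lambda_X,x_1},x)$. No gaps.
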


\begin{ex} \rm Continuing Example~\ref{ex:condorcetm}, we let $\mm_X=\mm_{\{1,2\}}$, $H_{0,X}=\{0\}$, $x_1=1$, let $\Lambda_X$ be the deterministic distribution over $\{0\}$, and let $\mm_Y=\mm_{\{2,3\}\times \{1,3\}}$ and $y_1=(1,1)$. $\Lambda_X$ is a least favorable distribution according to the Neyman-Pearson lemma (Lemma~\ref{lem:NP}). Let $\Lambda^*$ denote the deterministic distribution over $\{(0,1,1)\}$. It follows from Lemma~\ref{lem:ind} that $\Lambda^*$ is a least favorable distribution for $(\{0\}\times \{0,1\}^2)$ vs.~$(1,1,1)$ under Condorcet's model. \hfill$\blacksquare$
\end{ex}

The next theorem focuses on the setting where we combine $t\in \mathbb N$  identical statistical models $\mm_X$. Given $\mm_X=(\ms,\Theta,\vec\pi)$, a distribution $\Lambda$ over $\Theta$, any $\theta^*\in \Theta$, and any $t\in \mathbb N$, we let $(\mm_X)^t=\underbrace{\mm_X\otimes\cdots\otimes\mm_X}_t$ and define the extension of $\Lambda$ to $\Theta^t$ w.r.t.~$\theta^*$, denoted by $\ext(\Lambda,\theta^*,t)$, as follows. Let $\vec \theta^*=(\theta^*,\ldots,\theta^*)\in \Theta^t$. For any $j\in t$ and any $\theta\in \Theta$, we have $\ext(\Lambda,\theta^*,t)(\theta,[\vec \theta^*]_{-j})=\frac{1}{t}\Lambda(\theta)$. That is, $\ext(\Lambda,\theta^*,t)$ generates a vector $\vec \theta\in \Theta^t$ in the following two steps. First, a number $j\le t$ is chosen uniformly at random. Then, we fix the components of $\vec \theta$ to be $\theta^*$, except for the $j$-th component, which is generated from $\Theta$ according to $\Lambda$.  

For any $H_0\subseteq \Theta$ and any $h_1\in (\Theta\setminus H_0)$, we let $\vec h_1=(\underbrace{h_1,\ldots,h_1}_t)$ and let $\ext(H_0,h_1,t)=(\{H_0\cup \{h_1\}\}^t\setminus \{\vec h_1\})$.  

\begin{ex}\label{ex:combext}\rm In the setting of Example~\ref{ex:condorcetm}, we let $\mm_X=\mm_{\{1,2\}}$, let $\Lambda$ denote the deterministic distribution over $\{0\}$, let $H_0=\{0\}$ and $h_1=1$. Then, $\ext(\Lambda,1,3)$ is the uniform distribution over $\{(0,1,1), (1,0,1),(1,1,0)\}$, $\vec h_1=(1,1,1)$, and $\ext(H_0,1,3)=(\{0,1\}^3\setminus\{(1,1,1)\})$. \hfill$\blacksquare$
\end{ex}

\begin{lem}\label{lem:extdomain}For any model $\mm_X$ and any $t\in\mathbb N$, suppose $\Lambda$ is a uniformly least favorable distribution for composite vs.~simple test ($H_0$ vs.~$h_1$) under $\mm_X$. Then $\ext(\Lambda,h_1,t)$ is a uniformly least favorable distribution for $\ext(H_0,h_1,t)$ vs.~$\vec h_1$ in $(\mm_X)^t$.
\end{lem}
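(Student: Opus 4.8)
The plan is to verify the criterion of Lemma~\ref{lem:dom} for $\Lambda':=\ext(\Lambda,h_1,t)$ inside $(\mm_X)^t$. Writing a data point as $\vec P=(P_1,\dots,P_t)\in\ms^t$ and $R(P):=\ratio_{\Lambda,h_1}(P)$ in the base model, a direct computation of the mixture density gives
$$\ratio_{\Lambda',\vec h_1}(\vec P)=\frac{\prod_{k}\pi_{h_1}(P_k)}{\tfrac1t\sum_{j=1}^t\bigl(\sum_{h_0}\Lambda(h_0)\pi_{h_0}(P_j)\bigr)\prod_{k\neq j}\pi_{h_1}(P_k)}=\frac{t}{\sum_{j=1}^t R(P_j)^{-1}},$$
so $\log\ratio_{\Lambda',\vec h_1}(\vec P)$ is a strictly decreasing function of $S(\vec P):=\sum_{j=1}^t R(P_j)^{-1}$. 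Since both $\Lambda'$ and $\ext(H_0,h_1,t)$ are invariant under permuting the $t$ coordinates (as is $(\mm_X)^t$), it suffices, for each $h_0^*\in\supp(\Lambda)$, to establish the weak dominance required by Lemma~\ref{lem:dom} for the representative $\vec h_0^*=(h_1,\dots,h_1,h_0^*)$ against an arbitrary $\vec h_0=(g_1,\dots,g_t)\in\ext(H_0,h_1,t)$. Because the log-likelihood-ratio is a decreasing function of $S$, this weak dominance is equivalent to: the law of $S$ under $\pi_{\vec h_0}$ weakly dominates the law of $S$ under $\pi_{\vec h_0^*}$. Writing $T^{(\theta)}$ for the law of $R(P)^{-1}$ when $P\sim\pi_\theta$, the former law is the independent sum $\sum_{j=1}^t T^{(g_j)}$ and the latter is $\sum_{k=1}^{t-1}T^{(h_1)}+T^{(h_0^*)}$.

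Next I would record two one-coordinate dominances, both consequences of $\Lambda$ being uniformly least favorable for $\mm_X$. (a) For every $h_0\in H_0$ and $h_0^*\in\supp(\Lambda)$, Lemma~\ref{lem:dom} applied to $\mm_X$ says $\log R$ under $\pi_{h_0^*}$ weakly dominates $\log R$ under $\pi_{h_0}$; applying the decreasing map $x\mapsto e^{-x}$, $T^{(h_0)}$ weakly dominates $T^{(h_0^*)}$. (b) For every $h_0\in H_0$, $T^{(h_0)}$ weakly dominates $T^{(h_1)}$, i.e.\ $\Pr_{\pi_{h_0}}(R>k)\le\Pr_{\pi_{h_1}}(R>k)$ for all $k$. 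To see (b), note that (a) already gives $\Pr_{\pi_{h_0}}(R>k)\le\Pr_{\pi_{h_0^*}}(R>k)$ for each $h_0^*\in\supp(\Lambda)$, hence $\Pr_{\pi_{h_0}}(R>k)\le\Pr_{\pi_{h_0^\Lambda}}(R>k)$ after averaging over $\Lambda$; and the likelihood-ratio test $\{R>k\}$ is, by the Neyman--Pearson Lemma (Lemma~\ref{lem:NP}) for $h_0^\Lambda$ vs.\ $h_1$, most powerful at its own size, so its power dominates its size, $\Pr_{\pi_{h_1}}(R>k)\ge\Pr_{\pi_{h_0^\Lambda}}(R>k)$. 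Chaining the two inequalities proves (b).

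Finally I would compare the two independent sums by a matching argument. Let $I=\{j:g_j\in H_0\}$; since $\vec h_0\neq\vec h_1$ we have $I\neq\emptyset$ --- and this is exactly why $\vec h_1$ is excluded from $\ext(H_0,h_1,t)$. Fix $j_0\in I$: match $T^{(g_{j_0})}$ with $T^{(h_0^*)}$ (weak dominance by (a)); match each of the $|I|-1$ remaining summands $T^{(g_j)}$, $j\in I\setminus\{j_0\}$, with one copy of $T^{(h_1)}$ (weak dominance by (b)); and match the $t-|I|$ summands $T^{(h_1)}$, $j\notin I$, with the $t-|I|$ leftover copies of $T^{(h_1)}$. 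Since first-order stochastic dominance is preserved under convolution of independent random variables, $\sum_{j}T^{(g_j)}$ weakly dominates $\sum_{k=1}^{t-1}T^{(h_1)}+T^{(h_0^*)}$, which is what was needed; Lemma~\ref{lem:dom} then yields that $\Lambda'$ is uniformly least favorable for $\ext(H_0,h_1,t)$ vs.\ $\vec h_1$.

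The step I expect to be the crux is (b): showing that the size of the likelihood-ratio tests against $h_1$ is bounded by their power not merely ``on average over $\supp(\Lambda)$'' but uniformly over all of $H_0$. It is precisely here that the two facets of ``$\Lambda$ is least favorable'' must be used together --- the domination of $\supp(\Lambda)$ over $H_0$ from Lemma~\ref{lem:dom}, and the power-$\ge$-size property of most powerful tests. A minor additional nuisance is the bookkeeping for data points with $\pi_{h_1}(P)=0$ (so $R(P)=0$ and $T^{(\cdot)}$ has an atom at $+\infty$) and for the randomization value $\gamma$ at ties $R=k$; these are handled as elsewhere in the paper for finite models, and are vacuous for Condorcet's model, where every $\pi_\theta(P)$ is positive.
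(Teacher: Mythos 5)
Your proof is correct, but it takes a genuinely different route from the paper's. The paper verifies the two conditions of Lemma~\ref{lem:hvs} directly: it proves the same inverse-ratio identity (its Claim~\ref{claim:invratio} is your first display), then establishes a one-coordinate swap lemma (Lemma~\ref{lem:change}) stating that replacing a coordinate $h_0\mapsto h_0^*\mapsto h_1$ never decreases the size; that lemma is proved by conditioning on the other coordinates' contribution to $\sum_j\ratio^{-1}_{\Lambda,h_1}(x_j)$ and writing the size as an integral of sizes of one-dimensional likelihood-ratio tests, with the $h_0^*\le h_1$ half resting on a separate claim (Claim~\ref{claim:sizeh1}) that power dominates size, proved by direct manipulation of the likelihood ratio. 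Condition (ii) is then obtained by recursively applying the swap lemma. You instead reduce everything to the stochastic-dominance criterion of Lemma~\ref{lem:dom}, record the two one-coordinate dominances (your (a) and (b) are exactly the two inequalities of Lemma~\ref{lem:change}, with (b) proved via the cleaner ``a most powerful test beats the constant test, hence power $\ge$ size'' argument in place of Claim~\ref{claim:sizeh1}), and then combine them in one shot using preservation of first-order stochastic dominance under convolution of independent summands --- the same device the paper uses to prove Lemma~\ref{lem:ext}, and with the same permutation-symmetry reduction to a single representative of $\supp(\ext(\Lambda,h_1,t))$. What your route buys is a shorter combination step (no recursion, no conditional integral) and a unified technique across Lemma~\ref{lem:ext} and Lemma~\ref{lem:extdomain}; what the paper's route buys is that it never needs to pass through the dominance characterization and works with sizes directly, which makes the role of Lemma~\ref{lem:hvs} explicit. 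Your closing remarks are also on point: the nonemptiness of $I$ is precisely where the exclusion of $\vec h_1$ from $\ext(H_0,h_1,t)$ is used, and the tie/zero-probability bookkeeping is handled exactly as in the rest of the paper for finite models.
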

\Omit{
\begin{proof} Again the proof is done by applying Lemma~\ref{lem:hvs}. 
We first prove a claim that characterizes samples whose likelihood ratio is no more than a given threshold. To this end, it is convenient to use the inverse of the likelihood ratio. To simplify notation, in this proof we let $\Lambda^*=\ext(\Lambda,h_1,t)$, let $H_0^*=\ext(H_0,h_1,t)$, let $\lr_\alpha=\lr_{\alpha,\Lambda^*,\vec h_1}$, $\ratio=\ratio_{\Lambda^*,\vec h_1}$. 

\begin{claim}\label{claim:invratio} For any $k_{\alpha}$ and any $\vec x\in \ms^t$, $\sum_{j=1}^t\ratio^{-1}_{\Lambda, h_1}(x_j)=t\cdot \ratio^{-1}(\vec x)$.
\end{claim}
\begin{proof} we have
$\ratio^{-1}(\vec x)=\frac{1}{t}\cdot \frac{\sum_{j=1}^t\sum_{h_0\in H_0}\Lambda(h_0)\cdot \pi_{(h_0,[\vec h_1]_{-j})}(\vec x)}{\pi_{\vec h_1}(\vec x)}
=\frac{1}{t}\cdot \frac{\sum_{j=1}^t\sum_{h_0\in H_0}\Lambda(h_0)\cdot \pi_{h_0}(x_j)\cdot \pi_{[\vec h_1]_{-j}}(x_j)}{\pi_{h_1}(x_j)\cdot \pi_{[\vec h_1]_{-j}}(x_j)}$\\ $
=\frac{1}{t}\sum_{j=1}^t\ratio^{-1}_{\Lambda,h_1}(x_j)$
\end{proof}

The next lemma proves the following:
For any $\vec z\in H_0^*$ and any $j\le t$, suppose the $j$-th component is not in $\supp(\Lambda)\cup\{h_1\}$. If we fix all components except $j$-th in $\vec z$ and change the $j$-th component to $h_0^*\in \supp(\Lambda)$, then the size of $\lr_\alpha$ will increase. If we further change the $j$-th component to $h_1$, then the size of $\lr_\alpha$ will further increase. Due to the space constraint, all missing proofs can be found in the appendix.

\begin{lem}\label{lem:change} For  any $0\le \alpha\le 1$, any $j\le t$, any $\vec z_{-j}\in \Theta^{t-1}$, any $h_0\in H_0$, and any $h_0^*\in \supp(\Lambda)$, we have $\size(\lr_\alpha,(h_0,\vec z_{-j}))\le \size(\lr_\alpha,(h_0^*,\vec z_{-j}))\le \size(\lr_\alpha,(h_1,\vec z_{-j}))$.
\end{lem}
It follows from Lemma~\ref{lem:change} that for any $j\le t$  and any $h_0^*\in \supp(\Lambda)$, we have that $\size(\lr_\alpha,(h_0^*,[\vec h_1]_{-j}))$ is the same. Due to symmetry, for any $\vec h_0^*\in H_0^*$, $\size(\lr_\alpha,h_0^*)$ is the same and is therefore equivalent to $\alpha$. This verifies condition (i) in Lemma~\ref{lem:hvs}. 

Condition (ii) in Lemma~\ref{lem:hvs} is verified by recursively applying Lemma~\ref{lem:change}. Given any $\vec h_0\in H_0^*- \supp(\Lambda^*)$, there must exist $j\le t$ such that $[\vec h_0]_j\ne h_1$. We then change $[\vec h_0]_j$ to an arbitrary $h_0^*\in \supp(\Lambda)$, then change the other components of $\vec h_0$ to $h_1$ one by one. Each time we make the change the size of $\lr_\alpha$ does not decrease according to Lemma~\ref{lem:change}. At the end of the process we obtain $(h_0^*,[\vec h_1]_j)\in\supp(\Lambda^*)$, at which the size of $\lr_\alpha$ is $\alpha$. The theorem follows after applying Lemma~\ref{lem:hvs}.
\end{proof}
}

\begin{ex}\rm In the setting of Example~\ref{ex:combext}, it follows from Lemma~\ref{lem:extdomain} that 
the uniform distribution over $\{(0,1,1), (1,0,1),(1,1,0)\}$ is a uniformly least favorable distribution for testing $\ext(H_0,1,3)=(\{0,1\}^3\setminus\{\vec 1\})$ vs.~$\vec h_1=(1,1,1)$ under $(\mm_{X})^3$, which is the Condorcet's model with $m=3$. \hfill$\blacksquare$
\end{ex}

\noindent{\bf Non-Winner Tests for Condorcet.} We are now ready to  characterize UMP tests for Condorcet's model by applying Lemma~\ref{lem:ind} and~\ref{lem:extdomain}. Theorem~\ref{thm:winnercvsc}  and Theorem~\ref{thm:tvcc} of this section are counterparts of Theorem~\ref{thm:winnercvsm} and Theorem~\ref{thm:tvcm} (both are for Mallows' model), respectively, though the proof techniques are quite different.
\begin{thm}\label{thm:winnercvsc}  {\bf (A most powerful non-winner test for Condorcet)}
Given a Condorcet's model $\condorcet$ with $m\ge 2$, for any $a\in\ma$, any $h_1\in (\mb(\ma)\setminus\atopb)$, any $n$, and any $\varphi$, the following test is most powerful for testing $\atopb$ vs.~$h_1$. 
For any $n$-profile $P_n$,
$$g_{\alpha,a,B}(P_n)=\left\{\begin{array}{cc}1&\text{if }w_{P_n}(B\succ a)>K_\alpha\\
0&\text{if }w_{P_n}(B\succ a)<K_\alpha\\
\Gamma_\alpha&\text{if }w_{P_n}(B\succ a)=K_\alpha
\end{array}\right.,$$
where $B$ is the set of alternatives that are preferred to $a$ in $h_1$. 
\end{thm}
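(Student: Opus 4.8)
The plan is to mirror the structure of the proof of Theorem~\ref{thm:winnercvsm}, but instead of appealing to the Mallows-specific dominance result (Lemma~\ref{lem:CC'}) and to Lemma~\ref{lem:ext}, to exploit the product structure of Condorcet's model via Lemma~\ref{lem:ind} and Lemma~\ref{lem:extdomain}. First I would recognize that $\condorcet$ with parameter space $\mb(\ma)$ and $n$ i.i.d.\ samples decomposes as $\bigotimes_{\{b,c\}\subseteq\ma}(\mm_{\{b,c\}})^n$, where $\mm_{\{b,c\}}$ is the one-bit model on the comparison of $b$ and $c$ (as in Example~\ref{ex:condorcetm}). Under this decomposition, $\atopb$ — the set of binary relations in which $a$ beats everyone — corresponds to fixing, in each of the $m-1$ coordinates $\{a,c\}$, the bit ``$a\succ c$'', while leaving all $\binom{m-1}{2}$ coordinates among $\ma\setminus\{a\}$ completely free. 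In other words, $H_0=\atopb$ is exactly a product set of the form required by Lemma~\ref{lem:ind}, iterated over the $m-1$ ``$a$-coordinates''.

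Next I would identify the uniformly least favorable distribution. As in Theorem~\ref{thm:winnercvsm}, let $B$ be the set of alternatives preferred to $a$ in $h_1$, and let $h_0^*$ be the binary relation obtained from $h_1$ by flipping every edge $b\succ a$ with $b\in B$ to $a\succ b$ (so $a$ now beats everyone, and all other comparisons are untouched); since $h_1\notin\atopb$, $B\ne\emptyset$ and $h_0^*\in\atopb$. I would then build $\Lambda$ as the deterministic distribution on $\{h_0^*\}$ coordinate-by-coordinate: on each ``$a$-coordinate'' $\{a,c\}$, by the Neyman–Pearson lemma (Lemma~\ref{lem:NP}) the deterministic distribution at the appropriate bit is (uniformly) least favorable for the one-bit composite-vs-simple problem $\{0\}$ vs.\ $1$; the non-$a$-coordinates are simple-vs-simple and contribute nothing. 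Repeatedly applying Lemma~\ref{lem:ind} across the $m-1$ $a$-coordinates (and then Lemma~\ref{lem:extdomain}, or simply the fact that the full $n$-sample model is a further product, to go from $n=1$ to general $n$) shows that the deterministic distribution at $\{h_0^*\}$ is a (uniformly) least favorable distribution for $\atopb$ vs.\ $h_1$ in $\condorcet$ with $n$ samples.

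Having established that, the likelihood ratio test $\lr_{\alpha,h_0^*,h_1}$ is most powerful by Lemma~\ref{lem:hvs}. The final step is the computation identifying this test with $g_{\alpha,a,B}$: for a profile $P_n$, $\log\ratio_{h_0^*,h_1}(P_n)=(\kt(P_n,h_1)-\kt(P_n,h_0^*))\log\varphi$, and since $h_1$ and $h_0^*$ differ only on the edges between $B$ and $a$, this difference equals $|B|-2w_{P_n}(B\succ a)$ up to sign, exactly as in the displayed calculation in the proof of Theorem~\ref{thm:winnercvsm} (the binary-relation setting changes nothing in that pairwise-disagreement count). Hence $\ratio_{h_0^*,h_1}(P_n)$ is a monotone function of $w_{P_n}(B\succ a)$, so the likelihood ratio test is precisely the threshold test $g_{\alpha,a,B}$ on the statistic $w_{P_n}(B\succ a)$, with $K_\alpha,\Gamma_\alpha$ chosen to make the size $\alpha$.

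The main obstacle I anticipate is verifying cleanly that $H_0=\atopb$ really is a product set in the sense Lemma~\ref{lem:ind} needs, and that the coordinate-wise deterministic distributions assemble — via iterated application of Lemma~\ref{lem:ind} over the $m-1$ constrained coordinates — into a least favorable distribution for the \emph{whole} test; one must be careful that Lemma~\ref{lem:ind} is being applied with $\mm_X$ a single $a$-coordinate and $\mm_Y$ the product of all remaining coordinates, and that the ``simple'' side $h_1$ restricted to $\mm_Y$ is handled correctly (it is genuinely simple there). The per-coordinate Neyman–Pearson step and the monotonicity/identification computation are routine; the bookkeeping of the product decomposition is where care is required.
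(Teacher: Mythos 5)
Your proposal is correct and follows essentially the same route as the paper: identify $h_0^*$ by forcing $a\succ b$ for all $b$ in $h_1$, use the Neyman--Pearson lemma on the $a$-coordinates (where $H_0=\atopb$ restricts to a single point, so that part is simple vs.~simple) together with Lemma~\ref{lem:ind} to absorb the free coordinates among $\ma\setminus\{a\}$, and then identify the resulting likelihood ratio test with $g_{\alpha,a,B}$ via monotonicity in $w_{P_n}(B\succ a)$. The only cosmetic differences are that the paper bundles all $m-1$ $a$-coordinates into a single block $\mm_X$ (so Lemma~\ref{lem:ind} is applied once rather than iterated) and defines $\mm_X$ directly on the $n$-sample space, so neither Lemma~\ref{lem:ext} nor Lemma~\ref{lem:extdomain} is actually needed here.
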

\begin{proof} 
Let $h_0^*$ denote the binary relation obtained from $h_1$ by enforcing $a\succ b$ for all $b\in \ma$. We will prove that the deterministic distribution over $\{h_0^*\}$ is a uniformly least favorable distribution for $\atopb$ vs.~$h_1$. 

Let $X=\{\{a,b\}:b\ne a\}$ denote the pairwise comparisons between alternatives in $\ma$ that involve $a$ and let $Y$ denote the set of all other pairwise comparisons. Let $\mm_X=(\ms_X,\Theta_X,\vec \pi_X)$ denote Condorcet's model $\condorcet$ restricted to $X$. That is, $\ms_X= \{0,1\}^{(m-1)n}, \Theta_X=\{0,1\}^m$ and  for any $\theta\in \Theta_X$ and any $P_n\in \ms_X $,  $\pi_\theta(P_n)\propto \varphi^{\kt(\theta, P_n)}$. Similarly, let $\mm_Y$ denote Condorcet's model restricted to $Y$.  It follows that $\condorcet=\mm_X\otimes \mm_Y$.

Let $h_1=(x_1,y_1)$, where $x_1\in \Theta_X$ and $y_1\in \Theta_Y$. Let $x_0\in\Theta_X$ denote the vector that represents $a\succ b$ for all $b\in\ma$. By Neyman-Pearson lemma (Lemma~\ref{lem:NP}), the deterministic distribution $\Lambda_X=\{x_0\}$ is a uniformly least favorable distribution for $x_0$ vs.~$x_1$. Therefore, by Lemma~\ref{lem:ind}, the deterministic distribution $\Lambda=\{(x_0,y_1)\}$ is  uniformly least favorable for $\{x_0\}\times \Theta_Y$ vs.~$(x_1,y_1)$. We note that $(x_0,y_1)=h_0^*$ and $(x_1,y_1)=h_1$. It is not hard to verify that $g_{\alpha,a,B}$ is equivalent to the likelihood ratio test $\lr_{\alpha,\Lambda,h_1}$, which is most powerful. The theorem follows after Lemma~\ref{lem:hvs}.\end{proof}

Subsequently, we have the following characterization of UMP non-winner tests under Condorcet's model ($H_0=\atopb$).
For any $B\subset \ma$, we let $R_{B\succ a}\subseteq \mb(\ma)$ denote the set of all binary relations where the set of alternatives that are preferred to $a$ is $B$.

\begin{thm}\label{thm:tvcc} {\bf (Characterization of UMP non-winner tests for Condorcet)}
Let $\condorcet$ denote a Condorcet's model with any $m\ge 2$ and $n\ge 2$. There exists a UMP test for $H_0=\atopb$ vs.~$H_1$ for every $0<\alpha<1$ if and only if there exists $B\subseteq \ma$ such that $H_1\subseteq R_{B\succ a}$. 

Moreover, when $H_1\subseteq R_{B\succ a}$, $g_{\alpha,a,B}$ defined in Theorem~\ref{thm:winnercvsc} is a UMP test.
\end{thm}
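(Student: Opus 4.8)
The plan is to mirror the structure of the proof of Theorem~\ref{thm:tvcm}, but replacing the Mallows-specific domination argument (Lemma~\ref{lem:CC'}) with the general combination lemmas (Lemma~\ref{lem:ind} and Lemma~\ref{lem:extdomain}) already established for products of models. For the ``if'' direction, suppose $H_1\subseteq R_{B\succ a}$ for some $B\subseteq\ma$. I would show $g_{\alpha,a,B}$ from Theorem~\ref{thm:winnercvsc} is UMP for $H_0=\atopb$ vs.~$H_1$. The key is to produce a single distribution $\Lambda$ over $H_0$ that is a least favorable distribution (for every $0<\alpha<1$) simultaneously for \emph{all} $h_1\in H_1$. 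Decompose $\condorcet=\mm_X\otimes\mm_Y$ as in the proof of Theorem~\ref{thm:winnercvsc}, where $X$ is the set of pairwise comparisons involving $a$ and $Y$ the rest; the sub-model $\mm_X$ further splits by Lemma~\ref{lem:ind}/Example~\ref{ex:condorcetm} into one two-point model per alternative $b\ne a$. For each $b\in B$ the ``adversarial'' direction is the one asserting $a\succ b$ (pushing $a$ up), and for each $b\notin B$ it is again $a\succ b$; in all cases on the $X$-coordinates the least favorable parameter is the unique $x_0$ representing ``$a\succ$ everything'', which by Neyman--Pearson (Lemma~\ref{lem:NP}) is uniformly least favorable in each two-point factor regardless of which $h_1\in H_1$ we test. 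On the $Y$-coordinates, since $H_1\subseteq R_{B\succ a}$ all members of $H_1$ may differ, but by Lemma~\ref{lem:ind} we are free to place the $Y$-mass at the value matching whichever $h_1$ we test; the point is that the $X$-part of the least favorable distribution does not depend on $h_1$, so $\Lambda=$ (deterministic at $x_0$ on $X$, matched on $Y$) works uniformly, and $g_{\alpha,a,B}$ — which only looks at $w_{P_n}(B\succ a)$, a statistic of the $X$-coordinates — is the corresponding likelihood ratio test, hence UMP by Lemma~\ref{lem:hvs}.

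For the ``only if'' direction, suppose no $B$ satisfies $H_1\subseteq R_{B\succ a}$; then $H_1$ contains two binary relations $h_1,h_1'$ with distinct sets $B\ne B'$ of alternatives preferred to $a$. I would argue that the level-$\alpha$ most powerful tests for $\atopb$ vs.~$h_1$ and for $\atopb$ vs.~$h_1'$ genuinely disagree for suitable $\alpha$, so no single test can be UMP. By Theorem~\ref{thm:winnercvsc} the most powerful test for $\atopb$ vs.~$h_1$ is essentially $g_{\alpha,a,B}$, a monotone function of $w_{P_n}(B\succ a)$, and for $h_1'$ it is a monotone function of $w_{P_n}(B'\succ a)$. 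Since $B\ne B'$, pick $b^*$ in the symmetric difference, say $b^*\in B\setminus B'$. Using $n\ge 2$ I would exhibit two profiles $P,P'$ that the first test ranks oppositely (one strictly in its critical region, one strictly outside) but on which the second test statistic $w(B'\succ a)$ takes the same value — e.g.\ by moving votes on the $\{a,b^*\}$ comparison, which changes $w(B\succ a)$ but not $w(B'\succ a)$. Then for an $\alpha$ whose threshold $K_\alpha$ separates these values, the unique (up to boundary) most powerful $h_1$-test rejects on $P'$ but not $P$, while any $h_1'$-most-powerful test must treat $P$ and $P'$ identically; the uniqueness clause of Lemma~\ref{lem:NP}/Lemma~\ref{lem:hvs} (choosing $\alpha$ so no sample sits exactly on the critical boundary) forces any candidate UMP test to coincide with both, a contradiction. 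This is the standard ``two incompatible most powerful tests'' obstruction argument, and it parallels whatever is done in the appendix for Theorem~\ref{thm:tvcm}.

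The main obstacle I anticipate is the ``only if'' direction: carefully choosing the level $\alpha$ and the witnessing profiles so that (a) $\alpha$ is a value at which uniqueness of the most powerful test holds (no profile on the critical boundary, so Lemma~\ref{lem:NP}'s moreover-clause pins the test down), and (b) the two test statistics $w(B\succ a)$ and $w(B'\succ a)$ can be decoupled — a value of one held fixed while the other varies across a threshold. Because the sample space here is $\mb(\ma)^n$ (binary relations, no transitivity constraint), decoupling should actually be \emph{easier} than in the Mallows case: the $\{a,b\}$-comparison coordinates are mutually independent and independently adjustable, so flipping the $\{a,b^*\}$ coordinate in one vote moves $w(B\succ a)$ by $\pm2$ while leaving every $w(b\succ a)$ for $b\ne b^*$ — and hence $w(B'\succ a)$ — untouched. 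The remaining care is purely in bookkeeping the normalization and in confirming that for generic $\varphi$ the relevant likelihood-ratio level sets are distinct so that a clean separating $\alpha$ exists; I would handle the possibly-degenerate $\varphi$ by a limiting/perturbation remark or by noting the critical values form a finite set so all but finitely many $\alpha$ are ``clean''.
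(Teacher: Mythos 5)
Your plan is the right one and matches the paper's intent: the paper omits this proof with the remark that it is ``similar to the proof of Theorem~\ref{thm:tvcm}'', and your two-part structure mirrors that appendix proof. The ``if'' direction is fine — $g_{\alpha,a,B}$ depends on $h_1$ only through $B$, and Theorem~\ref{thm:winnercvsc} already makes it most powerful against each $h_1\in R_{B\succ a}$, so it is UMP; your re-derivation via $\mm_X\otimes\mm_Y$ and Lemma~\ref{lem:ind} is correct but adds nothing beyond citing Theorem~\ref{thm:winnercvsc}. Your worry about ``generic $\varphi$'' is also moot: for every $0<\varphi<1$ the likelihood ratio against $h_1$ is proportional to $\varphi^{-w_{P_n}(B\succ a)}$, a strictly monotone function of an integer-valued statistic of fixed parity, so choosing $K_\alpha$ at a non-achievable value already makes the $h_1$-test unique and deterministic.

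The gap is in the ``only if'' direction, at the step ``any $h_1'$-most-powerful test must treat $P$ and $P'$ identically.'' Neyman--Pearson pins a most powerful test for $\atopb$ vs.~$h_1'$ down only \emph{off} the boundary $\{P_n: w_{P_n}(B'\succ a)=K_\alpha'\}$; on that boundary the test may legitimately take different values on different profiles, since any reassignment preserving the total size also preserves the power (all boundary profiles share one likelihood ratio). You can choose $\alpha$ to empty the boundary of the $h_1$-test, but $K_\alpha'$ is then determined by that same $\alpha$, and in general the $h_1'$-test \emph{does} carry randomization mass on its own boundary — you cannot clean both tests at once. So if your decoupled pair happens to satisfy $w_P(B'\succ a)=w_{P'}(B'\succ a)=K_\alpha'$, no contradiction follows; and in the case $B'\subsetneq B$ (note also that $B\setminus B'$ may be empty, so $b^*$ must be taken w.l.o.g.\ in whichever difference is nonempty) you cannot freely vary the common $B'$-value while holding the $B$-statistic at a fixed straddling value, so dodging the boundary by perturbing the pair is not automatic. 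The paper's Mallows argument avoids this differently, and you should import that device: first use one profile $P_n^*$ with $f(P_n^*)=1$ to deduce the one-sided bound $K_\alpha'\le w_{P_n^*}(B'\succ a)$ (if $P_n^*$ were strictly below $K_\alpha'$ the off-boundary agreement would force $f(P_n^*)=0$), and then exhibit a second profile whose $B'$-statistic \emph{strictly} exceeds $w_{P_n^*}(B'\succ a)$ — this is exactly where $n\ge 2$ is used — while its $B$-statistic drops below $K_\alpha$; the second profile is then guaranteed off the $B'$-boundary and yields the contradiction.
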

The proof is similar to the proof of Theorem~\ref{thm:tvcm} and is thus omitted.

\noindent{\bf Winner Tests for Condorcet.} Finally, we turn to UMP winner tests for Condorcet's model ($H_1=\atopb$). 


\begin{thm}[\bf A UMP winner test for Condorcet]\label{thm:winnerotherstopc} Let $\condorcet$ denote a Condorcet's model with any $m\ge 2$, any $n\ge 2$, and any $\varphi$.
For any $\alpha$, $g_{\alpha,a}$ defined below is a level-$\alpha$ UMP test for $H_0=(\mb(\ma)\setminus H_1)$ vs.~$H_1=\atopb$.  For any $P_n$,  
$$g_{\alpha,a}(P_n)=\left\{\begin{array}{cc}1&\text{if }\ratio(P_n)>K_\alpha\\
0&\text{if }\ratio(P_n)<K_\alpha\\
\Gamma_\alpha&\text{if }\ratio(P_n)=K_\alpha
\end{array}\right.,$$
where $\ratio(P_n)=\dfrac{m-1}{\sum_{b\ne a}\varphi^{w_{P_n}(a\succ b)}}$, and $K_\alpha$ and $\Gamma_\alpha$ are chosen such that the level of $g_{\alpha,a}$ is $\alpha$.
\end{thm}
\begin{proof} Let $\mm_1$ denote Condorcet's model with a single sample. Let $X_1,\ldots,X_{m-1}$ denote the $m-1$ pairwise comparisons between $a$ and other alternatives. Similarly to the proof of Theorem~\ref{thm:winnercvsc}, we let $\mm_{X_1},\ldots,\mm_{X_{m-1}}$ denote the restriction of $\mm_1$ on the $m-1$ pairwise comparisons, and let $\mm_Y$ denote the restriction of $\condorcet$ on other pairwise comparisons. In fact, $\mm_{X_1},\ldots\mm_{X_{m-1}}$ are the same model. It follows that $\condorcet=\mm_{X_1}\otimes\mm_{X_2}\otimes\cdots \otimes\mm_{X_{m-1}}\otimes\mm_Y$.

In $\mm_{X_1}$, let $1$ represent that $a$ is more preferred in the pairwise comparison. Due to the Neyman-Pearson lemma (Lemma~\ref{lem:NP}), the deterministic distribution $\Lambda=\{0\}$ is a uniformly least favorable distribution for $H_0=\{0\}$ vs.~$h_1=1$. For any $n\in\mathbb N$, let $\mm_{X_{1,n}}$ denote $\mm_{X_1}$ with $n$ i.i.d.~samples. It follows from Lemma~\ref{lem:ext} that $\Lambda$ is still a uniformly least favorable distribution for $\mm_{X_{1,n}}$. By Lemma~\ref{lem:extdomain}, $\ext(\Lambda,h_1,m-1)$ is a uniformly least favorable distribution for $\ext(H_0,h_1,m-1)=(\{0,1\}^{m-1}\setminus\{\vec 1\})$ vs.~$h_1=\vec 1$ under $\mm_{X_{1,n}}\otimes\cdots\otimes \mm_{X_{m-1,n}}$. 

Let $\mm_{Y,n}$ denote the model obtained from $\mm_Y$ by using $n$ i.i.d.~samples. For any $y_1\in\Theta_{Y,n}$, let $\Lambda_{y_1}$ denote the distribution that is obtained from $\ext(\Lambda,h_1,m-1)$ by appending $y_1$ to each parameter. By Lemma~\ref{lem:ext}, $\Lambda_{y_1}$  is a uniformly least favorable distribution for $\ext(H_0,h_1,m-1)\times \Theta_{Y,n}$ vs.~$(\vec 1,y_1)$ under $\mm_{X_{1,n}}\otimes\cdots\otimes \mm_{X_{m-1,n}}\otimes \mm_{Y,n}$, which is the Condorcet's model with $n$ i.i.d.~samples. We note that $\ext(H_0,h_1,m-1)\times \Theta_{Y,n}=(\{0,1\}^{m-1}\setminus\{\vec 1\})\times \Theta_Y=(\mb(\ma)\setminus\atopb$). This means that the likelihood ratio test $\lr_{\alpha,\Lambda_{y_1},(\vec 1,y_1)}$ is a most powerful level-$\alpha$ test for $(\mb(\ma)\setminus\atopb)$ vs.~$(\vec 1,y_1)$. We note that for all $y_1$, $\lr_{\alpha,\Lambda_{y_1},(\vec 1,y_1)}$ is the same test, which means that it is also UMP. The theorem is proved after noticing that $g_{\alpha,a}=\lr_{\alpha,\Lambda_{y_1},(\vec 1,y_1)}$.
\end{proof}


\section{\MakeUppercase{Discussion: Beyond Binary Choice}}\label{sec:discussion}
%
%
All UMP tests we have characterized so far are optimal in making binary decisions, such as whether a given alternative $a$ is the winner. We propose two natural procedures to choose the winner by combining multiple winner tests ($H_1=\atopr$ for Mallows' model and $H_1=\atopb$ for Condorcet' model) and non-winner tests ($H_0=\atopr$ for Mallows' model and $H_0=\atopb$ for Condorcet' model).

\noindent{\bf Procedure based on combining winner tests.} We first choose any winner test, such as a UMP test characterized in Theorem~\ref{thm:winnerbottomtopm}, then find the alternative $a$ with the minimum $\alpha$ such that $H_0$ is rejected in the winner test, by conducting binary search on $\alpha$.\footnote{Co-winners exist if they all reject $H_0$ for the same $\alpha$.} This corresponds at a high level to choosing the alternative that is most likely to be the winner according to the tests. 

\noindent{\bf Procedure based on combining non-winner tests.} Similarly, we use binary search on $\alpha$ to find the alternative $a$ with the maximum $\alpha$ such that $H_0$ is rejected in the non-winner test. This corresponds to choosing the alternative that is mostly unlikely to be a non-winner according to the tests.


Interestingly, both procedures correspond to the Borda voting rule when the proposed UMP tests for Mallows' model are used: in  the UMP winner test we let $H_{0}=L_{\others\succ a}$ vs.~$H_1=L_{a\succ\others}$ as in Example~\ref{ex:umpmallowswt}, and in  the UMP non-winner test we let $H_0=L_{a\succ\others}$ vs.~$H_{1}=L_{\others\succ a}$ as in Example~\ref{ex:umpmallows}. This provides a new theoretical justification for the Borda rule; or vice versa, Borda provides a justification of the proposed procedure.

\section{\MakeUppercase{Future Work}}

An immediate open question is  how to use hypothesis testing for choosing a winner beyond testing whether a given alternative is a winner or not, following  the initial thoughts discussed in Section~\ref{sec:discussion}. 
Also, can we characterize UMP tests for other goals of social choice, such as pairwise comparisons? Do UMP tests exist for other statistical models, such as  random utility models?  How can we efficiently compute the results of the proposed tests? 

\newpage


\onecolumn
\newpage
\section{Appendix: Proofs}

\vspace{3mm}
\noindent{\bf Lemma~\ref{lem:ext}.} {\em Suppose $\Lambda$ is a deterministic uniformly least favorable distribution for composite vs.~simple test ($H_0$ vs.~$h_1$) under $\mm=(\ms,\Theta,\vec\pi)$. Then for any $n\in \mathbb N$, $\Lambda$ is also a uniformly least favorable distribution for testing $H_0$ vs.~$h_1$ under $\mm=(\ms^n,\Theta,\vec\pi)$ with $n$ i.i.d.~samples.}

\begin{proof}  Let $\supp(\Lambda)=\{h_0^*\}$.  For any $n\in\mathbb N$ and any $h_0\in H_0$, we define a random variable $X_{n,h_0}:\ms^n\ra \mathbb R$, where for any $P_n\in\ms^n$, $\Pr(P_n)=\pi_{h_0}(P_n)=\prod_{V\in P_n}\pi_{h_0}(V)$, and $X_{n,h_0}(P_n)=\log \ratio_{h_0^*,h_1}$. It follows that 
$$X_{n,h_0}=\underbrace{X_{h_0}+ X_{h_0}+ \cdots+ X_{h_0}}_n$$ 

By Lemma~\ref{lem:dom},  for any $h_0\in H_0$, $X_{h_0^*}$ weakly dominates $X_{h_0}$. Because first-order stochastic dominance is preserved under convolution~\citep{Deelstra14:Risk}, we have that $X_{n,h_0^*}$ weakly dominates $X_{n,h_0}$. The lemma follows after applying Lemma~\ref{lem:dom}. \end{proof}

{
\vspace{2mm}
\noindent{\bf Remarks.} Lemma~\ref{lem:ext} is an extension of Theorem 2.3 by Reinhardt~\citet{Reinhardt1961:The-Use-of-Least} to finite models. Reinhardt's theorem requires that for any constant $t$, with measure $0$ we have $\pi_{h_0^*}(P)=t\pi_{h_1}(P)$. This is an important assumption in Reinhardt's proof because 
it assumes away cases with $\ratio(P)=k_\alpha$ so that the most powerful test is deterministic. Unfortunately, this assumption does not hold for finite models and we must deal with randomized tests.
}

\vspace{3mm}
\noindent{\bf Lemma~\ref{lem:CC'}} {\em  Under a Mallows' model, for any $\varphi$, any $K\in \mathbb N$, any $a\in \ma$, any $W\in\ml(\ma)$, and any $C',C\subseteq\ma$ such that $C$ dominates $C'$ w.r.t.~$W$, we have $\pi_{W}(\{P:w_P(C'\succ a)\geq K\})\leq \pi_{W}(\{P:w_P(C\succ a)\geq K\})$.}

\begin{proof} We first prove the lemma for a special case where $C$ and $C'$ differ in only one alternative, that is, $|C- C'|=1$. Let $c\in C$ such that $c\not\in C'$. Let $c'\in C'$ such that $c'\not\in C$. Because $C$ dominates $C'$ in $W$, we have $c\succ_{W}c'$. 

Let $\mcp=\{P\in\ml(\ma):w_P(C\succ a)\geq K\}$ and  $\mcp'=\{P\in\ml(\ma):w_P(C'\succ a)\geq K\}$. We define the following permutation $\mm$ over $\ml(\ma)$. For any $P\in \ml(\ma)$, if $c\succ_P a \succ_P c'$ then $\mm(P)$ is the ranking that is obtained from $P$ by switching $c$ and $c'$; otherwise $\mm(P)=P$. Because $|C- C'|=1$, it follows that for any $P\in \mcp- \mcp'$, we must have $c\succ_P a\succ_P c'$ and $(C- C')\succ_P a$. Therefore, $\mm(\mcp- \mcp')=\mcp'- \mcp$.

We now prove that $\pi_{W}(\mcp-\mcp')>\pi_{W}(\mcp'-\mcp)$.  For any $P\in\mcp-\mcp'$, we have  $c\succ_P a\succ_P c'$, which means that $\pi_{W}(P)\geq \pi_{W}(\mm(P))/\varphi$ because $c\succ _{W} c'$. Therefore, $\pi_{W}(\mcp-\mcp')>\pi_{W}(\mcp'-\mcp)$ because $\mm(\mcp-\mcp')=\mcp'-\mcp$. 

We have 
$
\pi_{W}(\mcp)
=\pi_{W}(\mcp\cap\mcp')+\pi_{W}(\mcp-\mcp')
\ge \pi_{W}(\mcp\cap\mcp')+\pi_{W}(\mcp'-\mcp)
=\pi_{W}(\mcp')
$.

Therefore, the lemma holds for the case where $|C- C'|=1$. For general $C$ and $C'$, because $C$ dominates $C'$, there exists a sequence of sets $C=C_0, C_1,\ldots, C_l=C'$ such that  for all $0\leq i\leq l-1$, (i) $C_i$ dominates $C_{i+1}$; (ii) $|C_i- C_{i+1}|=1$. It follows that 
$\pi_{W}(\{P:w_P(C\succ a)\geq K\})\ge \pi_{W}(\{P:w_P(C_1\succ a)\geq K\})\ge \cdots\ge \pi_{W}(\{P:w_P(C'\succ a)\geq K\})$.
\end{proof}

\Omit{
\vspace{3mm}
\noindent{\bf Theorem~\ref{thm:winnercvsm} (A most powerful non-winner test under Mallows).} {\em Given a  Mallows' model $\mallows$, for any alternative $a$, any ranking $h_1$ where $a$ is not ranked at the top, any $0<\alpha<1$, and any $n$, the following test is a level-$\alpha$ UMP for testing $\atopr$ vs.~$h_1$. For any $n$-profile $P_n$,
$$f_{\alpha,a,B}(P_n)=\left\{\begin{array}{ll}1&\text{if }w_{P_n}(B\succ a)>K_\alpha\\
0&\text{if }w_{P_n}(B\succ a)<K_\alpha\\ 
\Gamma_\alpha&\text{if }w_{P_n}(B\succ a)=K_\alpha
\end{array}\right.,$$ where $K_\alpha$ and $\Gamma_\alpha$ are chosen s.t.~the size of $f_{\alpha,a,B}$ is $\alpha$.}

\vspace{2mm}
\noindent{\bf Remarks.} We note that $f_{\alpha,a,B}$ does not depend on the ordering among alternatives in $B$ in $h_1$.  $f_{\alpha,a,B}$ is computed in the following way for any given profile $P_n$: we first build the weighted majority graph, then calculate the total weight $w_{P_n}(B\succ a)$ of all edges from $B$ to $a$. If the total weight is more than a threshold $K_\alpha$, then $H_0$ is rejected; if the total weight is less than $K_\alpha$, then $H_0$ is retained; otherwise $H_0$ is rejected with probability $\Gamma_\alpha$. This procedure is intuitive because a larger $w_{P_n}(B\succ a)$ corresponds to more evidence from the data that $a$ should be ranked below $B$, which means that it is less likely that the ground truth is in $H_0$, where $a$ is ranked above $B$.  $w_{P_n}(B\succ a)$ is called the {\em test statistic} and is easy to compute. The threshold $K_\alpha$ and the value $\Gamma_\alpha$ might  be hard to compute. In practice such $K_\alpha$ and $\Gamma_\alpha$ are pre-computed as a look-up table, and once $w_{P_n}(B\succ a)$ is computed, we can immediately obtain its $p$-value, which is the smallest $\alpha$ such that $K_\alpha=w_{P_n}(B\succ a)$. %
}

\Omit{
{\bf Theorem~\ref{thm:tvcm}.} Let $\mallows$ denote a Mallows' model with any $m\ge 2$, any $n\ge 2$, and any $\varphi$. There exists a UMP test for $H_0=\atopr$ vs.~$H_1$ for each $0<\alpha<1$ if and only if there exists $B\subseteq \ma$ such that $H_1\subseteq L_{B\succ a}$. Moreover, if $H_1\subseteq L_{B\succ a}$ then $f_{\alpha,a,B}$ defined in Theorem~\ref{thm:winnercvsm} is a UMP test.

\begin{proof} {The ``if'' part.} We note that $f_{\alpha,a,B}$ does not depend on the orderings among alternatives in $B$ in $h_1$. It follows that for all $h_1\in H_1$, $f_{\alpha,a,B}$ is a level-$\alpha$ most powerful test for $H_0$ vs.~$\{h_1\}$, which means that $f_{\alpha,a,B}$ is a UMP test.

{The ``only if'' part.} Suppose there exist $B, B'$ such that $B\ne B'$ and there exists two rankings $h_1^1=[B\succ a\succ \text{others}]$ and $h_1^2=[B'\succ a\succ \text{others}]$ in $H_1$. W.l.o.g.~suppose $B'- B\ne\emptyset$. Let $K_\alpha=n|B|-0.5$, $\Gamma_\alpha=0$, and let $f_{\alpha,a,B}$ denote the most powerful test for $H_0$ vs.~$h_1^1$ guaranteed by Theorem~\ref{thm:winnercvsm}. Because $K_\alpha$ is not an integer, there does not exist $P_n$ such that $w_{P_n}(B\succ a)=K_\alpha$. This means that $f_{\alpha,a,B}$ is the unique most powerful level-$\alpha$ test for $H_0$ vs.~$h_1^1$. We observe that for any $P_n$, $f_{\alpha,a,B}(P_n)$ is either $0$ or $1$, and $f_{\alpha,a,B}(P_n)=1$ if and only if $a$ is ranked above $B$ in all $n$ rankings in $P_n$. It follows that $f_{\alpha,a,B}$ must be the unique level-$\alpha$ UMP test for $H_0$ vs.~$H_1$. 

By Theorem~\ref{thm:winnercvsm}, any most powerful level-$\alpha$ test, in particular $f_{\alpha,a,B}$, must agree with $f_{\alpha,a,B'}$ except for the threshold cases $w_{P_n}(B'\succ a)=K_\alpha'$ for some $K_\alpha'$. Choose arbitrary $b'\in B'- B$ and $b\in B$.  Let $P_n^*$ be composed of $n$ copies of $[B\succ a\succ\others]$ and let $P_n'$ be composed of $n-1$ copies of $[b'\succ B\succ a\succ\others]$ and one copy of $[b'\succ (B-\{b\})\succ a\succ \others]$. Because $w_{P_n^*}(B\succ a)=n|B|>K_\alpha$, we have $f_{\alpha,a,B}(P_n^*)=1$. This means that the threshold $K_\alpha'$ for $g_{\alpha,a,B'}$ is no more than $w_{P_n^*}(B'\succ a)=n|B\cap B'|$. Because 
$n\ge 2$, we have $w_{P_n'}(B'\succ a)\ge  n(|B\cap B'|+1)-1>n|B\cap B'|=w_{P_n'}(B'\succ a)$, which means that $f_{\alpha,a,B}(P_n')=1$. However, $w_{P_n'}(B\succ a)=n|B|-1<n|B|$, which is a contradiction because for any profile $P_n$, $f_{\alpha,a,B}(P_n)=1$ if and only if $B\succ a$ in all $n$ rankings in $P_n$.
\end{proof}
}

\Omit{
{\bf Theorem~\ref{thm:winnerbottomtopm}.} Let $\mallows$ denote a Mallows' model with any $m\ge 2$, any $n$ and any $\varphi$. For any  any $\alpha$, $f_{\alpha,a}$ defined below is a UMP test for $H_0=L_{\others\succ a}$ vs.~$H_1=\atopr$. For any $n$-profile $P_n$,
$$f_{\alpha,a}(P_n)=\left\{\begin{array}{cc}1&\text{if }w_{P_n}( a\succ \others)>K_\alpha\\
0&\text{if }w_{P_n}(a\succ\others)<K_\alpha\\
\Gamma_\alpha&\text{if }w_{P_n}(a\succ \others)=K_\alpha
\end{array}\right.,$$
where $K_\alpha$ and $ \Gamma_\alpha$ are chosen such that the size of $f_{\alpha,a}$ is $\alpha$.
\begin{proof} We note that $f_{\alpha,a}$ is insensitive to permutations over $\ma-\{a\}$.  Therefore, to prove that $f_{\alpha,a}$ is a UMP test, it suffices to prove that for some $h_1\in H_1$, $f_{\alpha,a}$ is most powerful for $H_0$ vs.~$h_1$. Choose an arbitrary $h_1\in H_1$. Let $h_0^*\in H_0$ be the ranking that is obtained from $h_1$ by moving $a$ to the bottom position without changing the relative positions of the other alternatives. Similar to the proof of Theorem~\ref{thm:winnercvsm}, it is not hard to check that $f_{\alpha,a}$ is equivalent to the likelihood ratio test $\lr_{\alpha,h_0^*,h_1}$. Also because $f_{\alpha,a}$ is invariant to permutations over $\ma-\{a\}$, for any $h_0'\in H_0$ and any permutation $M$ over $\ma-\{a\}$, we have $\size(f_{\alpha,a},h_0')=\size(f_{\alpha,a},M(h_0'))$. In particular, let $M$ denote the permutation such that $M(h_0')=h_0^*$. We have $\size(f_{\alpha,a},h_0')=\size(f_{\alpha,a},h_0^*)$. It follows from Lemma~\ref{lem:hvs} that $f_{\alpha,a}$ is most powerful. This proves the theorem.
\end{proof}
}

\vspace{5mm}\noindent
{\bf Theorem~\ref{thm:tvcm} (Characterization of all UMP non-winner tests under Mallows).} {\em Given  a Mallows' model $\mallows$  with $m\ge 2$ and $n\ge 2$, there exists a UMP test for $H_0=\atopr$ vs.~$H_1$ for all $0<\alpha<1$ if and only if there exists $B\subseteq \ma$ such that $H_1\subseteq L_{B\succ a}$. 

Moreover, when $H_1\subseteq L_{B\succ a}$, $f_{\alpha,a,B}$ defined in Theorem~\ref{thm:winnercvsm} is a UMP test.}

\begin{proof} {The ``if'' part.} We note that $f_{\alpha,a,B}$ does not depend on the orderings among alternatives in $B$ in $h_1$. It follows that for all $h_1\in H_1$, $f_{\alpha,a,B}$ is a level-$\alpha$ most powerful test for $H_0$ vs.~$\{h_1\}$, which means that $f_{\alpha,a,B}$ is a UMP test.

{The ``only if'' part.} Suppose there exist $B, B'$ such that $B\ne B'$ and there exist two rankings $h_1^1=[B\succ a\succ \text{others}]$ and $h_1^2=[B'\succ a\succ \text{others}]$ in $H_1$. W.l.o.g.~suppose $B'- B\ne\emptyset$. Let $\alpha$ denote the number such that $K_\alpha=n|B|-0.5$, $\Gamma_\alpha=0$, and let $f_{\alpha,a,B}$ denote the most powerful test for $H_0$ vs.~$h_1^1$ guaranteed by Theorem~\ref{thm:winnercvsm}. Because $K_\alpha$ is not an integer, there does not exist $P_n$ such that $w_{P_n}(B\succ a)=K_\alpha$. This means that $f_{\alpha,a,B}$ is the unique most powerful level-$\alpha$ test for $H_0$ vs.~$h_1^1$. We observe that for any $P_n$, $f_{\alpha,a,B}(P_n)$ is either $0$ or $1$, and $f_{\alpha,a,B}(P_n)=1$ if and only if $a$ is ranked below $B$ in all $n$ rankings in $P_n$. It follows that $f_{\alpha,a,B}$ must be the unique level-$\alpha$ UMP test for $H_0$ vs.~$H_1$. 

By Theorem~\ref{thm:winnercvsm}, any most powerful level-$\alpha$ test, in particular $f_{\alpha,a,B}$, must agree with $f_{\alpha,a,B'}$ except for the threshold cases $w_{P_n}(B'\succ a)=K_\alpha'$ for some $K_\alpha'$. Choose arbitrary $b'\in B'- B$ and $b\in B$.  Let $P_n^*$ be composed of $n$ copies of $[B\succ a\succ\others]$ and let $P_n'$ be composed of $n-1$ copies of $[b'\succ B\succ a\succ\others]$ and one copy of $[b'\succ (B-\{b\})\succ a\succ \others]$. Because $w_{P_n^*}(B\succ a)=n|B|>K_\alpha$, we have $f_{\alpha,a,B}(P_n^*)=1$. This means that the threshold $K_\alpha'$ for $f_{\alpha,a,B'}$ is no more than $w_{P_n^*}(B'\succ a)=n|B\cap B'|$. Because 
$n\ge 2$, we have $w_{P_n'}(B'\succ a)\ge  n(|B\cap B'|+1)-1>n|B\cap B'|=w_{P_n'}(B'\succ a)$, which means that $f_{\alpha,a,B}(P_n')=1$. However, $w_{P_n'}(B\succ a)=n|B|-1<n|B|$, which is a contradiction because for any profile $P_n$, $f_{\alpha,a,B}(P_n)=1$ if and only if $B\succ a$ in all $n$ rankings in $P_n$. 
\end{proof}

{\vspace{5mm}\noindent \bf Theorem~\ref{thm:mallowswinnernonex}.} Let $\mallows$ denote a  Mallows' model with $n=1$, any $m\ge 4$, and any $\varphi< 1/m$. There exists $0<\alpha<1$ such that no level-$\alpha$ UMP test exists for $H_0=(\ml(\ma)- H_1)$ vs.~$H_1=\atopr$.

\begin{proof} By Lemma~\ref{lem:bordatest}, if a UMP test exists then $\bar f_{\alpha,a}$ is also a UMP test. Therefore, it suffices to prove that $\bar f_{\alpha,a}$ is not a level-$\alpha$ UMP test. To this end, we explicitly construct a test $f$ and prove that the rankings assigned value $1$ are more cost-effective than that under $\bar f_{\alpha,a}$.


Let $V_1,V_2,\ldots,V_m, V_2'\in\ml(\ma)$ denote $m+1$ rankings defined as follows. For any $j\le m$, let $V_j=[a_j\succ\others]$, where alternatives in ``$\others$'' are ranked w.r.t.~the increasing order of their subscripts. In other words, $V_j$ is obtained from $V_1$ by raising alternative $a_j$ to the top position. We let $V_3'=[a_3\succ a_1\succ a_4\succ a_2\succ \others]$.

We consider the following critical function $f$. For any $V\in\ml_{a\succ\others}$, we let $f(V)=1$. For any $V_j$ with $j\ne 3$, let $f(V_j)=1$. We then let $f(V_3)=f(V_3')=\frac{1+\varphi^{m}}{1+\varphi}$. Let $\alpha$ denote the size of $f$ at $V_2$. That is, $\alpha=\size(f,V_2)$. Let $T=\pi_{V_2}(\ml_{a\succ\others})$. It follows that 
\begin{align*}
&\alpha-T\\
\propto &\varphi^0+\frac{1+\varphi^m}{1+\varphi}(\varphi^{\kt(V_2,V_3)}+\varphi^{\kt(V_2,V_3')})+\sum_{j=5}^m\varphi^{\kt(V_2,V_j)}\\
=&1+\frac{1+\varphi^m}{1+\varphi}(\varphi^3+\varphi^4)+\varphi^4+ \sum_{j=5}^m\varphi^{\kt(V_2,V_j)}\\
>&1+\varphi^3+\varphi^4 + \varphi^5
\end{align*}

\begin{figure}[htp]
\centering
\includegraphics[trim=0cm 14cm 17cm 0cm, clip=true, width=.26\textwidth]{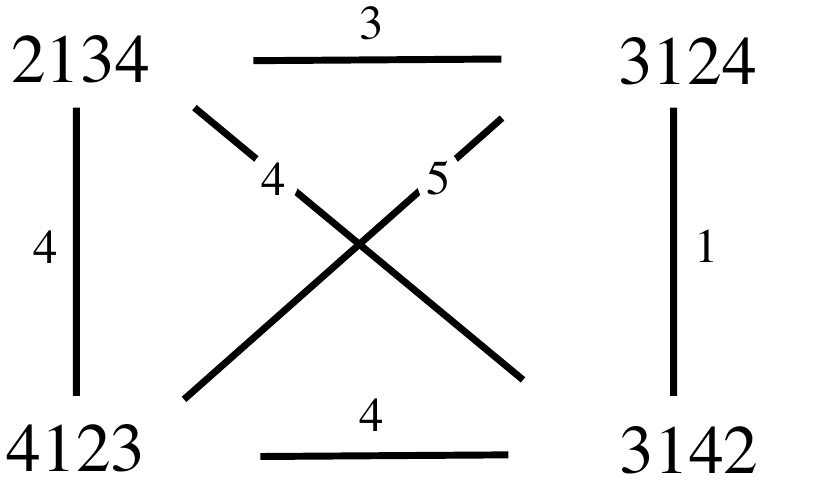}
\caption{Kentall-Tau distance for some rankings over four alternatives.\label{fig:kt4}} 
\end{figure}

For any $j,j^*\ge 2$ such that $j\ne j^*$, it is not hard to verify that $\kt(V_j,V_{j^*})=j+j^*-2$. Moreover, $\kt(V_3,V_3')=1$, $\kt(V_2,V_3')=4$, $\kt(V_4,V_3')=4$, and for any $j\ge 5$, we have $\kt(V_3',V_j)=j+2$. Therefore, we have the following calculations of $\size(f,V_3)$, $\size(f,V_3')$, and $\size(f,V_4)$ (see Figure~\ref{fig:kt4} for distances between $V_2,V_3,V_3',V_4$). We note that $T=\pi_{V_2}(\ml_{a\succ\others})=\pi_{V_3}(\ml_{a\succ\others})=\pi_{V_3'}(\ml_{a\succ\others})=\pi_{V_4}(\ml_{a\succ\others})$ due to symmetry.

$\hfill
\size(f, {V_3})-T\propto\varphi^3+\frac{1+\varphi^m}{1+\varphi}(1+\varphi)+\varphi^5+\sum_{j=5}\varphi^{\kt(V_3,V_j)}\le 1+\varphi^3+(m-3)\varphi^5 \hfill
$

$\hfill
\size(f, {V_3'})-T\propto\varphi^4+\frac{1+\varphi^m}{1+\varphi}(1+\varphi)+\varphi^4+\sum_{j=5}\varphi^{\kt(V_3',V_j)}\le 1+2\varphi^4+ (m-4)\varphi^6\hfill
$

$\hfill
\size(f, {V_4})-T\propto\varphi^4+\frac{1+\varphi^m}{1+\varphi}(\varphi^4+\varphi^5)+1+\sum_{j=5}\varphi^{\kt(V_4,V_j)}\le 1+2\varphi^4+ (m-4)\varphi^7\hfill
$

For any other $h_0'\in H_0$, we have $\size(f,h_0')-T\le m \varphi$. Because $\varphi<1/m$, we have $\size(f)=\alpha$. Let $P$ denote a profile that is composed of $\{V_2,V_4,\ldots,V_m\}\cup \frac{1+\varphi^m}{1+\varphi}\{V_3,V_3'\}$. We next prove that $\ratio_{V_2,V_1}(P)>\ratio_{V_2,V_1}(T_{m-2})$. Let $Z_{m}=\prod_{l=1}^m\frac{1-\varphi^m}{1-\varphi}$ denote the Mallows normalization factor for $m$ alternatives. We have
\begin{align*}
&\ratio_{V_2,V_1}(T_{m-2})=\frac{\pi_{V_1}(T_{m-2})}{\pi_{V_2}(T_{m-2})}\\
=&\frac{\varphi Z_{m-1}}{Z_{m-2}+\varphi^2(Z_{m-1}-Z_{m-2})}\\
=&\frac{\varphi \frac{Z_{m-1}}{Z_{m-2}}}{1+\varphi^2(\frac{Z_{m-1}}{Z_{m-2}}-1)}
=\frac{\varphi+\varphi^2+\cdots+\varphi^{m-1}}{1+\varphi^3+\varphi^4+\cdots+\varphi^m}<\frac{1}{\varphi}
\end{align*}
\begin{align*}
&\ratio_{V_2,V_1}(P)=\frac{\varphi+\varphi^2+\cdots+\varphi^{m-1}+\varphi^{m+2}}{1+\varphi^3+\varphi^4+\cdots+\varphi^m+\varphi^{m+3}}\\
>&\frac{\varphi+\varphi^2+\cdots+\varphi^{m-1}}{1+\varphi^3+\varphi^4+\cdots+\varphi^m}\\
=&\ratio_{V_2,V_1}(T_{m-2})
\end{align*}
We note that $\size(\bar f_{\alpha,a},V_2)=\alpha$. This means that $\power(\bar f_{\alpha,a},V_1)=\pi_{V_1}(T_{m-1})+\alpha\ratio_{T_2,T_1}(T_{m-2})<\pi_{V_1}(T_{m-1})+\alpha\ratio_{T_2,T_1}(P)=\power (f,V_1)$. This means that $\bar f_{\alpha,a}$ is a not a level-$\alpha$ UMP. The theorem follows after Lemma~\ref{lem:bordatest}.
\end{proof}

{\vspace{5mm}\noindent\bf Theorem~\ref{thm:mallowswinnerexist}.} Let $\mallows$ denote a Mallows' model with $n=1$ and any $m\ge 4$. There exists  $\epsilon>0$ such that for any $\varphi> 1-\epsilon$ and any $\alpha$, $\bar f_{\alpha,a}$ is a UMP test for  $H_0=(\ml(\ma)- H_1)$ vs.~$H_1=\atopr$.

\begin{proof} We first verify that when $K_\alpha=m-1$, $\bar f_{\alpha,a}$ is a UMP test.  For any $h_1\in H_1$, let $h_0^*\in H_0$ denote the ranking that is obtained from $h_1$ by moving $a$ down for one position. It is not hard to check that for any $V\in \ml(\ma)$, $\ratio_{h_0^*,h_1}(V)\le 1/\varphi$, and for all $V\in H_1$ we have $\ratio_{h_0^*,h_1}(V)= 1/\varphi$. This means that for any level-$\alpha$ test for $H_0$ vs.~$h_1$, the power cannot be more than $\alpha/\varphi$. We note that  $\bar f_{\alpha,a}$ is a level-$\alpha$ test whose power is exactly $\alpha/\varphi$. This means that for all $h_1\in H_1$, $\bar f_{\alpha,a}$ is a most powerful test for $H_0$ vs.~$h_1$. Therefore, when $K_\alpha=m-1$, $\bar f_{\alpha,a}$ is a UMP test.

For any $\alpha$ such that $K_\alpha\le m-2$, we will prove that for any $h_1\in H_1$, $\bar f_{\alpha,a}$ is a most powerful level-$\alpha$ test for $H_0$ vs.~$h_1$. This is done in the following steps. Step 1.~Find a least favorable distribution $\Lambda_\alpha^{h_1}$ whose support is the set of all rankings where $a$ is ranked at the second position. Step 2.~Verify that $\bar f_{\alpha,a}$ is the likelihood ratio test w.r.t.~$\Lambda_\alpha^{h_1}$, and step 3. verify that the two conditions in Lemma~\ref{lem:hvs} holds for $\Lambda_\alpha^{h_1}$. 
 
{\bf Step 1.} The main challenge is that in general there does not exist a uniformly least favorable distribution. For different $\alpha$ we define different $\Lambda_\alpha^{h_1}$ as follows. For any $\alpha$, we let $s_\alpha$ denote the smallest Borda score of the ranking $V$ such that $\bar f_{\alpha,a}(V)>0$. We have that $s_\alpha\le m-2$. Let the support of $\Lambda_\alpha^{h_1}$ be $T_{m-2}$, which is  the set of rankings where $a$ is ranked at the second position. We will solve the following system of linear equations to determine $\Lambda_\alpha^{h_1}$. For any $h_0^*\in T_{m-2}$ there is a variable $x[h_0,s_\alpha]$.
\begin{align*}\forall V\in T_{s_{\alpha}}, \sum_{h_0^*\in T_{m-2}} \ratio^{-1}_{h_0^*,h_1}(V)\cdot  x[h_0^*,s_\alpha]=m& &(\text{LP}_{s_\alpha}^{h_1})
\end{align*}
We note that as $\varphi\ra 1$, $\ratio^{-1}_{h_0^*,h_1}(V)=\frac{\pi_{h_0^*}(V)}{\pi_{h_1}(V)}=\varphi^{\kt(h_0^*,V)-\kt(h_1,V)}\ra 1$. Because there are $m$ variables and $m$ equations, as $\varphi\ra 1$ the solution to $\text{LP}_{s_\alpha}^{h_1}$ converges to $\vec 1$. Therefore, there exists $\epsilon>0$ such that for all $\varphi>1-\epsilon$, the linear systems $\{\text{LP}_s^{h_1}:s\le m-1, h_1\in H_1\}$ all have strictly positive solutions. Let $\{x^*[h_0^*,s_\alpha]|V\in T_{s_\alpha} \}$ denote a solution to $\text{LP}_{s_\alpha}^{h_1}$. For any $h_0^*\in T_{m-2}$, we let $\Lambda_\alpha^{h_1}(h_0^*)= \frac{x^*[h_0^*,s_\alpha]}{\sum_{h_0\in T_{m-2}} x^*[h_0,s_\alpha]}$.

{\bf Step 2.} To simplify notation we let $\lr_{\alpha}=\lr_{\alpha,\Lambda_\alpha^{h_1},h_1}$ denote the likelihood ratio test and let $\ratio=\ratio_{\Lambda_\alpha^{h_1},h_1}$ denote the likelihood ratio function w.r.t.~distribution $\Lambda_\alpha^{h_1}$ for $H_0$ vs.~$h_1$. To prove $\lr_\alpha=\bar f_{\alpha,a}$, we first prove that for any $V\in \ml(\ma)$ where $a$ is not ranked at the bottom position, $\ratio(V)>\ratio(\text{Down}^1_a(V))$, where we recall that $\text{Down}^1_a(V)$ is the ranking obtained from $V$ by moving $a$ down for one position.
\begin{align*}
&\frac{\sum_{h_0^*\in T_{m-2}}\Lambda_\alpha^{h_1}(h_0^*) \cdot \pi_{h_0^*}(\text{Down}_a^1(V))}{\sum_{h_0^*\in T_{m-2}}\Lambda_\alpha^{h_1}(h_0^*)\cdot \pi_{h_0^*}(V)}\\
=&\frac{\sum_{h_0^*\in T_{m-2}}\Lambda_\alpha^{h_1}(h_0^*) \cdot \varphi^{\kt(h_0^*,\text{Down}_a^1(V))}}{\sum_{h_0^*\in T_{m-2}}\Lambda_\alpha^{h_1}(h_0^*)\cdot \varphi^{\kt(h_0^*,V)}}\\
>&\frac{\sum_{h_0^*\in T_{m-2}}\Lambda_\alpha^{h_1}(h_0^*) \cdot \varphi^{\kt(h_0^*,V)}\cdot \varphi^{\kt(V,\text{Down}_a^1(V))}}{\sum_{h_0^*\in T_{m-2}}\Lambda_\alpha^{h_1}(h_0^*)\cdot \varphi^{\kt(h_0^*,V)}}\\
=&\varphi=\frac{\pi_{h_1}(\text{Down}_a^1(V))}{\pi_{h_1}(V)}
\end{align*}
The strict inequality holds because of (1) triangle inequality for Kentall-Tau distance, and (2) for any ranking $V$ where the top-ranked alternative in $h_0^*$ is ranked right below $a$, we have $\kt(h_0^*,V)+\kt(V,\text{Down}_a^1(V))>\kt(h_0^*,\text{Down}_a^1(V))$, and (3) for all $h_0^*\in T_{m-2}$, $\Lambda_\alpha^{h_1}(h_0^*)>0$.

It follows from the strict inequality that 
\begin{align*}
\ratio(V)=&\frac{\pi_{h_1}(V)}{\sum_{h_0^*\in T_{m-2}}\Lambda_\alpha^{h_1}(h_0^*)\cdot \pi_{h_0^*}(V)}\\
>&\frac{\pi_{h_1}(\text{Down}_a^1(V))}{\sum_{h_0^*\in T_{m-2}}\Lambda_\alpha^{h_1}(h_0^*) \cdot \pi_{h_0^*}(\text{Down}_a^1(V))}\\
=& \ratio(\text{Down}^1_a(V))
\end{align*}

Moreover, for any $V,V'\in T_{s_\alpha}$ we have $\ratio(V)=\ratio(V')$ by verifying $\text{LP}_{s_\alpha}^{h_1}$. Therefore, for any $V\in T_i$ with $i<s_\alpha$, we can move up the position of $a$ one by one until we reach the $(m-s_\alpha)$-th position. Let $V^*\in T_{s_\alpha}$ denote this ranking. It follows that $\ratio(V)<\ratio(V^*)$. Similarly for any $V'\in T_i$ with $i>s_\alpha$ we have $\ratio(V')>\ratio(V^*)$ for any $V^*\in T_{s_\alpha}$. This means that for any $V$ where $a$ is ranked above the $(m-s_\alpha)$-th position, we have $\lr_{\alpha}(V)=1$; for any  $V$ where $a$ is ranked below the $(m-s_\alpha)$-th position, we have $\lr_{\alpha}(V)=0$; for any  $V$ where $a$ is ranked at the $(m-s_\alpha)$-th position, we have that $\lr_{\alpha}(V)$ is the same and is between $0$ and $1$. It follows that $\lr_\alpha=\bar f_{\alpha,a}$.

{\bf Step 3.} Due to the symmetry $f_{\alpha,a}$ among alternatives in $\ma- \{a\}$, for any $i\le m-2$ and any $h_0,h_0'\in T_i$, we have $\size(\bar f_{\alpha,a},h_0)=\size(\bar f_{\alpha,a},h_0')$. Therefore, condition (i) in Lemma~\ref{lem:hvs} is satisfied. Choose arbitrary $h_0^{m-2}\in T_{m-2}$. For any $i\le m-3$, let $h_0^{i}\in T_i$ denote the ranking obtained from  $h_0^{i+1}$ by moving $a$ down for one position. To verify condition (ii) in Lemma~\ref{lem:hvs}, it suffices to prove that for any $i\le m-3$ and any $K\in \mathbb N$, we have 
\begin{equation}\label{eq:bordadom}
\begin{array}{c}\pi_{h_0^{m-2}}(\{V:\borda_a(V)\ge K\})\\ \ge \pi_{h_0^{i}}(\{V:\borda_a(V)\ge K\})\end{array}
\end{equation}
We will prove a slightly stronger lemma.

\begin{lem}\label{lem:bordadom}Under Mallows' model, for any $m$, any $\varphi$, any  $W\in\ml(\ma)$, any $b,c\in\ma$ such that $b\succ_W c$, and any $K$, we have
$\pi_{W}(\{V:\borda_b(V)\ge K\})\ge \pi_{W}(\{V:\borda_c(V)\ge K\})$.
\end{lem}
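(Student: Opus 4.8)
The plan is to follow the same template as Lemma~\ref{lem:CC'}: construct an explicit bijection from the rankings that are ``bad'' for $c$ onto the rankings that are ``bad'' for $b$ under which each ranking's $\pi_W$-weight does not decrease. Write $\mcp_b=\{V\in\ml(\ma):\borda_b(V)\ge K\}$ and $\mcp_c=\{V\in\ml(\ma):\borda_c(V)\ge K\}$. Since $\pi_W$ agrees on the common part $\mcp_b\cap\mcp_c$, the claim reduces to showing $\pi_W(\mcp_b\setminus\mcp_c)\ge\pi_W(\mcp_c\setminus\mcp_b)$.

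First I would introduce the involution $\sigma:\ml(\ma)\ra\ml(\ma)$ that exchanges the positions of $b$ and $c$ and leaves every other alternative in place. If $V\in\mcp_c\setminus\mcp_b$ then $\borda_c(V)\ge K>\borda_b(V)$, so $c$ sits strictly above $b$ in $V$; after the swap $b$ takes $c$'s higher slot and $c$ takes $b$'s lower slot, hence $\borda_b(\sigma(V))=\borda_c(V)\ge K$ and $\borda_c(\sigma(V))=\borda_b(V)<K$, i.e.~$\sigma(V)\in\mcp_b\setminus\mcp_c$. As $\sigma$ is an involution, it is a bijection between $\mcp_c\setminus\mcp_b$ and $\mcp_b\setminus\mcp_c$, and it remains to verify $\pi_W(\sigma(V))\ge\pi_W(V)$ for each such $V$; by the form of Mallows' model and $0<\varphi<1$ this is exactly $\kt(W,\sigma(V))\le\kt(W,V)$.

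The heart of the argument is this distance comparison. Fix $V\in\mcp_c\setminus\mcp_b$ and let $S$ be the set of alternatives lying strictly between $c$ and $b$ in $V$. The rankings $V$ and $\sigma(V)$ differ only on the pair $\{b,c\}$ and on the pairs $\{b,s\},\{c,s\}$ for $s\in S$, so $\kt(W,V)-\kt(W,\sigma(V))$ splits as a sum over these pairs. The pair $\{b,c\}$ contributes $+1$, since $b\succ_W c$ whereas $c\succ_V b$ and $b\succ_{\sigma(V)}c$. For each $s\in S$ I would split on the position of $s$ relative to $b,c$ in $W$ --- the only cases are $s\succ_W b\succ_W c$, $b\succ_W s\succ_W c$, and $b\succ_W c\succ_W s$ --- and check that the combined contribution of $\{b,s\}$ and $\{c,s\}$ is $0$, $+2$, and $0$ respectively, hence always nonnegative. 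Therefore $\kt(W,V)-\kt(W,\sigma(V))\ge1$, which yields $\pi_W(\sigma(V))\ge\pi_W(V)$, and summing over $V\in\mcp_c\setminus\mcp_b$ gives $\pi_W(\mcp_b\setminus\mcp_c)\ge\pi_W(\mcp_c\setminus\mcp_b)$, completing the proof. I expect the only real bookkeeping to be the three-way case check for $s\in S$; note that, in contrast with Lemma~\ref{lem:CC'}, no chaining through intermediate sets is needed here, because $b$ and $c$ are single alternatives and the swap $\sigma$ already does everything in one step.
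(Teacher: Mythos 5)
Your proof is correct and rests on the same core idea as the paper's: the transposition of $b$ and $c$ is a bijection from $\mcp_c\setminus\mcp_b$ onto $\mcp_b\setminus\mcp_c$ under which the Mallows weight does not decrease, and the shared part $\mcp_b\cap\mcp_c$ cancels. The only difference is in how the Kendall-tau comparison is established: the paper first reduces to the case where $b$ and $c$ are \emph{adjacent} in $W$ (so that the swap changes $\kt(\cdot,W)$ by exactly one, since no third alternative's pairs are affected in $W$) and then chains the inequality along a path of adjacent transpositions from $b$ down to $c$; you instead handle an arbitrary pair $b\succ_W c$ in a single step, absorbing the alternatives $s$ lying between $c$ and $b$ in $V$ via the three-way case analysis, which correctly yields $\kt(W,V)-\kt(W,\sigma(V))=1+\sum_{s}(\text{$0$ or $2$})\ge 1$. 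Both routes are valid; yours trades the adjacency reduction for a slightly longer local computation, and your closing remark is accurate in that your version genuinely needs no chaining, whereas the paper's does.
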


\begin{proof} The proof is similar to the proof of Lemma~\ref{lem:CC'}. It suffices to prove the lemma for the case where $b$ and $c$ are adjacent in $W$.  Let $\mcp=\{V\in\ml(\ma):\borda_b(V)\geq K\}$ and  $\mcp'=\{V\in\ml(\ma):\borda_c(V)\geq K\}$.  It follows that $\mcp\cap\mcp'$ is the set of rankings where both $b$ and $c$ are ranked  within top $m-K$ positions; $\mcp-\mcp'$ is the set of rankings where $b$ is ranked within top $m-K$ positions but $c$ is not; and $\mcp'-\mcp$ is the set of rankings where $c$ is ranked within top $m-K$ positions but $b$ is not. We let  $\mm$ be a permutation that switches $b$ and $c$. It is not hard to check that $\mm$ is a bijection between $(\mcp-\mcp')$ and $(\mcp'-\mcp)$, and because $b$ and $c$ are adjacent in $W$, for any $V\in \mcp$, we have $\kt(M(V),W)=\kt(V,W)+1$, which means that $\pi_W(V)=\pi(M(V))/\varphi$. Therefore, we have

\begin{align*}
&\pi_{W}(\{V:\borda_b(V)\ge K\})-\pi_{W}(\{V:\borda_c(V)\ge K\})\\
=& \pi_W(\mcp)-\pi_W(\mcp')= \pi_W(\mcp-\mcp')-\pi_W(\mcp'-\mcp)\\
=&\pi_W(\mcp-\mcp')-\pi_W(M(\mcp-\mcp'))\\
=&(\frac{1}{\varphi}-1)\pi_W(\mcp-\mcp') \ge 0
\end{align*}
This proves the lemma. 
\end{proof}

Let $W$ be an arbitrary ranking and let $M_i$ denote a permutation such that $M_i(h_0^i)=W$. We have $\pi_{h_0^{i}}(\{V:\borda_a(V)\ge K\})=\pi_{M_i(h_0^{i})}(\{V:\borda_{M_i(a)}(V)\ge K\})$. We note that $M_i(a)$ is the alternative that is ranked at the $(m-i)$-th position in $W$. Inequality (\ref{eq:bordadom}) follows after applying Lemma~\ref{lem:bordadom}. This means that condition (ii) in Lemma~\ref{lem:hvs} is also satisfied. Therefore, by Lemma~\ref{lem:hvs}, $\bar f_{\alpha,a}$ is a level-$\alpha$ most powerful test for $H_0$ vs.~$h_1$. Since $\bar f_{\alpha,a}$ does not depend on $h_1$, it is a level-$\alpha$ UMP test for $H_0$ vs.~$H_1$.
\end{proof}

\Omit{\begin{ex}\label{ex:umpm3} Let $\mm$ denote a Mallows' model with $m=3$ and $n=1$. Let $\ma=\{1,2,3\}$, $h_1=[1\succ 2\succ 3]$ and let $\Lambda$ denote the uniform distribution over $\{[2\succ 1\succ 3], [1\succ 3\succ 2]\}$. We will apply Lemma~\ref{lem:dom} to prove that $\Lambda$ is a uniformly least favorable distribution for $H_0=(\ml(\ma)-\{[1\succ 2\succ 3]\})$ vs.~$[1\succ 2\succ 3]$. The likelihood ratios of all rankings are summarized in Table~\ref{tab:ratio} in the increasing order.
\begin{table}[htp]
\centering
\begin{tabular}{|c|c|c|c|}
\hline $V$ & $3\succ 2\succ 1$ & other rankings & $1\succ 2\succ 3$\\
\hline $\ratio_{\Lambda,1\succ 2\succ 3}(V)$:&$\varphi$&$\frac{2\varphi}{1+\varphi^2}$&$\frac{1}{\varphi}$\\
\hline
\end{tabular}
\caption{Likelihood ratios.\label{tab:ratio}\vspace{-6mm}}
\end{table}
Therefore, for any $h_1\in H_0$, $X_{h_0}^\Lambda$ takes three values: $\log \frac{1}{\varphi}$, $\log \frac{2\varphi}{1+\varphi^2}$, and $\log {\varphi}$. The probabilities for the five random variables taking these three values  are summarized in Table~\ref{tab:rv3}.
\begin{table}[htp]
\centering
\begin{tabular}{|c|c|c|c|}
\hline  &$\log \varphi$&$\log \frac{2\varphi}{1+\varphi^2}$&$\log \frac{1}{\varphi}$\\
\hline $X_{1\succ 3\succ 2}^\Lambda$ and $X_{2\succ 1\succ 3}^\Lambda$& $\varphi^2/Z$&  $(1+\varphi+\varphi^2+\varphi^3)/Z$ & $\varphi/Z$\\
\hline $X_{2\succ 3\succ 1}^\Lambda$ and  $X_{3\succ 1\succ 2}^\Lambda$& $\varphi/Z$&  $(1+\varphi+\varphi^2+\varphi^3)/Z$ & $\varphi^2/Z$\\
\hline $X_{3\succ 2\succ 1}^\Lambda$& $1/Z$&  $2(\varphi+\varphi^2)/Z$ & $\varphi^3/Z$\\
\hline
\end{tabular}
\vspace{2mm}
\caption{$X_{h_0}^\Lambda$ for all $h_0\in H_0$, where $Z$ is the normalization factor.\label{tab:rv3}}
\vspace{-6mm}
\end{table}

Because $0<\varphi<1$, it is not hard to verify that $X_{1\succ 3\succ 2}^\Lambda$ and $X_{2\succ 1\succ 3}^\Lambda$ weakly dominates other random variables. By Lemma~\ref{lem:dom}, $\Lambda$ is a uniformly least favorable distribution. \hfill$\blacksquare$
\end{ex}
}

{\vspace{5mm}\noindent\bf Lemma~\ref{lem:ind}.}
For any $\mm_X$ and $\mm_Y$, suppose $\Lambda_X$ is a least favorable distribution for composite vs.~simple test ($H_{0,X}$ vs.~$x_1$) under $\mm_X$. Given $y_1\in \Theta_Y$, let $\Lambda^*$ be the distribution over $H_{0,X}\times\Theta_Y$ where for all $x\in H_{0,X}$, $\Lambda^*(x,y_1)=\Lambda_X(x)$. Then $\Lambda^*$ is a least favorable distribution for $H_{0,X}\times \Theta_Y$ vs.~$(x_1,y_1)$ under $\mm_X\otimes \mm_Y$.

\begin{proof} Let $x_0^1,\ldots,x_0^K\in \Theta_X$ denote the support of $\Lambda_X$.  The theorem is proved by applying Lemma~\ref{lem:hvs}. For any $0<\alpha< 1$ and any $P=(P_X,P_Y)\in\ms_X\times\ms_Y$, we have the following calculation. In this proof $\ratio$ stands for $\ratio_{\Lambda^*,(x_1,y_1)}$ and $\lr_{\alpha}$ stands for $\lr_{\alpha,\Lambda^*,(x_1,y_1)}$.
\begin{align*}
\ratio(P_X,P_Y)
=&\frac{\pi_{x_1,y_1}(P)}{\sum_{k=1}^K\Lambda^*(x_0^k,y_1)\pi_{(x_0^k,y_1)}(P)}\\
=&\frac{\pi_{x_1}(P_X)\cdot \pi_{y_1}(P_Y)}{\sum_{k=1}^K\Lambda^*(x^k_{0},y_1)\pi_{x^k_{0}}(P_X)\cdot\pi_{y_1}(P_Y)}\\
=&\frac{\pi_{x_1}(P_X)}{\sum_{k=1}^K\Lambda(x_0^k)\pi^k_{x_0}(P_X)}
=\ratio_{\Lambda,x_1}(P_X)
\end{align*}
It follows that for any pair of samples $(P_X,P_Y), (P_X',P_Y')\in\ms_X\times \ms_Y$, $\ratio(P_X,P_Y)\ge \ratio(P_X',P_Y')$ if and only if $\ratio_{\Lambda,x}(P_X)\ge \ratio_{\Lambda,x}(P_X')$. This means that for any $(P_X,P_Y)$, $\lr_{\alpha}(P_X,P_Y)=\lr_{\alpha,\Lambda,x_1}(P_X)$. Therefore, for any $x_0\in H_{0,X}$, we have
\begin{align*}
&\size(\lr_\alpha,(x_0,y_1))\\
=&\sum_{(P_X,P_Y)\in\ms_X\times\ms_Y} \pi_{x_0}(P_X) \pi_{y_1}(P_Y)\lr_\alpha(P_X,P_Y)\\
=&\sum_{(P_X,P_Y)\in\ms_X\times\ms_Y} \pi_{x_0}(P_X) \pi_{y_1}(P_Y)\lr_{\alpha,\Lambda,x_1}(P_X)\\
=&\sum_{P_X\in\ms_X} \pi_{x_0}(P_X)\lr_{\alpha,\Lambda,x_1}(P_X)\\
=&\size(\lr_{\alpha,\Lambda,x_1},x_0)
\end{align*}
Therefore, by Lemma~\ref{lem:hvs}, for any $(x_0^*,y_1)\in\supp(\Lambda^*)$, we have $\size(\lr_\alpha,(x_0,y_1))=\size(\lr_{\alpha,\Lambda,x_1},x_0)=\alpha$ because $x_0^*\in \supp(\Lambda)$; for any $(x_0,y)\in H_{0,X}\times \Theta_Y$, we have $\size(\lr_\alpha,(x_0,y))=\size(\lr_{\alpha,\Lambda,x_1},x_0)\le \alpha$. This means that the two conditions in Lemma~\ref{lem:hvs} are satisfies, which proves the theorem.
\end{proof}

{\vspace{5mm}\noindent\bf Lemma~\ref{lem:extdomain}.} {\em For any model $\mm_X$ and any $t\in\mathbb N$, suppose $\Lambda$ is a uniformly least favorable distribution for composite vs.~simple test ($H_0$ vs.~$h_1$) under $\mm_X$. Then $\ext(\Lambda,h_1,t)$ is a uniformly least favorable distribution for $\ext(H_0,h_1,t)$ vs.~$\vec h_1$ in $(\mm_X)^t$.}

\begin{proof} Again the proof is done by applying Lemma~\ref{lem:hvs}. 
We first prove a claim that characterizes samples whose likelihood ratio is no more than a given threshold. To this end, it is convenient to use the inverse of the likelihood ratio. To simplify notation, in this proof we let $\Lambda^*=\ext(\Lambda,h_1,t)$, let $H_0^*=\ext(H_0,h_1,t)$, let $\lr_\alpha=\lr_{\alpha,\Lambda^*,\vec h_1}$, $\ratio=\ratio_{\Lambda^*,\vec h_1}$. 

\begin{claim}\label{claim:invratio} For any $k_{\alpha}$ and any $\vec x\in \ms^t$, $\sum_{j=1}^t\ratio^{-1}_{\Lambda, h_1}(x_j)=t\cdot \ratio^{-1}(\vec x)$.
\end{claim}
\begin{proof} we have
$\ratio^{-1}(\vec x)=\frac{1}{t}\cdot \frac{\sum_{j=1}^t\sum_{h_0\in H_0}\Lambda(h_0)\cdot \pi_{(h_0,[\vec h_1]_{-j})}(\vec x)}{\pi_{\vec h_1}(\vec x)}$\\
$
=\frac{1}{t}\cdot \frac{\sum_{j=1}^t\sum_{h_0\in H_0}\Lambda(h_0)\cdot \pi_{h_0}(x_j)\cdot \pi_{[\vec h_1]_{-j}}(x_j)}{\pi_{h_1}(x_j)\cdot \pi_{[\vec h_1]_{-j}}(x_j)}$\\ 
$
=\frac{1}{t}\sum_{j=1}^t\ratio^{-1}_{\Lambda,h_1}(x_j)$ 
\end{proof}

The next lemma proves the following:
For any $\vec z\in H_0^*$ and any $j\le t$, suppose the $j$-th component is not in $\supp(\Lambda)\cup\{h_1\}$. If we fix all components except $j$-th in $\vec z$ and change the $j$-th component to $h_0^*\in \supp(\Lambda)$, then the size of $\lr_\alpha$ will increase. If we further change the $j$-th component to $h_1$, then the size of $\lr_\alpha$ will further increase. 

\begin{lem}\label{lem:change} For  any $0\le \alpha\le 1$, any $j\le t$, any $\vec z_{-j}\in \Theta^{t-1}$, any $h_0\in H_0$, and any $h_0^*\in \supp(\Lambda)$, we have $\size(\lr_\alpha,(h_0,\vec z_{-j}))\le \size(\lr_\alpha,(h_0^*,\vec z_{-j}))\le \size(\lr_\alpha,(h_1,\vec z_{-j}))$.
\end{lem}
\begin{proof} For any $\vec z_{-j}\in \Theta^{n-1}$, we have 
\begin{align*}
&\size(\lr_\alpha,(h_0,\vec z_{-j}))=\pi_{(h_0,\vec z_{-j})}(\{\vec x\in\ms^t:\ratio(\vec x)> k_\alpha^*\})\\
&+\gamma_\alpha^* \pi_{(h_0,\vec z_{-j})}(\{\vec x\in\ms^t:\ratio(\vec x)= k_\alpha^*\})
\end{align*}
For any $\vec x$, we let $\text{Sum}(\vec x)=\sum_{l=1}^t\ratio^{-1}_{\Lambda,h_1}(x_l)$ and for any $j\le t$, we let $\text{Sum}(\vec x_{-j})=\sum_{l\ne j}\ratio^{-1}_{\Lambda,h_1}(x_l)$. By Claim~\ref{claim:invratio}, we have 
\begin{align*}
&\pi_{(h_0,\vec z_{-j})}(\{\vec x\in\ms^t:\ratio(\vec x)>k_\alpha^*\})\\
=&\pi_{(h_0,\vec z_{-j})}(\{\vec x\in\ms^t:\text{Sum}(\vec x)< t/k_\alpha^*\})\\
=&\pi_{(h_0,\vec z_{-j})}(\{\vec x\in\ms^t:\text{Sum}(\vec x_{-j})+\ratio^{-1}_{\Lambda,h_1}(x_j)< t/k_\alpha^*\})\\
=&\int_{0}^{t/k_\alpha^*}\sum\nolimits_{\vec x_{-j}\in\ms^{t-1}:\text{Sum}(\vec x_{-j})=p}\\
&\sum\nolimits_{x_j:\ratio^{-1}_{\Lambda,h_1}(x_j)<t/k_\alpha^*-p}\pi_{(h_0,\vec z_{-j})}(\vec x) dp\\
=&\int_{0}^{t/k_\alpha^*}\pi_{\vec z_{-j}}(\{\vec x_{-j}\in\ms^{t-1}:\text{Sum}(\vec x_{-j})=p\})\\
&\cdot\pi_{h_0}(\{x_j:\ratio^{-1}_{\Lambda,h_1}(x_j)<t/k_\alpha^*-p\}) dp\\
=&\int_{0}^{t/k_\alpha^*}Q(\vec z_{-j},p)\cdot\pi_{h_0}(\{x_j:\ratio^{-1}_{\Lambda,h_1}(x_j)<t/k_\alpha^*-p\}) dp\\
\end{align*}
where $Q(\vec z_{-j},p)=\pi_{\vec z_{-j}}(\{\vec x_{-j}\in\ms^{t-1}:\text{Sum}(\vec x_{-j})=p\})$.
Given $p$ and $\gamma_\alpha^*$, let $\alpha'$ denote the size of the likelihood ratio test $\lr_{\alpha',\Lambda,h_1}$, where the threshold $k_{\alpha'}$ is $1/(t/k_\alpha^*-p)$ and $\gamma_{\alpha'}=\gamma_\alpha^*$. We have

\begin{equation}\label{eq:sizeint}
\size(\lr_\alpha,(h_0,\vec z_{-j}))
=\int_{0}^{t/k_\alpha^*}Q(\vec z_{-j},p)\cdot\size(\lr_{\alpha',\Lambda,h_1},h_0)dp
\end{equation}
We note that in Equation (\ref{eq:sizeint}), $\alpha'$ is a function of $t$, $p$, $k_{\alpha}^*$, and $\gamma_{\alpha}^*$. Because $\Lambda$ is a uniformly least favorable distribution, it follows from Lemma~\ref{lem:hvs} that for any $h_0^*\in\supp(\Lambda)$ and any $h_0\in (H_0-\supp(\Lambda))$, we have
$$\size(\lr_{\alpha',\Lambda,h_1},h_0)\le \alpha'\le \size(\lr_{\alpha',\Lambda,h_1},h_0^*)$$

Then by Equation~(\ref{eq:sizeint}), for any $h_0\in (H_0- \supp(\Lambda))$ and any $h_0^*\in \supp(\Lambda)$, we have
\begin{align*}
&\size(\lr_\alpha,(h_0,\vec z_{-j}))\\
=&\int_{0}^{t/k_\alpha^*}Q(\vec z_{-j},p)\cdot\size(\lr_{\alpha',\Lambda,h_1},h_0)dp\\
\le& \int_{0}^{t/k_\alpha^*}Q(\vec z_{-j},p)\cdot\size(\lr_{\alpha',\Lambda,h_1},h_0^*)dp\\
=&\size(\lr_\alpha,(h_0^*,\vec z_{-j}))
\end{align*}

To prove the last inequality in the lemma, we prove a claim that holds for any least favorable distribution and the corresponding likelihood ratio test. The $\size(\cdot)$ function in the claim is extended to $h_1\in H_1$ in the natural way.

\begin{claim}\label{claim:sizeh1} For any model, any composite vs.~simple test ($H_0$ vs.~$h_1$), suppose $\Lambda$ is a level-$\eta$ least favorable distribution. Then we have 
$\size(\lr_{\eta},h_1)\ge \eta=\size(\lr_{\eta},h_0^{\Lambda})$. \footnote{We recall that $h_0^\Lambda$ is the combined $H_0$ by $\Lambda$.}
\end{claim}
\begin{proof} For the sake of contradiction suppose this is not true, that is, for any $h_0^*\in\supp(\Lambda)$ we have $\size(\lr_{\eta},h_1)< \eta=\size(\lr_{\eta},h_0^*)$. It follows that $k_{\eta}\le1$, otherwise we have
 
\begin{align*}
&\size(\lr_\eta,h_1)\\
=&\sum_{P\in\ms:\ratio(P)>k_\eta}\pi_{h_1}(P)+\gamma_\eta\sum_{P\in\ms:\ratio(P)=k_\eta}\pi_{h_1}(P)\\
\ge&\sum_{P\in\ms:\ratio(P)>k_\eta}\pi_{\Lambda}(P)
\cdot k_\eta+\gamma_\eta\sum_{P\in\ms:\ratio(P)=k_\eta}\pi_{\Lambda}(P)
\cdot k_\eta\\
>&\sum_{P\in\ms:\ratio(P)>k_\eta}\pi_{\Lambda}(P)+\gamma_\eta\sum_{P\in\ms:\ratio(P)=k_\eta}\pi_{\Lambda}(P)
=\eta,
\end{align*} 
which is a contradiction. Therefore, we have 

\begin{align*}1\\
=&\size(\lr_\eta,h_1)+\sum\nolimits_{P\in \ms:\ratio(P)<k_\eta}\pi_{h_1}(P)\\
&+(1-\gamma_\eta)\sum\nolimits_{P\in \ms:\ratio(P)=k_\eta}\pi_{h_1}(P)\\
<&\eta+\sum\nolimits_{P\in \ms:\ratio(P)<k_\eta}\pi_{\Lambda}(P)\cdot k_\eta\\
&+(1-\gamma_\eta)\sum\nolimits_{P\in \ms:\ratio(P)=k_\eta}\pi_{\Lambda}(P)\cdot k_\eta\\
\le &\eta+k_\eta(1-\size(\lr_{\eta},h_0^\Lambda))\le 1,
\end{align*}
which is a contradiction.
\end{proof}

Applying Claim~\ref{claim:sizeh1} to $\lr_{\alpha',\Lambda,h_1}$, we have
\begin{align*}
&\size(\lr_\alpha,(h_0^*,\vec z_{-j}))\\
=&\int_{0}^{t/k_\alpha^*}Q(\vec z_{-j},p)\cdot\size(\lr_{\alpha',\Lambda,h_1},h_0^*)dp\\
\le& \int_{0}^{t/k_\alpha^*}Q(\vec z_{-j},p)\cdot\size(\lr_{\alpha',\Lambda,h_1},h_1)dp\\
=&\size(\lr_\alpha,(h_1,\vec z_{-j}))
\end{align*}

This finishes the proof of Lemma~\ref{lem:change}.
\end{proof}

It follows from Lemma~\ref{lem:change} that for any $j\le t$  and any $h_0^*\in \supp(\Lambda)$, we have that $\size(\lr_\alpha,(h_0^*,[\vec h_1]_{-j}))$ is the same. Due to symmetry, for any $\vec h_0^*\in H_0^*$, $\size(\lr_\alpha,h_0^*)$ is the same and is therefore equivalent to $\alpha$. This verifies condition (i) in Lemma~\ref{lem:hvs}. 

Condition (ii) in Lemma~\ref{lem:hvs} is verified by recursively applying Lemma~\ref{lem:change}. Given any $\vec h_0\in H_0^*- \supp(\Lambda^*)$, there must exist $j\le t$ such that $[\vec h_0]_j\ne h_1$. We then change $[\vec h_0]_j$ to an arbitrary $h_0^*\in \supp(\Lambda)$, then change the other components of $\vec h_0$ to $h_1$ one by one. Each time we make the change the size of $\lr_\alpha$ does not decrease according to Lemma~\ref{lem:change}. At the end of the process we obtain $(h_0^*,[\vec h_1]_j)\in\supp(\Lambda^*)$, at which the size of $\lr_\alpha$ is $\alpha$. The theorem follows after applying Lemma~\ref{lem:hvs}. 
\end{proof}

We now define a test $\bar f_{\alpha,a}$ for $H_0=(\ml(\ma)- H_1)$ vs.~$H_1=\atopr$ and prove that if a UMP test exists, then $\bar f_{\alpha,a}$ must also be a UMP test. For any $V\in \ml(\ma)$ and any alternative $a\in\ma$, we let $\borda_a(V)$ denote the Borda score of $a$ in $V$. That is, $\borda_a(V)$ is the number of alternatives that are ranked below $a$ in $V$. For any $V\in\ml(\ma)$, we let $\bar f_{\alpha,a}(V)=\left\{\begin{array}{ll}1&\text{if }\borda_a(V)>K_\alpha\\
0&\text{if }\borda_a(V)<K_\alpha\\ 
\Gamma_\alpha&\text{if }\borda_a(V)=K_\alpha
\end{array}\right.,$
where $K_\alpha$ and $\Gamma_\alpha$ are chosen so that the size of $\bar f_{\alpha,a}$ is $\alpha$. In other words, $\bar f_{\alpha,a}$ calculates the Borda score of $a$ in the input profile, and if it is larger than a threshold $K_\alpha$ then $H_0$ is rejected. It is not hard to see that $\bar f_{\alpha,a}$ equals to $f_{\alpha',a}$ with a possibly different level $\alpha'$ (defined in Theorem~\ref{thm:winnerbottomtopm}).

\begin{lem}\label{lem:bordatest} If there exists a level-$\alpha$ UMP test for $H_0=(\ml(\ma)- H_1)$ vs.~$H_1=\atopr$, then $\bar f_{\alpha,a}$ is also a level-$\alpha$ UMP test.
\end{lem}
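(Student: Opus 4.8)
The plan is to collapse the test to its natural one-dimensional statistic $\borda_a$ by a symmetry argument, and then recognise the Borda threshold test as a Neyman--Pearson test for a reduced simple-vs-simple problem. \emph{First}, I would symmetrise. Let $G$ be the group of permutations of $\ma$ fixing $a$. Mallows' model is $G$-equivariant, $H_0=\ml(\ma)\setminus H_1$ and $H_1=\atopr$ are $G$-invariant, and $G$ acts transitively on $H_1$. For $g\in G$ and a test $f$, setting $f^g(V)=f(g^{-1}V)$ gives $\size(f^g,h_0)=\size(f,g^{-1}h_0)$ and $\power(f^g,h_1)=\power(f,g^{-1}h_1)$; hence if any level-$\alpha$ UMP test $f$ exists then $\power(f,\cdot)$ is a constant $\beta$ on $H_1$, and the average $\tilde f=\frac1{|G|}\sum_{g\in G} f^g$ is again a level-$\alpha$ UMP test. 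Since the $G$-orbits on $\ml(\ma)$ are exactly the level sets of $\borda_a$, this yields a UMP test of the form $\tilde f(V)=\phi(\borda_a(V))$ for some $\phi:\{0,\dots,m-1\}\to[0,1]$.

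\emph{Second}, I would analyse the reduced problem. For a ranking $W$ with $a$ in position $i$ (top $=1$), the law $q^{(i)}$ of $\borda_a$ under $\pi_W$ depends only on $i$, with $q^{(1)}$ attached to $H_1$ and $q^{(2)},\dots,q^{(m)}$ to $H_0$; writing $\langle\psi,q\rangle=\sum_t\psi(t)q(t)$, a test $f_\psi(V)=\psi(\borda_a(V))$ has $\power(f_\psi,h_1)=\langle\psi,q^{(1)}\rangle$ for all $h_1\in H_1$ and $\size(f_\psi)=\max_{2\le i\le m}\langle\psi,q^{(i)}\rangle$. Two ingredients then close the argument. (a) Applying Lemma~\ref{lem:bordadom} to the permutation swapping $a$ with the alternative directly below it gives the stochastic dominance $q^{(1)}\succeq q^{(2)}\succeq\cdots\succeq q^{(m)}$, so for a non-decreasing $\psi$ the worst case over $H_0$ is $q^{(2)}$, i.e.\ $\size(f_\psi)=\langle\psi,q^{(2)}\rangle$; in particular $\bar f_{\alpha,a}=f_{\phi^*}$ where $\phi^*$ is the threshold-in-$\borda_a$ function with $\langle\phi^*,q^{(2)}\rangle=\alpha$. (b) The likelihood ratio $q^{(1)}(t)/q^{(2)}(t)$ is non-decreasing in $t$, so by Lemma~\ref{lem:NP} $\phi^*$ is a most powerful level-$\alpha$ test for the simple-vs-simple problem ``$q^{(2)}$ vs $q^{(1)}$'', whence $\langle\phi^*,q^{(1)}\rangle\ge\langle\psi,q^{(1)}\rangle$ for every $\psi$ with $\langle\psi,q^{(2)}\rangle\le\alpha$, in particular for $\psi=\phi$. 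Combining $\langle\phi,q^{(1)}\rangle=\beta$ with the fact that $f_{\phi^*}$ is level-$\alpha$ (so $\langle\phi^*,q^{(1)}\rangle\le\beta$) gives $\langle\phi^*,q^{(1)}\rangle=\beta$: $\bar f_{\alpha,a}$ attains the maximal power at every $h_1\in H_1$, hence is a level-$\alpha$ UMP test.

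\emph{The hard part} is ingredient (b), the monotone likelihood ratio, which I expect to be the main obstacle; I would prove it by a coupling in the spirit of Lemma~\ref{lem:CC'}. Fix $h_1^0=[a\succ b\succ R]\in H_1$ and its swap $h_0^*=[b\succ a\succ R]$; as these differ only on the pair $\{a,b\}$, $\pi_{h_1^0}(V)/\pi_{h_0^*}(V)$ equals $1/\varphi$ when $a\succ_V b$ and $\varphi$ when $b\succ_V a$. Writing $A_t=\pi_{h_0^*}(\borda_a=t,\,a\succ b)$ and $B_t=\pi_{h_0^*}(\borda_a=t,\,b\succ a)$, one gets $q^{(2)}(t)=A_t+B_t$ and $q^{(1)}(t)=\varphi^{-1}A_t+\varphi B_t$, so the ratio is increasing in $\rho_t:=A_t/(A_t+B_t)$ and it suffices to show $A_{t+1}B_t\ge A_tB_{t+1}$. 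I would use the ``raise $a$ by one position'' bijection $U:\{\borda_a=t\}\to\{\borda_a=t+1\}$ that swaps $a$ with the alternative immediately above it, splitting each fibre according to whether that alternative is $b$ (weight $\times\varphi$, turning ``$b$ immediately above $a$'' into ``$a$ immediately above $b$'') or not (weight $\times\varphi^{-1}$, relative order of $a,b$ unchanged); this gives $A_{t+1}^{\mathrm{adj}}=\varphi B_t^{\mathrm{adj}}$, $A_{t+1}^{\mathrm{non}}=\varphi^{-1}A_t$, $B_{t+1}=\varphi^{-1}(B_t-B_t^{\mathrm{adj}})$, where $B_t^{\mathrm{adj}}$ is the $\pi_{h_0^*}$-mass of rankings with $b$ immediately above $a$. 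Substituting yields the identity $A_{t+1}B_t-A_tB_{t+1}=B_t^{\mathrm{adj}}(\varphi B_t+\varphi^{-1}A_t)\ge 0$, which finishes the lemma. Keeping the orbit-and-adjacency bookkeeping straight in this last step is where the real care lies, and it is of the same flavour as Lemmas~\ref{lem:CC'} and~\ref{lem:bordadom}.
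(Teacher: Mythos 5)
Your proof is correct, and its first half (symmetrize over the permutations fixing $a$, conclude that some UMP test is a function of $\borda_a$ with constant power on $H_1$) is exactly the paper's opening move. The second half, however, takes a genuinely different route. The paper never reduces to a one-dimensional problem: it shows directly that the Borda level sets $T_s=\{V:\borda_a(V)=s\}$ are strictly ordered by cost-effectiveness, i.e.\ $\pi_{h_1}(T_{s_1})/\pi_{h_0}(T_{s_1})>\pi_{h_1}(T_{s_2})/\pi_{h_0}(T_{s_2})$ for all $s_1>s_2$, every $h_0\in H_0$ and $h_1\in H_1$, via the triangle inequality for the Kendall--tau distance applied to the ``move $a$ down'' bijection; it then argues that a symmetrized UMP test not of threshold form could be strictly improved by shifting mass from a lower level set to a higher one, so the symmetrized test must literally equal $\bar f_{\alpha,a}$. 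You instead pass to the induced laws $q^{(i)}$ of $\borda_a$, use Lemma~\ref{lem:bordadom} to identify $q^{(2)}$ as the binding null, prove a monotone likelihood ratio for $q^{(1)}$ versus $q^{(2)}$ by an explicit mass-transport identity ($A_{t+1}B_t-A_tB_{t+1}=B_t^{\mathrm{adj}}(\varphi B_t+\varphi^{-1}A_t)\ge 0$, which checks out), and finish by Neyman--Pearson on the reduced simple-vs-simple problem. Your route is closer to the classical Karlin--Rubin template and only needs the threshold test to weakly dominate the averaged UMP test, whereas the paper's route yields the stronger structural conclusion that the symmetrized UMP test \emph{is} the threshold test; your MLR computation is also finer-grained than the paper's aggregate block-ratio inequality, since it works per Borda level and conditions on the position of $b$. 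The one point to spell out in step (b) is standard: when $q^{(1)}(t)/q^{(2)}(t)$ is merely non-decreasing, the threshold-in-$t$ test of size $\alpha$ agrees with the Neyman--Pearson test off the tie set $\{t: q^{(1)}(t)/q^{(2)}(t)=k_\alpha\}$ and assigns that set the same $q^{(2)}$-mass, hence has the same power; this takes one line but should be said.
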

\begin{proof} Let $f_\alpha$ denote a level-$\alpha$ UMP test. For any permutation $M$ over $\ma-\{a\}$, we let $M(f_{\alpha})$ denote the test such that for any $V\in\ml(\ma)$, $M(f_{\alpha})(V)=f_\alpha(M(V))$. Because the Kendall-Tau distance is invariant to permutations, we have that for any $h_0\in H_0$, $\size(f_{\alpha},h_0)= \size(M(f_{\alpha}),M(h_0))$, and for any $h_1\in H_1$, $\power(f_{\alpha},h_1)=\power(M(f_{\alpha}),M(h_1))$. Therefore $\size(M(f_\alpha))=\alpha$. Also because the multi-set of $\{\power(f_\alpha,h_1):h_1\in H_1\}$ is the same as the multi-set $\{\power(M(f_\alpha),h_1):h_1\in H_1\}$, for all $h_1\in H_1$, we must have $\power(f_\alpha,h_1)=\power(M(f_\alpha),h_1)$, otherwise there exists $h_1\in H_1$ such that $\power(f_\alpha,h_1)<\power(M(f_\alpha),h_1)$, which contradicts the assumption that $f_\alpha$ is UMP.

It follows that for any permutation $M$ over $\ma-\{a\}$, $M(f_\alpha)$ is also UMP. Therefore, $\bar f_\alpha=\frac{1}{(m-1)!}\sum_{M} M(f_\alpha)$ is also UMP. We note that for any $V,V'$ where $a$ has the same Borda score, there exists a permutation $M$ over $\ma-\{a\}$ so that $M(V)=V'$. This means that $\bar f_\alpha(V)=\bar f_\alpha(V')$. 

We now prove that $\bar f_\alpha$ must be $\bar f_{\alpha,a}$ as in the statement of the Lemma. More precisely, we will prove that for any $V,V'$ such that $\borda_a(V)>\borda_a(V')$, if $\bar f_\alpha(V')>0$ then $\bar f_\alpha(V)=1$. Suppose for the sake of contradiction that this is not true, and there exist $V,V'$ such that $s_1=\borda_a(V)>\borda_a(V')=s_2$, $\bar f_\alpha(V')>0$, and $\bar f_\alpha(V)<1$. For any $s\le m-1$, we let $T_{s}$ denote the set of rankings where the Borda score of $a$ is $s$. That is, $T_s=\{V\in\ml(\ma):\borda_a(V)=s\}$. We will prove that for any $s_1>s_2$, $T_{s_1}$ as a whole is more ``cost effective'' than $T_{s_2}$ as a whole for any $h_0\in H_0$ against any $h_1\in H_1$. More precisely, we will prove that $\ratio_{h_0,h_1}(T_{s_1})>\ratio_{h_0,h_1}(T_{s_2})$.

For any $s\le m-2$ and any $h_0\in T_s$, let $h_1$ denote the ranking in $T_{m-1}=H_1$ that is obtained from $\theta$ by raising $a$ to the top position. For any $V_{s_1}\in T_{s_1}$, we let $\text{Down}_{a}^{s_1-s_2}(V_{s_1})\in T_{s_2}$ denote the ranking that is obtained from $V_{s_1}$ by moving $a$ down for $s_1-s_2$ positions, that is, from the $(m-s_1)$-th position to the $(m-s_2)$-th position. We have
\begin{align*}
&\frac{ \pi_{h_0}(T_{s_2})}{\pi_{h_0}(T_{s_1})}\\
= &\frac{\sum_{V\in T_{s_2}}\pi_{h_0}(V)}{\sum_{V\in T_{s_1}}\pi_{h_0}(V)}
=\frac{\sum_{V\in T_{s_1}}\pi_{h_0}(\text{Down}^{s_1-s_2}_a(V))}{\sum_{V\in T_{s_1}}\pi_{h_0}(V)}\\
=&\frac{\sum_{V\in T_{s_1}}\varphi^{\kt(h_0,\text{Down}^{s_1-s_2}_a(V))}}{\sum_{V\in T_{s_1}}\varphi^{\kt(h_0,V)}}\\
> &\frac{\sum_{V\in T_{s_1}}\varphi^{\kt(h_0,V)}\cdot \varphi^{\kt(V,\text{Down}^{s_1-s_2}_a(V))}}{\sum_{V\in T_{s_1}}\varphi^{\kt(h_0,V)}}\\
=&\varphi^{s_1-s_2}=\frac{ \pi_{h_1}(T_{s_2})}{\pi_{h_1}(T_{s_1})}
\end{align*}
The inequality is due to triangle inequality for Kendall-Tau distance. It is strict because for any $V\in T_{s_1}$ where the top-ranked alternative in $h_0$ is ranked between the $(m-s_1)$-th and $(m-s_2)$-th position, $\kt(h_0,\text{Down}^{s_1-s_2}_a(V))<\kt(h_0,V)+\kt (V,\text{Down}^{s_1-s_2}_a(V))$. Therefore, $\frac{ \pi_{h_0}(T_{s_2})}{\pi_{h_0}(T_{s_1})}> \frac{\pi_{h_1}(T_{s_2})}{\pi_{h_1}(T_{s_1})}$, which means that $\ratio_{h_0,h_1}(T_{s_1})=\frac{\pi_{h_1}(T_{s_1})}{\pi_{h_0}(T_{s_1})}> \frac{\pi_{h_1}(T_{s_2})}{ \pi_{h_0}(T_{s_2})}=\ratio_{h_0,h_1}(T_{s_2})$.

Therefore, we can find sufficiently small $\epsilon,\delta>0$, and replace $\epsilon T_{s_2}$ by $\delta T_{s_1}$ without changing the size. This will increase the power of $\bar f_\alpha$ because $T_{s_1}$ is strictly more cost effective than $T_{s_2}$, which contradicts the assumption that $\bar f_\alpha$ is a UMP test. Therefore, $\bar f_\alpha =\bar f_{\alpha,a}$, which proves the lemma.  
\end{proof}

\Omit{
{\bf Theorem~\ref{thm:winnerotherstopc}.} Let $\condorcet$ denote a Condorcet's model with any $m\ge 2$, any $n\ge 2$, and any $\varphi$.
For any $\alpha$, $g_{\alpha,a}$ defined below is a level-$\alpha$ UMP test for $H_0=(\mb(\ma)- H_1)$ vs.~$H_1=\atopb$.  For any $P_n$,  
$$g_{\alpha,a}(P_n)=\left\{\begin{array}{cc}1&\text{if }\ratio(P_n)>k_\alpha\\
0&\text{if }\ratio(P_n)<k_\alpha\\
\gamma_\alpha&\text{if }\ratio(P_n)=k_\alpha
\end{array}\right.,$$
where $\ratio(P_n)=\dfrac{m-1}{\sum_{b\ne a}\varphi^{w_{P_n}(a\succ b)}}$, and $k_\alpha$ and $\gamma_\alpha$ are chosen such that the level of $g_{\alpha,a}$ is $\alpha$.

\begin{proof} Let $\mm_1$ denote the Condorcet's model with a single sample (ranking). Let $X_1,\ldots,X_{m-1}$ denote the $m-1$ pairwise comparisons between $a$ and other alternatives. Similarly to the proof of Theorem~\ref{thm:winnercvsc}, we let $\mm_{X_1},\ldots,\mm_{X_{m-1}}$ denote the restriction of $\mm_1$ on the $m-1$ pairwise comparisons, and let $\mm_Y$ denote the restriction of $\condorcet$ on other pairwise comparisons. In fact, $\mm_{X_1},\ldots\mm_{X_{m-1}}$ are the same model. It follows that $\condorcet=\mm_{X_1}\otimes\mm_{X_2}\otimes\cdots \mm_{X_{m-1}}\otimes\mm_Y$.

In $\mm_{X_1}$, let $1$ denote $a$ is more preferred in the pairwise comparison. Due to the Neyman-Pearson lemma (Lemma~\ref{lem:NP}), the deterministic distribution $\Lambda=\{0\}$ is a uniformly least favorable distribution for $H_0=\{0\}$ vs.~$h_1=1$. For any $n\in\mathbb N$, let $\mm_{X_{1,n}}$ denote $\mm_{X_1}$ with $n$ i.i.d.~samples. It follows from Lemma~\ref{lem:ext} that $\Lambda$ is still a uniformly least favorable distribution for $\mm_{X_{1,n}}$. By Lemma~\ref{lem:extdomain}, $\ext(\Lambda,h_1,m-1)$ is a uniformly least favorable distribution for $\ext(H_0,h_1,m-1)=(\{0,1\}^{m-1}-\{\vec 1\})$ vs.~$h_1=\vec 1$ under $\mm_{X_{1,n}}\otimes\cdots\otimes \mm_{X_{m-1,n}}$. 

Let $\mm_{Y,n}$ denote the model obtained from $\mm_Y$ by using $n$ i.i.d.~samples. For any $y_1\in\Theta_{Y,n}$, let $\Lambda_{y_1}$ denote the distribution that is obtained from $\ext(\Lambda,h_1,m-1)$ by appending $y_1$ to each parameter. By Lemma~\ref{lem:ext}, $\Lambda_{y_1}$  is a uniformly least favorable distribution for $\ext(H_0,h_1,m-1)\times \Theta_{Y,n}$ vs.~$(\vec 1,y_1)$ under $\mm_{X_{1,n}}\otimes\cdots\otimes \mm_{X_{m-1,n}}\otimes \mm_{Y,n}$, which is the Condorcet's model with $n$ i.i.d.~samples. We note that $\ext(H_0,h_1,m-1)\times \Theta_{Y,n}=(\{0,1\}^{m-1}-\{\vec 1\})\times \Theta_Y=(\mb(\ma)-\atopb$). This means that the likelihood ratio test $\lr_{\alpha,\Lambda_{y_1},(\vec 1,y_1)}$ is a most powerful level-$\alpha$ test for $(\mb(\ma)-\atopb)$ vs.~$(\vec 1,y_1)$. We note that for all $y_1$, $\lr_{\alpha,\Lambda_{y_1},(\vec 1,y_1)}$ is the same test, which means that it is also UMP. It can be verified that $g_{\alpha,a}=\lr_{\alpha,\Lambda_{y_1},(\vec 1,y_1)}$.
\end{proof}
}

\end{document}